\newtheorem{lemma}{Lemma}[section]
\newtheorem{proposition}{Proposition}[section]
\newtheorem{theorem}{Theorem}[section]
\def\CE{\mathcal{E}}
\def\CM{\mathcal{X}}
\def\CN{\mathcal{Y}}
\def\CT{\mathcal{T}}
\def\E{K}
\def\G{\Gamma}
\def\Hcd{H_0^2(\O)}
\def\LO{L^2(\O)}
\def\N{\mathbb{N}}
\def\O{\Omega}
\def\P{\mathbb{P}}
\def\R{\mathbb{R}}
\def\dim{\mathop{\mathrm{\,dim}}\nolimits}
\def\l{\lambda}
\def\PiK{\Pi_{\E}^{\Delta}}
\def\sp{\mathop{\mathrm{sp}}\nolimits}
\def\VK{V^{\E}_h}
\def\HdO{{H^2(\O)}}
\def\qen{\qquad\text{in }} 
\def\qqpt{\qquad\forall}
\def\qso{\qquad\text{on }} 
\def\o{\omega}
\def\dn{\partial_n}
\def\HdoO{{H_0^2(\O)}}
\def\HdsO{{H^{2+s}(\O)}}
\def\HdoK{{H^{2}(\E)}}
\def\WK{W^{\E}_h}
\def\Wh{W_h}
\def\PioK{\Pi_{\E}^{0}}
\begin{document}
\begin{frontmatter}

\title{A virtual element method for
the vibration problem \\
of Kirchhoff plates}

\author[1,2]{David Mora}
\ead{dmora@ubiobio.cl}
\address[1]{GIMNAP, Departamento de Matem\'atica,
Universidad del B\'io-B\'io, Casilla 5-C, Concepci\'on, Chile.}
\author[2,3]{Gonzalo Rivera}
\ead{grivera@ing-mat.udec.cl}
\address[2]{Centro de Investigaci\'on en Ingenier\'ia Matem\'atica
(CI$^2$MA), Universidad de Concepci\'on, Concepci\'on, Chile.}
\author[2,3]{Iv\'an Vel\'asquez}
\ead{ivelasquez@ing-mat.udec.cl}
\address[3]{Departamento de Ingenier\'ia Matem\'atica,
Universidad de Concepci\'on, Concepci\'on, Chile.}

\begin{abstract} 
The aim of this paper is to develop a virtual element method (VEM) for the
vibration problem of thin plates on polygonal meshes.
We consider a variational formulation relying only on the
transverse displacement of the plate and propose
an $H^2(\Omega)$ conforming discretization by means of the VEM
which is simple in terms of degrees of freedom and coding aspects.
Under standard assumptions on the computational
domain, we establish that the resulting scheme
provides a correct approximation of the spectrum
and prove optimal order error estimates for the
eigenfunctions and a double order for the eigenvalues.
The analysis restricts to simply connected polygonal
clamped plates, not necessarily convex.
Finally, we report several numerical experiments
illustrating the behaviour of the proposed scheme
and confirming our theoretical results on different families of meshes.
Additional examples of cases not covered by our theory are also presented.
\end{abstract}

\begin{keyword} 
Virtual element method 
\sep Kirchhoff plates
\sep spectral problem 
\sep error estimates
\MSC 65N25 \sep 65N30 \sep 74K20.
\end{keyword}

\end{frontmatter}


\setcounter{equation}{0}
\section{Introduction}
\label{SEC:INTR}

The {\it Virtual Element Method} (VEM), introduced in \cite{BBCMMR2013,BBMR2014},
is a recent generalization of the Finite Element Method
which is characterized by the capability of dealing with
very general polygonal/polyhedral meshes.
The interest in numerical methods that can make use of general
polytopal meshes has recently undergone a significant growth
in the mathematical and engineering literature; among
the large number of papers on this subject, we cite as a minimal sample
\cite{BBCMMR2013,CGH14,DPECMAME2015,DPECRAS2015,ST04,TPPM10}.

Indeed, polytopal meshes can be very useful for
a wide range of reasons, including meshing of
the domain (such as cracks) and data (such as inclusions) features,
automatic use of hanging nodes, use of moving meshes, adaptivity.
Moreover, the VEM presents the advantage to easily implement
highly regular discrete spaces.
Indeed, by avoiding the explicit construction of the local basis
functions, the VEM can easily handle general polygons/polyhedrons
without complex integrations on the element
(see \cite{BBMR2014} for details on the coding aspects of the method).
The Virtual Element Method has recently been applied successfully 
to a wide range of problems, see for instance
\cite{AABMR13,ABMVsinum14,BBCMMR2013,BBMR2014,BLM2015,BMRR,BBPS2014,BM12,CG2016,CMS2016,CL,Paulino-VEM,GVXX,MRR2015,PG2015,PPR15,vacca1,vacca2,WRR}.

The numerical approximation of eigenvalue problems for partial
differential equations derived from engineering applications, is object
of great interest from both, the practical and theoretical points of
view. We refer to \cite{Boffi,BGG2012} and the references therein for
the state of the art in this subject area. In particular, this paper
focus on the so called thin plate vibration problem, which involves the
biharmonic operator.
Among the existing techniques to solve this problem, various finite
element methods have been introduced and analyzed.
In particular, we mention nonconforming methods
and different mixed formulations for the Kirchhoff model,
see for instance \cite{BO,Ca,MORR,MoRo2009,Ra}.
More recently, in \cite{BMS2015} a discontinuous Galerkin method
has been proposed and analyzed for the vibration
and buckling problems of thin plates.
On the other hand, the construction of high regularity conforming
finite elements for $H^2(\O)$ is difficult in general,
since them usually involve a large number of degrees of freedom
(see \cite{ciarlet}).

Recently, thanks to the flexibility of VEM, it has been
presented in \cite{BM13,BM12} that virtual elements
can be used to build global discrete spaces of arbitrary
regularity that are simple in terms of degrees of freedom
and coding aspects (see also \cite{ABSVsinum16,BMR16}).
Thus, in the present contribution,
we follow a similar approach in order to solve
an eigenvalue problem modelling 
the two-dimensional plate vibration problem
considering a conforming $C^1$ discrete formulation.



The aim of this paper is to introduce and analyze a $C^1$-VEM 
which applies to general polygonal meshes, made by possibly
non-convex elements, for the two-dimensional plate vibration problem.
We begin with a variational formulation of the spectral problem relying only
on the transverse displacement of the plate. Then,
we exploit the capability of VEM to built highly regular discrete spaces
and propose a conforming $H^2(\O)$ discrete formulation.
The resulting discrete bilinear form is continuous and elliptic.
This method makes use of a very simple set of degrees of freedom,
namely 3 degrees of freedom per vertex of the mesh.
By using the abstract spectral approximation
theory (see \cite{DNR1,DNR2}), under rather mild assumptions on the polygonal
meshes, we establish that the resulting scheme provides a correct
approximation of the spectrum and prove optimal order error estimates
for the eigenfunctions and a double order for the eigenvalues.
We remark that the present method is new on triangular meshes,
and in this case the computational cost is almost $3N_v$, where
$N_v$ denotes the number of vertices, thus providing
a very competitive alternative in comparison to other classical techniques
based on finite elements.

The outline of this article is as follows: We introduce in
Section~\ref{SEC:STAT} the variational formulation of the vibration
eigenvalue problem, define a solution operator and establish its
spectral characterization. In Section~\ref{SEC:DISCRETE}, we introduce
the virtual element discrete formulation, describe the spectrum of a
discrete solution operator and prove some auxiliary results.
In Section~\ref{SEC:convergence}, we prove
that the numerical scheme provides a correct spectral approximation and
establish optimal order error estimates for the eigenvalues and
eigenfunctions. Several numerical tests that
allow us to assess the convergence properties of the method, to confirm
that it is not polluted with spurious modes and to check whether the
experimental rates of convergence agree with the theoretical ones
are reported in Section~\ref{SEC:NUMER}.
Finally, we summarize some conclusions in Section~\ref{SEC:Conclusion}.

Throughout the article we will use standard notations for Sobolev
spaces, norms and seminorms. Moreover, we will denote by $C$ a generic
constant independent of the mesh parameter $h$, which may take different
values in different occurrences.

Finally, given a linear bounded operator $T:\,X\to X$,
defined on a Hilbert space $X$, we denote its spectrum by 
$\sp(T):=\left\{z\in\mathbb{C}:\,\left(zI-T\right)\mbox{ is not
invertible}\right\}$ and by $\rho(T):=\mathbb{C}\setminus\sp(T)$ the resolvent
set of $T$. Moreover, for any $z\in\rho(T)$,
$R_z(T):=\left(zI-T\right)^{-1}:\,X\to X$ denotes the resolvent operator
of $T$ corresponding to $z$. 


\setcounter{equation}{0}
\section{The spectral problem}
\label{SEC:STAT}

Let $\O\subset\R^2$ be a polygonal bounded simply-connected domain
occupied by the mean surface of a plate, clamped on its whole boundary
$\G$. The plate is assumed to be homogeneous, isotropic, linearly
elastic, and sufficiently thin as to be modeled by Kirchhoff-Love
equations. We denote by $w$ the transverse displacement of the mean
surface of the plate. 

The plate vibration problem reads as follows:

Find $(\l,w)\in\R\times\HdO$, $w\ne0$, such that
\begin{equation}
\label{vibra}
\left\{\begin{array}{ll}
\Delta^2w=\l w&\qen\O,
\\
w=\dn w=0&\qso\G,
\end{array}\right.
\end{equation}
where $\l=\o^2$, with $\o>0$ being the vibration frequency, and $\dn$
denotes the normal derivative. To simplify the notation we have taken
the Young modulus and the density of the plate, both equal to $1$.

To obtain a weak formulation of the spectral problem~\eqref{vibra},
we multiply the corresponding equation by $v\in\HdoO$ and integrate
twice by parts in $\O$. Thus, we obtain:

Find $(\l,w)\in\R\times\HdoO$, $w\ne0$, such that
\begin{equation}
\label{vibrapri}
a(w,v)
=\l b(w,v)
\qqpt v\in\HdoO,
\end{equation}
in \eqref{vibrapri} the bilinear forms are defined
for any $w,v\in\HdoO$ by
\begin{align*}
a(w,v)
& :=\int_{\O}D^2 w:\,D^2 v,
\\
b(w,v)
& :=\int_{\O}wv,
\end{align*}
with $":"$ denotes the usual scalar product of $2\times2$-matrices,
$D^2 v:=(\partial_{ij}v)_{1\le i,j\le2}$ denotes the Hessian
matrix of $v$.
We note that those are bounded bilinear symmetric forms.
Moreover, it is immediate to prove that the eigenvalues
of the problem above are real and positive.

Next, we define the solution operator associated with
the variational eigenvalue problem~\eqref{vibrapri}:
\begin{align*}
T:\ \HdoO & \longrightarrow \HdoO,
\\
f & \longmapsto Tf:=u,
\end{align*}
where $u\in\HdoO$ is the solution of the corresponding source problem:
\begin{equation}
\label{T1}
a(u,v)=b(f,v)
\qquad\forall v\in\HdoO.
\end{equation}

The following lemma allows us to establish the well-posedness of this
source problem.

\begin{lemma}
\label{ha-elipt}
There exists a constant $\alpha>0$, depending on $\O$, such that
$$
a(v,v)
\ge\alpha\left\|v\right\|_{2,\O}^2
\qquad\forall v\in\HdoO.
$$
\end{lemma}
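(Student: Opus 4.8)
The plan is to establish this $H^2$-coercivity estimate for the biharmonic form on $H_0^2(\Omega)$ by reducing everything to the standard Poincaré–Friedrichs inequality. First I would observe that for $v \in H_0^2(\Omega)$, the trace of $v$ and its normal derivative vanish on $\Gamma$; in particular, integrating by parts shows that the full $L^2$-norm of the Hessian $\|D^2 v\|_{0,\Omega}^2$ equals $|v|_{2,\Omega}^2 = \int_\Omega |\Delta v|^2$ when $v$ is smooth, and by density this identity passes to all of $H_0^2(\Omega)$. (Alternatively, one keeps $\|D^2 v\|_{0,\Omega}^2$ directly without invoking the Laplacian identity.) So it suffices to show that $|v|_{2,\Omega}$ is a norm on $H_0^2(\Omega)$ equivalent to $\|v\|_{2,\Omega}$, i.e. that $\|v\|_{2,\Omega} \le C |v|_{2,\Omega}$ for all $v\in H_0^2(\Omega)$.

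The main step is to apply the Poincaré–Friedrichs inequality twice. Since $v\in H_0^1(\Omega)$, we have $\|v\|_{0,\Omega} \le C_\Omega |v|_{1,\Omega} = C_\Omega \|\nabla v\|_{0,\Omega}$. Next, because $v\in H_0^2(\Omega)$ each first-order partial derivative $\partial_i v$ lies in $H_0^1(\Omega)$ (its trace vanishes since $\partial_n v = 0$ and $\nabla_\Gamma v = 0$ on $\Gamma$), so applying Poincaré again to each $\partial_i v$ gives $\|\partial_i v\|_{0,\Omega} \le C_\Omega \|\nabla \partial_i v\|_{0,\Omega} \le C_\Omega \|D^2 v\|_{0,\Omega}$. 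Summing over $i$ yields $|v|_{1,\Omega} \le C |v|_{2,\Omega}$, and combining with the first inequality gives $\|v\|_{0,\Omega} \le C|v|_{2,\Omega}$. Adding $|v|_{1,\Omega}^2$ and $|v|_{2,\Omega}^2$ to both sides produces $\|v\|_{2,\Omega}^2 \le C |v|_{2,\Omega}^2 = C\, a(v,v)$, which is the claim with $\alpha = 1/C$; the constant depends on $\Omega$ through the Poincaré constant $C_\Omega$.

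The only genuine subtlety — and the step I would be most careful about — is justifying that $\nabla v \in (H_0^1(\Omega))^2$ for $v \in H_0^2(\Omega)$, i.e. that the vanishing of $w$ and $\partial_n w$ on the polygonal boundary $\Gamma$ really forces all first partials into $H_0^1$. On a polygonal domain this is standard: $H_0^2(\Omega)$ is by definition the closure of $C_0^\infty(\Omega)$ in the $H^2$-norm, the gradient operator maps this continuously into $(H_0^1(\Omega))^2$ (it maps $C_0^\infty(\Omega)$ into $(C_0^\infty(\Omega))^2$), so the inclusion holds by density. I would simply invoke this density characterization rather than argue via traces on reentrant corners. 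Everything else is routine, so I would keep the write-up short, citing Poincaré–Friedrichs and noting the density argument.
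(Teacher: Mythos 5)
Your proof is correct and takes essentially the same route as the paper: the paper's one-line proof simply cites the equivalence of $\Vert D^2 v\Vert_{0,\Omega}$ with the usual norm on $H_0^2(\Omega)$, and your double application of the Poincar\'e--Friedrichs inequality (to $v$ and then to each $\partial_i v$) is precisely the standard proof of that equivalence. The density argument showing $\nabla v\in (H_0^1(\Omega))^2$ correctly handles the only delicate point.
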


\begin{proof}
The result follows immediately from the fact that
$\Vert D^2 v\Vert_{0,\O}$ is a norm on $\HdoO$,
equivalent with the usual norm. 
\end{proof}

We deduce from Lemma~\ref{ha-elipt} that the linear operator $T$ is well
defined and bounded. Notice that $(\l,w)\in\R\times\HdoO$ solves
problem~\eqref{vibrapri} (and hence problem~\eqref{vibra}) if and only if $Tw=\mu w$
with $\mu\neq0$ and $w\ne0$, in which case $\mu:=\frac1{\l}$.
Moreover, it is easy to check that $T$ is self-adjoint with respect to
the $a(\cdot,\cdot)$ inner product. Indeed, given
$f,g\in\HdoO$, 
$$
a(Tf,g)
=b(f,g)
=b(g,f)
=a(Tg,f)
=a(f,Tg).
$$

The following is an additional regularity result for the solution of
problem~\eqref{T1} and consequently, for the eigenfunctions of $T$.

\begin{lemma}
\label{LEM:REG} 
There exist $s\in(\frac12,1]$ and $C>0$ such that, for all $f\in\LO$,
the solution $u$ of problem~\eqref{T1} satisfies
$u\in\HdsO$ and
$$\Vert u\Vert_{2+s,\O}\le C\Vert f\Vert_{0,\O}.$$
\end{lemma}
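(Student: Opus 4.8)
The plan is to recognise problem~\eqref{T1} as the weak form of the clamped biharmonic boundary value problem and to invoke the classical corner regularity theory for the biharmonic operator on plane polygons (Kondrat'ev, Grisvard, Blum--Rannacher, Nazarov--Plamenevskii).

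First I would record the $H^2$ a priori bound: testing \eqref{T1} with $v=u$ and using Lemma~\ref{ha-elipt} together with the Cauchy--Schwarz inequality gives $\alpha\Vert u\Vert_{2,\O}^2\le a(u,u)=b(f,u)\le\Vert f\Vert_{0,\O}\Vert u\Vert_{0,\O}$, hence $\Vert u\Vert_{2,\O}\le C\Vert f\Vert_{0,\O}$. Next, since $C_0^\infty(\O)$ is dense in $\HdoO$ and $f\in\LO$, taking $v\in C_0^\infty(\O)$ in \eqref{T1} and integrating by parts shows that $u$ solves
$$\Delta^2 u = f\qen\O,\qquad u=\dn u=0\qso\G,$$
the boundary conditions being inherited from $u\in\HdoO$.

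Now I would apply the regularity theory for this problem. In a neighbourhood of each vertex of $\O$ with interior opening $\omega\in(0,2\pi)$, the solution decomposes as the sum of a remainder of higher Sobolev smoothness and a finite linear combination of corner singularities behaving like $r^{z}\varphi(\theta)$, whose admissible exponents are the roots with $\mathrm{Re}\,z>1$ --- precisely those compatible with $u\in\HdoO$ --- of the transcendental equation
$$\sin^{2}\!\big((z-1)\omega\big)=(z-1)^{2}\sin^{2}\omega;$$
moreover each coefficient of a singular term is a bounded linear functional of $f$ on $\LO$, and the remainder is controlled in $\HdsO$ as soon as $2+s<1+\mathrm{Re}\,z_{\min}(\omega)$, where $z_{\min}(\omega)$ is the smallest admissible exponent. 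Combining these local statements with the $H^2$ bound above yields $u\in\HdsO$ together with $\Vert u\Vert_{2+s,\O}\le C(\Vert f\Vert_{0,\O}+\Vert u\Vert_{2,\O})\le C\Vert f\Vert_{0,\O}$, provided $2+s<1+\min_j\mathrm{Re}\,z_{\min}(\omega_j)$ over the (finitely many) vertices of $\O$.

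It then remains to check that $\mathrm{Re}\,z_{\min}(\omega)>3/2$ for every $\omega\in(0,2\pi)$, so that $s$ can be fixed in $(1/2,1]$. For a convex opening $\omega\le\pi$ the characteristic equation has no root with $\mathrm{Re}\,z\in(1,2)$ (for $\omega=\pi$ the roots are $z=2,3,\dots$), so the solution is locally $H^3$; for a reentrant opening $\omega\in(\pi,2\pi)$ one verifies that $z=3/2$ solves the equation only in the limiting slit case $\omega=2\pi$, which does not occur at a genuine vertex of a simply connected polygon, and that $z_{\min}(\omega)$ depends continuously on $\omega$ and stays in $(3/2,2)$ throughout $(\pi,2\pi)$. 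Hence $s$ may be chosen with $1/2<s\le\min\{1,\ \min_j\mathrm{Re}\,z_{\min}(\omega_j)-1\}$, which gives the claim. I expect this last step --- the localisation of the smallest singular exponent at reentrant corners and the fact that it degenerates only in the slit limit --- to be the only genuinely delicate point; the rest is a direct application of the cited regularity results.
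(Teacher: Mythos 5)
Your proposal is correct and follows essentially the same route as the paper, which simply cites the classical corner regularity theory for the clamped biharmonic problem in Grisvard; you have unpacked that citation (strong form, characteristic equation $\sin^{2}((z-1)\omega)=(z-1)^{2}\sin^{2}\omega$, and the fact that the smallest admissible exponent exceeds $3/2$ for every opening $\omega<2\pi$, degenerating only in the slit limit), and all of these details check out. Nothing further is needed.
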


\begin{proof}
The proof follows from the classical regularity result for the
biharmonic problem with its right-hand side in $\LO$ (cf. \cite{G}).
\end{proof}

The constant $s$ in the lemma above is the Sobolev regularity for the
biharmonic equation with homogeneous Dirichlet boundary conditions.
This constant only depends on the domain $\O$. If $\O$ is convex,
then $s=1$. Otherwise, the lemma holds for all $s < s_0$,
where $s_0\in(\frac12,1)$ depends on the largest reentrant
angle of $\O$ (see \cite{G} for the precise equation determining $s_0$).
Hence, because of the compact inclusion $\HdsO\hookrightarrow\HdoO$,
$T$ is a compact operator. Therefore, we have the following spectral
characterization result.

\begin{lemma}
\label{CHAR_SP}
The spectrum of $T$ satisfies
$\sp(T)=\{0\}\cup\left\{\mu_k\right\}_{k\in\N}$, where
$\left\{\mu_k\right\}_{k\in\N}$ is a sequence of real positive
eigenvalues which converges to $0$. The multiplicity
of each eigenvalue is finite.
\end{lemma}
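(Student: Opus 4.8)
The plan is to invoke the classical spectral theory for compact self-adjoint operators on a Hilbert space, applied to $T$ acting on $\HdoO$ equipped with the inner product $a(\cdot,\cdot)$. All the hypotheses needed for this have already been assembled in the excerpt: Lemma~\ref{ha-elipt} shows $a(\cdot,\cdot)$ is an inner product on $\HdoO$ whose induced norm is equivalent to $\Vert\cdot\Vert_{2,\O}$, so $(\HdoO,a(\cdot,\cdot))$ is a Hilbert space; the discussion following Lemma~\ref{LEM:REG} establishes that $T$ is compact, using the additional regularity $u\in\HdsO$ with $s>\tfrac12$ together with the compact Sobolev embedding $\HdsO\hookrightarrow\HdoO$; and the computation right after the definition of $T$ shows $T$ is self-adjoint for $a(\cdot,\cdot)$.

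With these facts in hand, I would argue as follows. By the Hilbert--Schmidt (spectral) theorem for compact self-adjoint operators, $\sp(T)\setminus\{0\}$ consists of a (finite or countably infinite) set of real eigenvalues, each of finite multiplicity, and if this set is infinite it can only accumulate at $0$; moreover $0\in\sp(T)$ whenever $\HdoO$ is infinite-dimensional, which it is. This already gives $\sp(T)=\{0\}\cup\{\mu_k\}_{k\in\N}$ with the $\mu_k$ real and of finite multiplicity, converging to $0$. To see the eigenvalues are positive, I would note that if $Tf=\mu f$ with $f\neq0$, then $\mu\, a(f,f)=a(Tf,f)=b(f,f)=\Vert f\Vert_{0,\O}^2>0$ (the last inequality because $f\in\HdoO$ and $f\neq0$ forces $\Vert f\Vert_{0,\O}\neq0$), and since $a(f,f)>0$ we conclude $\mu>0$. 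Finally, to confirm the eigenvalue sequence is genuinely infinite — so that the spectrum is not merely a finite set plus $\{0\}$ — I would observe that $T$ is injective: if $Tf=0$ then $b(f,v)=a(Tf,v)=0$ for all $v\in\HdoO$, hence $\int_\O fv=0$ for all such $v$, and by density $f=0$. An injective compact operator on an infinite-dimensional space cannot have finite-dimensional range, so it must have infinitely many nonzero eigenvalues (equivalently, $0$ is not an eigenvalue but is in the spectrum as an accumulation point of the $\mu_k$).

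I do not expect any serious obstacle here: the proof is essentially a citation of the abstract spectral theorem combined with the three structural properties of $T$ already verified in the text. The only point requiring a line of care is ruling out a finite spectrum, which is handled by the injectivity argument above; alternatively one could simply cite that $\sp(T)\setminus\{0\}$ is infinite because $a(\cdot,\cdot)$-compactness together with injectivity is incompatible with a finite-rank operator. I would keep the written proof to a few sentences, referencing the standard spectral theory for compact self-adjoint operators (as in \cite{Boffi} or a standard functional analysis text) and the facts established immediately above the statement.
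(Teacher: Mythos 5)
Your proposal is correct and follows essentially the same route as the paper, which states the lemma without a written proof precisely because it is the standard spectral theorem for compact self-adjoint operators applied to $T$ on $\HdoO$ with the $a(\cdot,\cdot)$ inner product, using the compactness and self-adjointness established just before the statement. Your additional verifications (positivity via $\mu\,a(f,f)=b(f,f)>0$ and infinitude of the eigenvalue sequence via injectivity of $T$) are accurate fillings-in of details the paper leaves implicit.
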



\setcounter{equation}{0}
\section{Spectral approximation}
\label{SEC:DISCRETE}

In this section, first we recall the mesh construction and the
assumptions considered to introduce the discrete virtual element spaces.
Then, we will introduce a virtual element discretization of
the eigenvalue problem~\eqref{vibrapri} and provide a spectral
characterization of the resulting discrete eigenvalue problem.

Let $\left\{\CT_h\right\}_h$ be a sequence of decompositions of $\O$
into polygons $\E$. Let $h_\E$ denote the diameter of the element $\E$
and $h$ the maximum of the diameters of all the elements of the mesh,
i.e., $h:=\max_{\E\in\CT_h}h_\E$. In what follows, we denote by $N_K$ the number of vertices of $K$.

For the analysis, we will make the following
assumptions as in \cite{BBCMMR2013,BMR16}:
there exists a positive real number $C_{\CT}$ such that,
for every $h$ and every $\E\in \CT_h$,
\begin{itemize}
\item[{\bf A1}:] the ratio between the shortest edge
and the diameter $h_\E$ of $\E$ is larger than $C_{\CT}$;
\item[{\bf A2}:] $\E\in\CT_h$ is star-shaped with
respect to every point of a  ball
of radius $C_{\CT}h_\E$.
\end{itemize}

For any subset $S\subseteq\R^2$ and nonnegative
integer $k$, we indicate by $\P_{k}(S)$ the space of
polynomials of degree up to $k$ defined on $S$.

We consider now a simple polygon $\E$
(meaning open simply connected sets whose boundary is a non-intersecting line
made of a finite number of straight line segments) and we define
the following finite-dimensional space
\begin{align*}
\VK
:=\left\{v_h\in \HdoK : \Delta^2v_h\in\P_{2}(\E), v_h|_{\partial\E}\in C^0(\partial\E),
v_h|_e\in\P_3(e)\,\,\forall e\in\partial\E,\right.\\
\left.\nabla v_h|_{\partial\E}\in C^0(\partial\E)^2,
\dn v_h|_e\in\P_1(e)\,\,\forall e\in\partial\E\right\},
\end{align*}
where $\Delta^2$ represents the biharmonic operator
and $\dn$ denotes the normal derivative.
We observe
that any $v_{h}\in\VK$ satisfy the following conditions:
\begin{itemize}
\item the trace (and the trace of the gradient)
on the boundary of $\E$ is continuous;

\item $\P_2(\E)\subseteq\VK$.
\end{itemize}

We now introduce two sets ${\bf D_1}$ and ${\bf D_2}$
of linear operators from $\VK$ into $\R$. For
all $v_{h}\in\VK$ they are defined as follows:

\begin{itemize}
\item ${\bf D_1}$ contains linear operators
evaluating $v_{h}$ at the $N_{\E}$ vertices of $\E$;
\item ${\bf D_2}$ contains linear operators evaluating
$\nabla v_h$ at the $N_{\E}$ vertices of $\E$.
\end{itemize}

Note that, as a consequence of definition of $\VK$,
the output values of the two sets of operators ${\bf D_1}$ and ${\bf D_2}$
are sufficient to uniquely determine $v_{h}$
and $\nabla v_h$ on the boundary of $\E$.

In order to construct the discrete scheme, we need some preliminary
definitions. First, we split the bilinear forms $a(\cdot,\cdot)$
and $b(\cdot,\cdot)$ introduced in the previous section as follows:
$$
a(u,v)=\sum_{\E\in\CT_h}a_{\E}(u,v),
\qquad u,v\in\HdoO,
$$
$$
b(u,v)=\sum_{\E\in\CT_h}b_{\E}(u,v),
\qquad u,v\in\HdoO,
$$
with
\begin{equation*}
\label{alocal}
a_{\E}(u,v)
:=\int_{\E}D^2 u:\,D^2 v,
\qquad u,v\in\HdoK,
\end{equation*}
and
\begin{equation*}
\label{blocal}
b_{\E}(u,v)
:=\int_{\E}uv,
\qquad u,v\in\HdoK.
\end{equation*}
Now, we
define the projector $\PiK:\ \VK\longrightarrow\P_2(\E)\subseteq\VK$ for
each $v\in\VK$ as the solution of 
\begin{subequations}
\begin{align}
a_{\E}\big(\PiK v,q\big)
& =a_{\E}(v,q)
\qquad\forall q\in\P_2(\E),
\label{numero}
\\
((\PiK v,q))_{\E}
&=((v,q))_{\E} \qquad\forall q\in\P_1(\E),
\label{numeroo}
\end{align}
\end{subequations}
where $((\cdot,\cdot))_{\E}$ is defined as follows:
\begin{equation*}
((u,v))_{\E}=\sum_{i=1}^{N_{\E}}u(P_i)v(P_i)\qquad\forall u,v\in C^0(\partial\E),
\end{equation*}
where $P_i, 1\le i\le N_{\E}$, are the vertices of $\E$.
We note that the bilinear form $a_{\E}(\cdot,\cdot)$
has a non-trivial kernel, given by $\P_1(\E)$. Hence,
the role of condition \eqref{numeroo} is to select
an element of the kernel of the operator.

Now, we introduce our local virtual space:
\begin{align*}\label{localspace}
\WK
:=\left\{v_h\in\VK : \int_{\E}(\PiK v_h)q=\int_{\E}v_hq\qquad\forall q\in\P_{2}(\E)\right\}.
\end{align*}

It is easy to check that $\WK\subseteq\VK$. Therefore, the operator $\PiK$
is well defined on $\WK$ and computable only on the basis of the output
values of the operators in ${\bf D_1}$ and ${\bf D_2}$.

In \cite[Lemma 2.1]{ABSVsinum16} has been established that the
set of operators ${\bf D_1}$ and ${\bf D_2}$ constitutes
a set of degrees of freedom for the space $\WK$.
Moreover, it is easy to check that
$\P_{2}\subseteq\WK$. This will guarantee the good
approximation properties for the space.

Additionaly, we have that the $\LO$ projector operator
$\PioK:\WK\to\P_{2}(\E)$ is computable from the set of degrees
freedom. In fact, for all $v_h\in\WK$, the function
$\PioK v_h\in\P_{2}(\E)$ is defined by:
\begin{equation}\label{fff}
\int_{\E}(\PioK v_h)q=\int_{\E}v_hq\qquad\forall q\in\P_{2}(\E).
\end{equation}
Now, due to the particular property appearing in definition
of the space $\WK$, the right hand
side in \eqref{fff} is computable using $\PiK v_h$,
and thus $\PioK v_h$ depends only on the values of
the degrees of freedom for $v_h$ and $\nabla v_h$.
Actually, it is easy to check that on the space
$\WK$ the projectors $\PioK v_h$ and $\PiK v_h$
are the same operator. In fact:
\begin{equation}\label{L2proj}
\int_{\E}(\PioK v_h)q=\int_{\E}v_hq=\int_{\E}(\PiK v_h)q\qquad\forall q\in\P_{2}(\E).
\end{equation}
In what follows, we keep the notation $\PiK$ for both operators.

We can now present the global virtual space: for every
decomposition $\CT_h$ of $\O$ into simple polygons $\E$, we define
$$
\Wh:=\left\{v_h\in\HdoO:\ v_h|_{\E}\in\WK\right\}.
$$
A set of degrees of freedom for $\Wh$ is given by
all pointwise values of $v_h$ on all vertices of $\CT_h$
together to all pointwise values of $\nabla v_h$ on all vertices of $\CT_h$,
excluding the vertices on $\G$ (where the values vanishes).
Thus, the dimension of $\Wh$ is tree times the number of interior vertices.

On the other hand, let $s_{K}(\cdot,\cdot)$ and $s_{K}^0(\cdot,\cdot)$
be any symmetric positive definite bilinear forms to be chosen as to satisfy:
\begin{align}
c_0a_K(v_h,v_h)\leq s_\E(v_h,v_h)\leq c_1 a_K(v_h,v_h)&\quad \forall v_h \in W_h^K\quad \mbox{with }\quad \PiK v_h =0 \label{term-stab-SK},\\[1ex]
 c_2b_K(v_h,v_h)\leq s_\E^0(v_h,v_h)\leq c_3 b_K(v_h,v_h)&\quad \forall v_h \in W_h^K. \label{term-stab-SK0}
\end{align}
%

Then, we set
\begin{align*}
a_h(u_h,v_h)
:=\sum_{\E\in\CT_h}a_{h,\E}(u_h,v_h),
\qquad u_h,v_h\in\Wh,\\
b_h(u_h,v_h)
:=\sum_{\E\in\CT_h}b_{h,\E}(u_h,v_h),
\qquad u_h,v_h\in\Wh,
\end{align*}
where $a_{h,\E}(\cdot,\cdot)$ and $b_{h,\E}(\cdot,\cdot)$
are the local bilinear forms defined on
$\WK\times\WK$ by
\begin{align}
a_{h,\E}(u_h,v_h)
:=a_{\E}\big(\PiK u_h,\PiK v_h\big)
+s_{\E}\big(u_h-\PiK u_h,v_h-\PiK v_h\big),
\qquad u_h,v_h\in\WK,\\
b_{h,\E}(u_h,v_h)\label{locforma2}
:=b_{\E}\big(\PiK u_h,\PiK v_h\big)
+s_{K}^{0}\big(u_h-\PiK u_h,v_h-\PiK v_h\big),
\qquad u_h,v_h\in\WK. 
\end{align}
%


The construction of the bilinear forms $a_{h,\E}(\cdot,\cdot)$
and $b_{h,\E}(\cdot,\cdot)$ guarantees the usual consistency
and stability properties of VEM, as noted in the Proposition below.
Since the proof follows standard arguments in the Virtual
Element literature (see \cite{ABSVsinum16,BBCMMR2013,BLRXX}) it is omitted.


\begin{proposition}
The local bilinear forms 
bilinear forms $a_{h,\E}(\cdot,\cdot)$
and $b_{h,\E}(\cdot,\cdot)$ on each element $\E$ satisfy
\begin{itemize}
\item \textit{Consistency}: for all $h > 0$ and for all $\E\in\CT_h$ we have that
\begin{align}
a_{h,\E}(p,v_h)
=a_{\E}(p,v_h)
\qquad\forall p\in\P_2(\E),
\quad\forall v_h\in\WK,\label{consis-a}\\ 
b_{h,\E}(p,v_h)
=b_{\E}(p,v_h)
\qquad\forall p\in\P_2(\E),
\quad\forall v_h\in\WK\label{consis-b}. 
\end{align}
\item \textit{Stability}: There exist positive constants
$\alpha_i, i=1,2,3,4,$ independent of $\E$, such that:
\begin{align}
\alpha_1 a_{\E}(v_h,v_h)
\leq a_{h,\E}(v_h,v_h)
\leq\alpha_2 a_{\E}(v_h,v_h)
\qquad\forall v_h\in\WK,\label{stab-a}\\
\alpha_3 b_{\E}(v_h,v_h)
\leq b_{h,\E}(v_h,v_h)
\leq\alpha_4 b_{\E}(v_h,v_h)
\qquad\forall v_h\in\WK\label{stab-b}. 
\end{align}
\end{itemize}
\end{proposition}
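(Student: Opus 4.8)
The plan is to establish the two properties separately, each on a fixed element $\E\in\CT_h$, by exploiting the defining identity of the projector $\PiK$ and the comparison bounds \eqref{term-stab-SK}--\eqref{term-stab-SK0} satisfied by the stabilizing forms $s_{\E}$ and $s_{\E}^0$.

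\textbf{Consistency.} First I would prove \eqref{consis-a}. Fix $p\in\P_2(\E)$ and $v_h\in\WK$. Since $\PiK$ is a projector onto $\P_2(\E)$ that fixes polynomials of degree up to $2$, we have $\PiK p = p$. Consequently the stabilization argument $p-\PiK p = 0$ vanishes, and by the symmetry and definition of $s_{\E}$ the stabilization term $s_{\E}(p-\PiK p,\,v_h-\PiK v_h)=0$. Therefore
\begin{equation*}
a_{h,\E}(p,v_h) = a_{\E}(\PiK p,\PiK v_h) + s_{\E}(p-\PiK p,\,v_h-\PiK v_h) = a_{\E}(p,\PiK v_h).
\end{equation*}
It remains to see that $a_{\E}(p,\PiK v_h)=a_{\E}(p,v_h)$: this is exactly the defining property \eqref{numero} of $\PiK$, written with the roles adjusted by symmetry of $a_{\E}(\cdot,\cdot)$, namely $a_{\E}(\PiK v_h,q)=a_{\E}(v_h,q)$ for all $q\in\P_2(\E)$, applied with $q=p$. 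The proof of \eqref{consis-b} is identical in structure: $\PiK p=p$ kills the $s_{\E}^0$ term, and then $b_{\E}(\PiK v_h,p)=b_{\E}(v_h,p)$ follows from \eqref{L2proj} (equivalently, from the defining property \eqref{fff} of the $L^2$-projector, which coincides with $\PiK$ on $\WK$), applied with $q=p\in\P_2(\E)$.

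\textbf{Stability.} For \eqref{stab-a}, fix $v_h\in\WK$ and write $v_h = \PiK v_h + (v_h-\PiK v_h)$. Using the orthogonality $a_{\E}(\PiK v_h,\,v_h-\PiK v_h)=0$ (again from \eqref{numero}, since $\PiK v_h\in\P_2(\E)$), Pythagoras gives $a_{\E}(v_h,v_h) = a_{\E}(\PiK v_h,\PiK v_h) + a_{\E}(v_h-\PiK v_h,\,v_h-\PiK v_h)$. On the other hand, $a_{h,\E}(v_h,v_h) = a_{\E}(\PiK v_h,\PiK v_h) + s_{\E}(v_h-\PiK v_h,\,v_h-\PiK v_h)$, and since $\PiK(v_h-\PiK v_h)=0$ we may apply \eqref{term-stab-SK} to the second summand to bound $s_{\E}(v_h-\PiK v_h,\,v_h-\PiK v_h)$ between $c_0$ and $c_1$ times $a_{\E}(v_h-\PiK v_h,\,v_h-\PiK v_h)$. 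Combining, $a_{h,\E}(v_h,v_h)$ lies between $\min\{1,c_0\}$ and $\max\{1,c_1\}$ times $a_{\E}(v_h,v_h)$, which yields \eqref{stab-a} with $\alpha_1:=\min\{1,c_0\}$ and $\alpha_2:=\max\{1,c_1\}$. The argument for \eqref{stab-b} is the same, using the $L^2$-orthogonality of $\PiK v_h$ against $v_h-\PiK v_h$ in $\LO$ (from \eqref{L2proj}) and applying \eqref{term-stab-SK0}, with $\alpha_3:=\min\{1,c_2\}$ and $\alpha_4:=\max\{1,c_3\}$; note \eqref{term-stab-SK0} already holds for all of $\WK$ without the kernel restriction, so this step is slightly cleaner.

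The only mild subtlety — and the step I would be most careful about — is making sure all the orthogonality relations and the fact that $\PiK$ is a genuine projection (so $\PiK(v_h-\PiK v_h)=0$) are invoked correctly, since $\PiK$ is defined by the coupled pair \eqref{numero}--\eqref{numeroo}: the $a_{\E}$-orthogonality I need comes from \eqref{numero}, while idempotence of $\PiK$ on $\P_2(\E)$ and hence on the range requires both conditions together with $\P_1(\E)\subseteq\P_2(\E)$. Everything else is a routine splitting-plus-comparison argument, which is exactly why the authors chose to omit it.
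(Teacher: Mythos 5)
Your proof is correct and follows exactly the standard argument that the paper itself omits (referring to \cite{ABSVsinum16,BBCMMR2013,BLRXX}): consistency because $\PiK p=p$ for $p\in\P_2(\E)$ annihilates the stabilization and the defining relations \eqref{numero} and \eqref{L2proj} transfer the projection, and stability via the $a_{\E}$- and $L^2$-orthogonal (Pythagorean) splittings combined with \eqref{term-stab-SK}--\eqref{term-stab-SK0}, noting $\PiK(v_h-\PiK v_h)=0$ and $\P_2(\E)\subseteq\WK$. No gaps; the constants $\alpha_1=\min\{1,c_0\}$, $\alpha_2=\max\{1,c_1\}$, $\alpha_3=\min\{1,c_2\}$, $\alpha_4=\max\{1,c_3\}$ are indeed independent of $\E$.
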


Now, we are in a position to write the virtual
element discretization of problem~\eqref{vibrapri}.

Find $(\l_h,w_h)\in\R\times\Wh$, $w_h\ne0$, such that
\begin{equation}\label{P11}
a_h(w_h,v_h)
=\l_h b_h(w_h,v_h)
\qquad\forall v_h\in\Wh.
\end{equation}

We observe that by virtue of \eqref{stab-a}, the
bilinear form $a_{h}(\cdot,\cdot)$ is bounded. Moreover, as shown in
the following lemma, it is also uniformly elliptic.
\begin{lemma}
\label{ha-elipt-disc}
There exists a constant $\beta>0$, independent of $h$, such that
$$
a_{h}(v_h,v_h)
\ge\beta\left\|v_h\right\|_{2,\O}^2
\qquad\forall v_h\in\Wh.
$$
\end{lemma}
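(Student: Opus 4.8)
The plan is to reduce the global ellipticity to a local, element-by-element estimate that follows from the stability property \eqref{stab-a} together with Lemma~\ref{ha-elipt}. First I would observe that, by the definition of $a_h(\cdot,\cdot)$ as a sum of the local contributions $a_{h,\E}(\cdot,\cdot)$, and using the lower stability bound in \eqref{stab-a} on each element $\E\in\CT_h$, we have
\begin{equation*}
a_h(v_h,v_h)=\sum_{\E\in\CT_h}a_{h,\E}(v_h,v_h)\ge\alpha_1\sum_{\E\in\CT_h}a_{\E}(v_h,v_h)=\alpha_1\,a(v_h,v_h)\qquad\forall v_h\in\Wh,
\end{equation*}
where in the last equality I use that $a(\cdot,\cdot)$ itself splits as $\sum_{\E\in\CT_h}a_{\E}(\cdot,\cdot)$ and that every $v_h\in\Wh$ belongs to $\HdoO$ by construction of the global space.

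Next I would invoke Lemma~\ref{ha-elipt}, which gives $a(v_h,v_h)\ge\alpha\Vert v_h\Vert_{2,\O}^2$ for all $v_h\in\HdoO$, and in particular for all $v_h\in\Wh\subset\HdoO$. Combining the two displayed estimates yields
\begin{equation*}
a_h(v_h,v_h)\ge\alpha_1\,a(v_h,v_h)\ge\alpha_1\alpha\,\Vert v_h\Vert_{2,\O}^2\qquad\forall v_h\in\Wh,
\end{equation*}
so the claim holds with $\beta:=\alpha_1\alpha$. The constant $\alpha_1$ is independent of $h$ by the Proposition (it is independent of $\E$, hence uniform over the mesh), and $\alpha$ depends only on $\O$; therefore $\beta$ is independent of $h$, as required.

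There is essentially no serious obstacle here: the only point that deserves a word of care is the uniformity of $\alpha_1$ — one must make sure the stability constants in \eqref{stab-a} are genuinely mesh-independent, which is exactly what the Proposition asserts under the mesh regularity assumptions \textbf{A1} and \textbf{A2} (this is the standard consequence of the equivalence between the stabilizing bilinear form $s_\E(\cdot,\cdot)$ and $a_\E(\cdot,\cdot)$ on the kernel of $\PiK$, together with the consistency term $a_\E(\PiK\cdot,\PiK\cdot)$). Granting that, the ellipticity of $a_h(\cdot,\cdot)$ on $\Wh$ is immediate, and in particular problem~\eqref{P11} is well posed.
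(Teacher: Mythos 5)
Your argument is correct and is essentially the paper's own proof: sum the lower bound of \eqref{stab-a} over the elements and then apply Lemma~\ref{ha-elipt} on $\Wh\subset\HdoO$, obtaining the ellipticity constant $\beta=\alpha_1\alpha$ (the paper states it as $\beta=\alpha\min\{\alpha_1,1\}$, which is the same computation). No gaps; your remark on the $h$-independence of $\alpha_1$ is exactly the point the stability property guarantees.
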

\begin{proof}
Thanks to \eqref{stab-a} and Lemma~\ref{ha-elipt}, it is easy to check that
the above inequality holds with
$\beta:=\alpha\min\left\{\alpha_{1},1\right\}$.
\end{proof}

The discrete version of the operator $T$ is then given by
\begin{align*}
T_h:\ \Wh & \longrightarrow\Wh,
\\
f_h & \longmapsto T_hf_h:=u_h,
\end{align*}
where $u_h\in\Wh$ is the solution of the corresponding
discrete source problem
$$
a_h(u_h,v_h)=b_{h}(f_h,v_h)
\qquad\forall v_h\in\Wh.
$$

Because of Lemma~\ref{ha-elipt-disc}, the linear operator $T_h$ is well
defined and bounded uniformly with respect to $h$. Once more, as in the
continuous case, $(\l_h,w_h)\in\R\times\Wh$ solves problem~\eqref{P11}
if and only if $T_hw_h=\mu_h w_h$ with
$\mu_h\neq0$ and $w_h\ne0$, in which case $\mu_h:=\frac1{\l_h}$.
Moreover, $T_h$ is self-adjoint with
respect to $a_{h}(\cdot,\cdot)$.
Because of this, it is easy to prove the following
spectral characterization.

\begin{theorem}
\label{CHAR_SP_DISC}
The spectrum of $T_h$ consists of $M_h:=\dim(\Wh)$ eigenvalues,
repeated according to their respective multiplicities.
All of them are real and positive.
\end{theorem}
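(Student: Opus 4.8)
The plan is to translate the discrete eigenvalue problem~\eqref{P11} into a standard matrix eigenvalue problem and then invoke elementary linear algebra. First I would fix a basis $\{\varphi_j\}_{j=1}^{M_h}$ of $\Wh$, namely the Lagrange-type basis associated with the $3$ degrees of freedom per interior vertex described above, where $M_h=\dim(\Wh)$ equals three times the number of interior vertices. Writing $w_h=\sum_j x_j\varphi_j$, problem~\eqref{P11} is equivalent to the generalized eigenvalue problem $\mathbf{A}\mathbf{x}=\l_h\mathbf{B}\mathbf{x}$, where $\mathbf{A}:=(a_h(\varphi_j,\varphi_i))_{i,j}$ and $\mathbf{B}:=(b_h(\varphi_j,\varphi_i))_{i,j}$. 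Both matrices are symmetric because $a_h(\cdot,\cdot)$ and $b_h(\cdot,\cdot)$ are symmetric (being assembled from the symmetric local forms $a_{h,\E}$ and $b_{h,\E}$, which in turn are built from the symmetric forms $a_\E$, $b_\E$, $s_\E$ and $s_\E^0$).

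The next step is to record positive definiteness. By Lemma~\ref{ha-elipt-disc}, for $\mathbf{x}\ne\mathbf{0}$ with $v_h:=\sum_j x_j\varphi_j$ we have $\mathbf{x}^{\mathsf T}\mathbf{A}\mathbf{x}=a_h(v_h,v_h)\ge\beta\|v_h\|_{2,\O}^2>0$, so $\mathbf{A}$ is symmetric positive definite. Similarly, using the lower stability bound~\eqref{stab-b} together with positivity of $b_\E(v_h,v_h)=\int_\E v_h^2$, one gets $\mathbf{x}^{\mathsf T}\mathbf{B}\mathbf{x}=b_h(v_h,v_h)\ge\alpha_3\sum_{\E}\|v_h\|_{0,\E}^2=\alpha_3\|v_h\|_{0,\O}^2>0$ for $v_h\ne0$, hence $\mathbf{B}$ is symmetric positive definite as well.

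The conclusion then follows from the simultaneous diagonalization of two symmetric matrices when one of them is positive definite. Since $\mathbf{B}$ is symmetric positive definite, it admits a Cholesky factorization $\mathbf{B}=\mathbf{L}\mathbf{L}^{\mathsf T}$ with $\mathbf{L}$ invertible; setting $\mathbf{y}:=\mathbf{L}^{\mathsf T}\mathbf{x}$ turns the generalized problem into the standard symmetric eigenvalue problem $(\mathbf{L}^{-1}\mathbf{A}\mathbf{L}^{-\mathsf T})\mathbf{y}=\l_h\mathbf{y}$. The matrix $\mathbf{L}^{-1}\mathbf{A}\mathbf{L}^{-\mathsf T}$ is symmetric, so by the spectral theorem it has exactly $M_h$ real eigenvalues counted with multiplicity; moreover it is positive definite because $\mathbf{A}$ is, so all eigenvalues $\l_h$ are positive. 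Recalling that $\mu_h=1/\l_h$ are precisely the nonzero eigenvalues of $T_h$ and that (by Lemma~\ref{ha-elipt-disc}) $T_h$ restricted to $\Wh$ is invertible, $\sp(T_h)$ consists of exactly $M_h$ positive eigenvalues repeated according to multiplicity, which is the assertion. There is no real obstacle here: the only point requiring care is to confirm that $\mathbf{B}$ is genuinely positive definite (not merely positive semidefinite), which is exactly what the lower bound in~\eqref{stab-b} provides, and to note that $0$ is not an eigenvalue of $T_h$ since $b_h(f_h,\cdot)\equiv0$ forces $f_h=0$.
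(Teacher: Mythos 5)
Your proof is correct and follows essentially the same route the paper intends: the paper omits the proof, noting only that it follows easily from the self-adjointness of $T_h$ with respect to $a_h(\cdot,\cdot)$, and your matrix formulation (symmetric generalized eigenproblem $\mathbf{A}\mathbf{x}=\l_h\mathbf{B}\mathbf{x}$ with both matrices positive definite, reduced via Cholesky to the spectral theorem) is precisely the concrete realization of that argument. The two points you flag as needing care --- strict positive definiteness of $\mathbf{B}$ via \eqref{stab-b} and the injectivity of $T_h$ --- are indeed the only substantive checks, and you handle both correctly.
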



In order to prove that the solutions of the discrete problem~\eqref{P11} converge to those
of the continuous problem~\eqref{vibrapri}, the standard procedure would be to show that $T_h$
converges in norm to $T$ as $h$ goes to zero. However, such a proof does not seem straightforward
in our case. In fact, the operator $T_h$ is not well defined for any $f\in\HdoO$, since
the definition of bilinear form $b_{h,\E}(\cdot,\cdot)$ in \eqref{locforma2}
needs the degrees of freedom and in particular the pointwise values
of $f$, but it is for any $f\in\Wh$.

To circumvent this drawback, we will resort instead to the spectral theory
from \cite{DNR1} and \cite{DNR2}. In spite of the fact that
the main use of this theory is when $T$ is a non-compact operator,
it can also be applied to compact $T$ and we will show that in our case it works.

With this aim, we first recall the following approximation result
which is derived by interpolation
between Sobolev spaces (see for instance \cite[Theorem I.1.4]{GR}
from the analogous result for integer values of $s$).
In its turn, the result for integer values is stated
in \cite[Proposition 4.2]{BBCMMR2013} and follows from the
classical Scott-Dupont theory (see \cite{BS-2008}
and \cite[Proposition 3.1]{ABSVsinum16}):
\begin{proposition}\label{app1}
Let $v\in H^{2+s}(\E)$ with $s\in(1/2,1]$.
Assume  that assumption {\bf A2} is satisfied.
Then, there exist $v_{\pi}\in\P_{2}(\E)$
and $C>0$ such that
$$\vert v-v_{\pi}\vert_{\ell,\E}\le Ch_{\E}^{2+s-\ell}\vert v\vert_{2+s,\E}, \quad \ell=0,1,2.$$
\end{proposition}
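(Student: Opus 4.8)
The plan is to obtain the fractional estimate from the two classical integer-order Bramble--Hilbert estimates by real interpolation between Sobolev spaces, exactly as in \cite[Theorem I.1.4]{GR}. First I would reduce to a reference configuration of unit diameter: picking a point $x_{\E}$ of the ball appearing in assumption {\bf A2} and setting $\widehat{\E}:=h_{\E}^{-1}(\E-x_{\E})$, the element $\widehat{\E}$ has diameter $1$ and is still star-shaped with respect to every point of a ball of radius $C_{\CT}$. Since the collection of all such $\widehat{\E}$ (as $\E$ ranges over all elements of all meshes) is a single uniformly Lipschitz family parametrized only by $C_{\CT}$, it suffices to prove the $h_{\E}$-free estimate $\Vert v-v_{\pi}\Vert_{\ell,\widehat{\E}}\le C\,\vert v\vert_{2+s,\widehat{\E}}$ on $\widehat{\E}$ with $C=C(\ell,s,C_{\CT})$; pulling this back through the dilation reproduces the claimed powers $h_{\E}^{2+s-\ell}$, because the scaling law $\vert w\vert_{m,\E}=h_{\E}^{1-m}\vert\widehat w\vert_{m,\widehat{\E}}$ holds for fractional $m$ as well and the orders add up correctly.

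On $\widehat{\E}$ I would take $v_{\pi}:=Qv\in\P_{2}(\widehat{\E})$, where $Q$ is the degree-$2$ averaged Taylor polynomial operator: $Q$ is a fixed bounded linear projection onto $\P_2(\widehat{\E})$, so $I-Q$ annihilates $\P_2(\widehat{\E})$. By the classical Scott--Dupont theory (see \cite{BS-2008} and \cite[Proposition 4.2]{BBCMMR2013}, \cite[Proposition 3.1]{ABSVsinum16}), which applies with a constant depending only on $C_{\CT}$ because {\bf A2} bounds the chunkiness parameter of $\widehat{\E}$, one has, for the two integer endpoints $m\in\{2,3\}$ and for $\ell=0,1,2$,
\begin{equation*}
\Vert (I-Q)v\Vert_{\ell,\widehat{\E}}\le C\,\vert v\vert_{m,\widehat{\E}}\qquad\forall v\in H^{m}(\widehat{\E}),
\end{equation*}
with the \emph{same} operator $Q$ in both cases. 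Interpolating these two bounds by means of the identity $H^{2+s}(\widehat{\E})=\bigl[H^{2}(\widehat{\E}),H^{3}(\widehat{\E})\bigr]_{s,2}$ (valid on Lipschitz domains, with norm-equivalence constants again uniform over the family through a common extension operator) gives $\Vert (I-Q)v\Vert_{\ell,\widehat{\E}}\le C\,\Vert v\Vert_{2+s,\widehat{\E}}$. To replace the full norm on the right by the seminorm $\vert v\vert_{2+s,\widehat{\E}}$ I would use that $I-Q$ kills $\P_{2}$, so $(I-Q)v=(I-Q)(v-q)$ for every $q\in\P_2(\widehat{\E})$; since moreover $\vert v-q\vert_{2+s,\widehat{\E}}=\vert v\vert_{2+s,\widehat{\E}}$ (the second derivatives of $v$ and of $q\in\P_2$ differ by the constant $D^2q$, which cancels in the Gagliardo seminorm), the Deny--Lions/Bramble--Hilbert lemma $\inf_{q\in\P_2(\widehat{\E})}\Vert v-q\Vert_{2+s,\widehat{\E}}\le C\,\vert v\vert_{2+s,\widehat{\E}}$ closes the reference-element estimate. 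Scaling back to $\E$ then yields $\vert v-v_{\pi}\vert_{\ell,\E}\le\Vert v-v_{\pi}\Vert_{\ell,\E}\le C\,h_{\E}^{2+s-\ell}\vert v\vert_{2+s,\E}$ for $\ell=0,1,2$.

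The routine parts here are the change of variables and the bookkeeping of the powers of $h_{\E}$. The one point that requires genuine care — and the place where {\bf A2} is essential — is the \emph{uniformity in $\E$} of all the constants: the Scott--Dupont constant (chunkiness), the constant in the Sobolev interpolation identity, and the Deny--Lions constant. All three are controlled once the dilated elements $\widehat{\E}$ are recognized as a single uniformly Lipschitz class depending only on $C_{\CT}$; dropping {\bf A2}, these constants could blow up. As an alternative that bypasses the interpolation identity, one can argue directly: from $\Vert (I-Q)v\Vert_{\ell,\widehat{\E}}\le C\Vert v-q\Vert_{2,\widehat{\E}}$ for all $q\in\P_2$ (using only the $m=2$ Scott--Dupont bound together with the boundedness of $Q$) and a fractional Poincar\'e--Wirtinger inequality $\inf_{q\in\P_2(\widehat{\E})}\Vert v-q\Vert_{2,\widehat{\E}}\le C\,\vert v\vert_{2+s,\widehat{\E}}$ on $\widehat{\E}$, the same conclusion follows.
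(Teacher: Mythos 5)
Your argument is correct and follows essentially the same route the paper indicates: the integer-order Bramble--Hilbert/Scott--Dupont estimates on the scaled element (as in \cite[Proposition 4.2]{BBCMMR2013}) combined with real interpolation between $H^{2}(\E)$ and $H^{3}(\E)$ as in \cite[Theorem I.1.4]{GR}, with assumption {\bf A2} guaranteeing uniformity of all constants. You merely make explicit the scaling, the choice of the averaged Taylor projection, and the reduction from the full norm to the seminorm, which the paper leaves to the cited references.
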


For the analysis we will introduce the broken $H^{2}$-seminorm:
$$|v|_{2,h}^{2}:=\sum_{\E\in\CT_h}|v|_{2,\E}^{2},$$
which is well defined for every $v\in L^{2}(\O)$ such that
$v|_{\E}\in H^{2}(\E)$ for all polygon $\E\in \CT_{h}$.

Now, for $v\in\Wh$, let $\Pi_h$ be defined in $\LO$ by $(\Pi_h v)|_\E:=\PiK v$
for all $\E\in \CT_h$, where $\PiK$ has been defined in \eqref{numero}-\eqref{numeroo}.
\begin{lemma}
 \label{A}
Let $v\in\Wh$. Then, there exists $C>0$ such that 
\begin{align*}
\Vert v-\Pi_h v\Vert_{0,\O}\leq 
Ch^2\Vert v\Vert_{2,\O}.
\end{align*}
\end{lemma}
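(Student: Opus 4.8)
The plan is to work element by element: since both the norm on the left and the quantity on the right are sums of local contributions, it suffices to prove the estimate $\Vert v-\PiK v\Vert_{0,\E}\le Ch_{\E}^2\vert v\vert_{2,\E}$ on each polygon $\E\in\CT_h$ (in fact, with $\vert v\vert_{2,\E}$ on the right, which summed over $\E$ and bounded by $\Vert v\Vert_{2,\O}$ gives the claim). First I would recall that $\PiK$ restricted to $\WK$ coincides with the $L^2(\E)$-orthogonal projection $\PioK$ onto $\P_2(\E)$, as established in \eqref{L2proj}. Hence for any $q\in\P_2(\E)$ we have the Pythagorean-type bound
\begin{equation*}
\Vert v-\PiK v\Vert_{0,\E}=\Vert v-\PioK v\Vert_{0,\E}\le\Vert v-q\Vert_{0,\E}.
\end{equation*}

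Next I would choose $q=v_{\pi}$, the polynomial approximation of $v$ furnished by Proposition~\ref{app1} with $\ell=0$ and $s=1$. Strictly speaking that proposition requires $v\in H^{2+s}(\E)$; here $v\in\WK\subseteq H^2(\E)$ only, so I would instead invoke the underlying Scott--Dupont / Bramble--Hilbert estimate (the $\ell=0$, $m=2$ case, valid under assumption {\bf A2}) which gives a $q\in\P_1(\E)\subseteq\P_2(\E)$ with $\Vert v-q\Vert_{0,\E}\le Ch_{\E}^2\vert v\vert_{2,\E}$ for any $v\in H^2(\E)$, with $C$ depending only on $C_{\CT}$. Combining the two displays yields $\Vert v-\PiK v\Vert_{0,\E}\le Ch_{\E}^2\vert v\vert_{2,\E}$.

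Finally I would sum over all elements, use $h_\E\le h$, and apply Cauchy--Schwarz (or simply $\sum_\E\vert v\vert_{2,\E}^2\le\Vert v\Vert_{2,\O}^2$) to conclude
\begin{equation*}
\Vert v-\Pi_h v\Vert_{0,\O}^2=\sum_{\E\in\CT_h}\Vert v-\PiK v\Vert_{0,\E}^2\le Ch^4\sum_{\E\in\CT_h}\vert v\vert_{2,\E}^2\le Ch^4\Vert v\Vert_{2,\O}^2,
\end{equation*}
which gives the stated bound after taking square roots. The only delicate point is the first step: one must be careful that $\PiK$ on $\WK$ really is the $L^2$-projection (so that it is a contraction in the $L^2$-norm and the best-approximation inequality applies), rather than the energy projection defined by \eqref{numero}--\eqref{numeroo}; this is exactly the content of \eqref{L2proj}, so no genuine obstacle remains — the argument is a short, standard VEM estimate.
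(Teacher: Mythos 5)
Your argument is correct and follows essentially the same route as the paper: both proofs reduce to the local best-approximation property of $\PiK$ in $L^2(\E)$ (via \eqref{L2proj}) and then invoke a Bramble--Hilbert/Scott--Dupont polynomial approximation estimate for $H^2(\E)$ functions before summing over the elements. Your remark that Proposition~\ref{app1} itself requires $H^{2+s}(\E)$ regularity and that one must instead use the underlying estimate valid for $v\in H^2(\E)$ is exactly why the paper cites \cite[Proposition 4.1]{MRR2015} at that step, so there is no gap.
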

\begin{proof}
Let $v\in\Wh$. Now, let $\PiK v\in\P_{2}(\E)$ as defined in \eqref{numero}-\eqref{numeroo}.
We have for all $r\in\P_{2}(\E)$ that
$$\Vert v-\PiK v\Vert_{0,\E}^2
=\int_{\E}\left( v-\PiK v\right)\left( v-\PiK v \right)
=\int_{\E}\left( v-\PiK v\right)\left( v-r  \right).$$
Thus,
$$\Vert v-\PiK v\Vert_{0,\E}\leq\inf_{r\in\P_2(\E)} \| v-r \|_{0,K}\leq
Ch_\E^2\Vert v\Vert_{2,\E},$$
where we have used \eqref{L2proj} and \cite[Proposition 4.1]{MRR2015}
and the result follows.
\end{proof}

Now, the remainder of this section is devoted
to prove the following properties which will be
used in the sequel:

\begin{lemma}
\label{lemcotste}
There exists $C>0$ such that, for all $f_h\in\Wh$, if $u=Tf_h$ and
$u_h=T_hf_h$, then
$$
\left\|\left(T-T_h\right)f_h\right\|_{2,\O}
=\left\|u-u_h\right\|_{2,\O}
\le C\left(\left\|\Pi_h f_h-f_h\right\|_{0,\O}+\left\|u-u_I\right\|_{2,\O}
+\left|u-u_{\pi}\right|_{2,h}\right),
$$
for all $u_I\in\Wh$ and for all $u_{\pi}\in\LO$ such that
$u_{\pi}|_{\E}\in\P_2(\E) \quad \forall \E\in\CT_h$.
\end{lemma}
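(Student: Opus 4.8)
The goal is a Céa-type estimate for $\|u-u_h\|_{2,\O}$ where $u=Tf_h$ solves the continuous source problem and $u_h=T_hf_h$ solves the discrete one. I would start from an arbitrary $u_I\in\Wh$ and use the discrete ellipticity from Lemma~\ref{ha-elipt-disc} applied to $u_h-u_I\in\Wh$:
$$
\beta\|u_h-u_I\|_{2,\O}^2\le a_h(u_h-u_I,u_h-u_I)=a_h(u_h,u_h-u_I)-a_h(u_I,u_h-u_I).
$$
For the first term I would invoke the discrete source equation $a_h(u_h,v_h)=b_h(f_h,v_h)$ with $v_h=u_h-u_I$. Then the estimate splits into three groups of terms that must each be controlled: (i) $a_h(u_I,u_h-u_I)$ compared against $a(u,u_h-u_I)$, (ii) $b_h(f_h,u_h-u_I)$ compared against $b(f_h,u_h-u_I)$, and (iii) the continuous identity $a(u,u_h-u_I)=b(f_h,u_h-u_I)$ from \eqref{T1}, which lets the two ``continuous'' pieces cancel. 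After the dust settles one bounds $\|u-u_h\|_{2,\O}\le\|u-u_I\|_{2,\O}+\|u_I-u_h\|_{2,\O}$.

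\textbf{Handling the $a$-terms.} Writing $a_h(u_I,u_h-u_I)-a(u,u_h-u_I)$ element by element, I would insert the polynomial $u_\pi$ on each $\E$ and use the consistency property \eqref{consis-a}: $a_{h,\E}(u_\pi,v_h)=a_\E(u_\pi,v_h)$. This reduces the discrepancy to terms of the form $a_{h,\E}(u_I-u_\pi,v_h)$ and $a_\E(u_\pi-u,v_h)$. The stability bound \eqref{stab-a} controls $a_{h,\E}(u_I-u_\pi,v_h)$ by $C|u_I-u_\pi|_{2,\E}|v_h|_{2,\E}$, and then $|u_I-u_\pi|_{2,\E}\le|u_I-u|_{2,\E}+|u-u_\pi|_{2,\E}$. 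Summing over $\E$ and using Cauchy--Schwarz in the element index gives a bound by $C\big(\|u-u_I\|_{2,\O}+|u-u_\pi|_{2,h}\big)\|v_h\|_{2,\O}$ with $v_h=u_h-u_I$.

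\textbf{Handling the $b$-terms.} For $b_h(f_h,u_h-u_I)-b(f_h,u_h-u_I)$ I would proceed element-wise, writing $b_{h,\E}(f_h,v_h)-b_\E(f_h,v_h)$ and using \eqref{L2proj} (that $\PiK$ is the $L^2$-projection on $\WK$) to rewrite $b_\E(\PiK f_h,\PiK v_h)=b_\E(\PiK f_h,v_h)=b_\E(f_h,\PiK v_h)$. Combined with the stability bound \eqref{stab-b} on the $s_\E^0$-term, the difference collapses to something like $b_\E(f_h-\PiK f_h,\,v_h-\PiK v_h)$ plus a controlled stabilization remainder, which by Cauchy--Schwarz is $\le C\|f_h-\Pi_h f_h\|_{0,\E}\|v_h-\Pi_h v_h\|_{0,\E}$. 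Here I would use Lemma~\ref{A} to bound $\|v_h-\Pi_h v_h\|_{0,\O}\le Ch^2\|v_h\|_{2,\O}\le C\|v_h\|_{2,\O}$, so the whole $b$-contribution is $\le C\|f_h-\Pi_h f_h\|_{0,\O}\|v_h\|_{2,\O}$. Dividing through by $\|v_h\|_{2,\O}=\|u_h-u_I\|_{2,\O}$ and collecting all three groups yields exactly the asserted inequality, after a final triangle inequality and renaming constants.

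\textbf{Main obstacle.} The routine-but-delicate point is the bookkeeping in the $b$-term: the discrete form $b_h$ mixes the $\PiK$-projected part with a stabilization $s_\E^0$, and one must make sure the stabilization remainder is genuinely of order $\|f_h-\Pi_h f_h\|_{0,\O}$ times $\|v_h-\Pi_h v_h\|_{0,\O}$ rather than of lower order in $h$; this is where \eqref{consis-b}, \eqref{stab-b} and \eqref{L2proj} must be combined carefully. By contrast the $a$-term manipulation is the standard VEM Strang-type argument. One should also note that $\|v_h\|_{2,\O}$ appearing on the right can be absorbed because $v_h=u_h-u_I$ and we divide by its norm; the possibility $u_h=u_I$ is trivial.
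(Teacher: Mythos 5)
Your proposal is correct and follows essentially the same Strang-type argument as the paper: discrete ellipticity applied to $v_h=u_h-u_I$, insertion of $u_\pi$ with the consistency property \eqref{consis-a} for the $a$-terms, cancellation via the continuous equation, and an element-wise consistency/stability bound for $b_h(f_h,\cdot)-b(f_h,\cdot)$ yielding the $\|\Pi_h f_h-f_h\|_{0,\O}$ term. Your variant of the $b$-term bookkeeping (using \eqref{L2proj} to reduce the difference to $f_h-\Pi_h f_h$ paired with $v_h-\Pi_h v_h$) is a minor, slightly sharper variation of the paper's direct Cauchy--Schwarz bound against $\|v_h\|_{0,\E}$, but the substance is identical.
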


\begin{proof}
Let $f_h\in \Wh$. For $u_I\in\Wh$, we set $v_h:=u_h-u_I$. Thus
\begin{equation}\label{convThplusminusu_I}
|| (T-T_h)f_h||_{2,\O}\leq || u-u_I||_{2,\O} +||v_h ||_{2,\O}.
\end{equation}
Now, thanks to Lemma~\ref{ha-elipt-disc}, the definition
of $a_{h,{\E}}(\cdot,\cdot)$ and those of $T$ and $T_h$, we have
\begin{align}
\beta || v_h||_{2,\O}^2& \leq a_h(v_h,v_h)=a_h(u_h,v_h)-a_h(u_I,v_h)=b_h(f_h,v_h)-\sum\limits_{K\in \mathcal{T}_h}a_{h,K}(u_I,v_h)\nonumber\\
&  = b_h(f_h,v_h)-\sum\limits_{K\in \mathcal{T}_h} \Big\{ a_{h,K}(u_I-u_{\pi},v_h) +a_{h,K}(u_{\pi},v_h) \Big\}\nonumber\\ 
&  = b_h(f_h,v_h)-\sum\limits_{K\in \mathcal{T}_h} \Big\{ a_{h,K}(u_I-u_{\pi},v_h) +a_K(u_\pi-u,v_h)+a_{K}(u,v_h) \Big\}\nonumber\\
&  = b_h(f_h,v_h)-b(f_h,v_h)-\sum\limits_{K\in \mathcal{T}_h} \Big\{ a_{h,K}(u_I-u_{\pi},v_h) +a_K(u_\pi-u,v_h) \Big\}\label{eqconv2}.
\end{align}
Then, we bound the first term on the right hand
side of the previous inequality as follows
%
\begin{align}
b_h(f_h,v_h)-b(f_h,v_h)&=\sum\limits_{K\in \mathcal{T}_h}\Big\{ b_{h,K}(f_h,v_h)-b_K(f_h,v_h)\Big\}\nonumber\\
&=\sum\limits_{K\in \mathcal{T}_h}\Big\{ b_{h,K}(f_h-\PiK f_h,v_h)-b_K(f_h-\PiK f_h,v_h)\Big\}\nonumber\\
&\leq\sum\limits_{K\in \mathcal{T}_h}\Big\{ b_{h,K}(f_h-\PiK f_h,f_h-\PiK f_h)^{1/2}b_{h,K}(v_h,v_h)^{1/2}- ||f_h-\PiK f_h||_{0,K}||v_h ||_{0,K}\Big\}\nonumber\\
&\leq C \sum\limits_{K\in \mathcal{T}_h}||f_h-\PiK f_h||_{0,K}||v_h ||_{0,K},
\end{align}
where we have used the consistency, Cauchy-Schwartz inequality and stability
of $b_{h,K}(\cdot,\cdot)$.

Thus, from \eqref{eqconv2}, using the above bound together with the
Cauchy-Schwartz and triangular inequalities, we obtain
\begin{align*}
\beta\left\|v_h\right\|^2_{2,\O}
&\le C\sum_{\E\in\CT_h}
\left\|\PiK f_h-f_h\right\|_{0,\E}
\left\|v_h\right\|_{0,\E}+\sum_{\E\in\CT_h}
\left(\alpha_{2}\vert u_{I}-u_{\pi}\vert_{2,\E}
+\vert u_{\pi}-u\vert_{2,\E}\right)\vert v_h\vert_{2,\E}\\
&\le C\left(\sum_{\E\in\CT_h}
\left\|\PiK f_h-f_h\right\|_{0,\E}^{2}
+\vert u_{I}-u\vert_{2,\E}^{2}
+\vert u_{\pi}-u\vert_{2,\E}^{2}\right)^{1/2}\Vert v_h\Vert_{2,\O}\\
&\le C\left(
\left\|\Pi_h f_h-f_h\right\|_{0,\O}
+\left\|u_{I}-u\right\|_{2,\O}
+\vert u_{\pi}-u\vert_{2,h}\right)\Vert v_h\Vert_{2,\O}.
\end{align*}
Therefore, the proof follows from \eqref{convThplusminusu_I}
and the above inequality.
\end{proof}

The next step is to find appropriate term $u_I$ that can be used in
the above lemma. Thus, we have the following result.
\begin{proposition}\label{app2}
Assume {\textbf{A1}--\textbf{A2}} are satisfied,
let $v\in\HdsO$ with $s\in(1/2,1]$. Then, there exist $v_{I}\in\Wh$
and $C>0$ such that
$$\Vert v-v_{I}\Vert_{2,\O}\le Ch^s\vert v\vert_{2+s,\O}.$$
\end{proposition}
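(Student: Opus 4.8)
The plan is to define $v_I$ element by element through the degrees of freedom, and then to combine the local polynomial approximation of Proposition~\ref{app1} with a scaling argument for the associated interpolation operator. First, since $s>1/2$ we have the embedding $\HdsO\hookrightarrow C^1(\overline\O)$, so the pointwise values of $v$ and of $\nabla v$ at the vertices of $\CT_h$ are well defined. On each $\E\in\CT_h$ I would let $v_I|_\E=:\mathcal I_\E v\in\WK$ be the unique virtual function whose degrees of freedom in ${\bf D_1}$ and ${\bf D_2}$ coincide with those of $v$; this is legitimate by the unisolvence of ${\bf D_1}\cup{\bf D_2}$ on $\WK$ established in \cite[Lemma 2.1]{ABSVsinum16}. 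Since adjacent elements share these degrees of freedom, $v_I$ is globally $C^1$ with $v_I|_\E\in H^2(\E)$, hence $v_I\in H^2(\O)$; and because $v,\nabla v$ vanish on $\G$, so do the boundary degrees of freedom of $v_I$, whence $v_I\in\HdoO$ and thus $v_I\in\Wh$. Finally, $\P_2(\E)\subseteq\WK$ together with unisolvence shows that $\mathcal I_\E$ reproduces $\P_2(\E)$.

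Next I would reduce the error bound to a stability estimate for $\mathcal I_\E$. Fix $\E$ and let $v_\pi\in\P_2(\E)$ be the polynomial of Proposition~\ref{app1}, so that $|v-v_\pi|_{\ell,\E}\le Ch_\E^{2+s-\ell}|v|_{2+s,\E}$ for $\ell=0,1,2$. Using $\mathcal I_\E v_\pi=v_\pi$ and writing $v-v_I=(v-v_\pi)-\mathcal I_\E(v-v_\pi)$, I get
$$|v-v_I|_{2,\E}\le|v-v_\pi|_{2,\E}+|\mathcal I_\E(v-v_\pi)|_{2,\E},$$
and the first term is already $\le Ch_\E^{s}|v|_{2+s,\E}$.

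For the second term the plan is to pass to the reference configuration $\widehat\E$ of unit diameter via the dilation $x\mapsto(x-x_\E)/h_\E$; since the constraints defining $\WK$ (the condition $\Delta^2 v_h\in\P_2$, the polynomial boundary data and the moment conditions) are scale invariant, the interpolation commutes with this change of variables, $\widehat{\mathcal I_\E\phi}=\mathcal I_{\widehat\E}\widehat\phi$. On $\widehat\E$ one has, with a constant depending only on $C_{\CT}$,
$$\|\mathcal I_{\widehat\E}\widehat\phi\|_{2,\widehat\E}\le C\Big(\max_i|\widehat\phi(\widehat P_i)|+\max_i|\nabla\widehat\phi(\widehat P_i)|\Big)\le C\|\widehat\phi\|_{2+s,\widehat\E},$$
where the first inequality is the equivalence of norms on the finite-dimensional space $W_h^{\widehat\E}$ — uniform over the admissible family of scaled polygons thanks to {\bf A1}--{\bf A2}, cf.\ \cite{BBCMMR2013,ABSVsinum16} — and the second follows from the embeddings $H^2(\widehat\E)\hookrightarrow C^0(\overline{\widehat\E})$ and $H^{1+s}(\widehat\E)\hookrightarrow C^0(\overline{\widehat\E})$. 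Applying this with $\widehat\phi=\widehat{v-v_\pi}$, undoing the scaling (which produces the factor $h_\E^{-1}$ in $|\cdot|_{2,\E}$), using that $|v-v_\pi|_{2+s,\E}=|v|_{2+s,\E}$ since $D^2v_\pi$ is constant, and inserting the estimates of Proposition~\ref{app1} term by term, every resulting contribution is $\le Ch_\E^{s}|v|_{2+s,\E}$; hence $|v-v_I|_{2,\E}\le Ch_\E^{s}|v|_{2+s,\E}$.

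It then remains to sum over $\E\in\CT_h$, which gives $|v-v_I|_{2,h}\le Ch^{s}|v|_{2+s,\O}$; since $v-v_I\in\HdoO$, the equivalence of the seminorm $|\cdot|_{2,\O}$ with $\|\cdot\|_{2,\O}$ on $\HdoO$ (the fact already used in Lemma~\ref{ha-elipt}) upgrades this to the claimed bound $\|v-v_I\|_{2,\O}\le Ch^{s}|v|_{2+s,\O}$. I expect the crux to be the stability step: since the members of $\WK$ are not available in closed form, the $h_\E$-uniform boundedness of $\mathcal I_\E$ cannot be checked directly and must be obtained through the scaling-plus-norm-equivalence argument above, where the mesh regularity assumptions {\bf A1}--{\bf A2} are essential.
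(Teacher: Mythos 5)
Your overall skeleton (interpolate the degrees of freedom, use $\P_2(\E)\subseteq\WK$ to reduce everything to the error $v-v_\pi$, scale to a unit-diameter polygon, sum over elements) is sensible, and the bookkeeping of powers of $h_\E$ is correct. But the step you yourself flag as the crux is not actually established, and it is precisely the step that cannot be obtained the way you propose. The bound
$$\|\mathcal{I}_{\widehat\E}\widehat\phi\|_{2,\widehat\E}\le C\Bigl(\max_i|\widehat\phi(\widehat P_i)|+\max_i|\nabla\widehat\phi(\widehat P_i)|\Bigr)$$
is indeed an equivalence of norms on the finite-dimensional space $W_h^{\widehat\E}$ for each \emph{fixed} polygon, but the space itself changes with the polygon, and the admissible unit-diameter shapes allowed by {\bf A1}--{\bf A2} form a continuum; finite dimensionality gives no uniform constant, and neither \cite{BBCMMR2013} nor \cite{ABSVsinum16} proves such a uniform dof-to-$H^2$ stability for the interpolation operator (the norm equivalences available there concern the stabilization term on the kernel of $\PiK$, as in \eqref{term-stab-SK}, which is a different statement). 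Making this uniform is genuinely delicate --- it is the subject of dedicated work such as \cite{BLRXX} --- and in particular one must control, uniformly in the shape, how the interior polynomial $\Delta^2(\mathcal{I}_\E\phi)\in\P_2(\E)$ is implicitly determined by the degrees of freedom through the moment conditions defining $\WK$. As written, your argument has a hole exactly where all the difficulty lies.

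The proof the paper points to (\cite[Proposition~4.4]{BMR16}, \cite[Proposition~3.1]{ABSVsinum16}) avoids this issue by a different construction: using {\bf A2} one sub-triangulates each polygon from the star center, takes a classical $C^1$ (Clément/reduced Hsieh--Clough--Tocher type) interpolant $v_c$ of $v$ on that auxiliary triangulation, defines $v_I\in\WK$ through the boundary data inherited from $v_c$ and the local biharmonic problem, and bounds $|v_c-v_I|_{2,\E}$ by an energy estimate for that local problem rather than by a dof-to-norm equivalence. If you want a self-contained proof you should follow that route, or else explicitly invoke a uniform stability result of the type proved in \cite{BLRXX}; the scaling-plus-norm-equivalence argument, however natural, does not close on its own.
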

\begin{proof}
The proof follows repeating the arguments
from \cite[Proposition~4.4]{BMR16}
(see also \cite[Proposition~3.1]{ABSVsinum16}).
\end{proof}

As we mention before, to prove that $T_h$
provides a correct spectral approximation of 
$T$, we will resort to the theory developed
in \cite{DNR1} for noncompact operators. To this 
end, we first introduce some notations.
For any linear operator $S: 
\HdoO \longrightarrow \HdoO$, we define the norm
\begin{equation*}
\Vert S\Vert_h:=\sup_{0\ne v_h\in\Wh}\dfrac{\|S v_h\|_{2,\O}}{\|v_h\|_ {2,\O}}.
\end{equation*}

Moreover, we recall the definition of the
gap $\widehat{\delta}$ between two closed subspaces
$\CM$ and $\CN$ of $\HdoO$:
$$\widehat{\delta}(\CM,\CN):=\max\{\delta(\CM,\CN),\delta(\CN,\CM)\},$$
where
$$
\delta(\CM,\CN)
:=\sup_{x\in\CM:\ 
\left\|x\right\|_{2,\O}=1}
\left(\inf_{y\in\CN}\left\|x-y\right\|_{2,\O}\right).
$$

The theory from \cite{DNR1} guarantees approximation
of the spectrum of $T$, provide the following two properties are satisfied:
\begin{itemize}
\item (P1): $\|T-T_h\|_h\rightarrow 0,\quad \text{as } h\rightarrow 0$,
\item (P2): $\displaystyle\forall 
\phi\in \HdoO,\quad
\lim_{h\rightarrow 
0}\delta(\phi,\Wh)=0$.
\end{itemize}

Property (P2) follows immediately from the approximation
property of the virtual element space (see Proposition \ref{app2})
and the density of smooth functions in $\HdoO$.
Property (P1) is a consequence of the following lemma.

\begin{lemma}
There exist $C>0$ and $s\in(1/2,1]$, independent of h, such that
$$\|T-T_h\|_h\le Ch^s.$$
\end{lemma}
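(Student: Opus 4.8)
The plan is to combine Lemma~\ref{lemcotste} with the approximation results established just before. Fix $f_h\in\Wh$ with $\|f_h\|_{2,\O}=1$, set $u:=Tf_h$ and $u_h:=T_hf_h$. Lemma~\ref{lemcotste} bounds $\|(T-T_h)f_h\|_{2,\O}$ by $C(\|\Pi_h f_h-f_h\|_{0,\O}+\|u-u_I\|_{2,\O}+|u-u_\pi|_{2,h})$ for \emph{any} admissible $u_I\in\Wh$ and piecewise-polynomial $u_\pi$, so it suffices to estimate each of the three terms by $Ch^s$.

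First I would handle the term $\|\Pi_h f_h-f_h\|_{0,\O}$: since $f_h\in\Wh$, Lemma~\ref{A} gives $\|\Pi_h f_h-f_h\|_{0,\O}\le Ch^2\|f_h\|_{2,\O}=Ch^2\le Ch^s$ because $s\le1$. For the remaining two terms I would exploit the additional regularity of $u$: since $f_h\in\Wh\subset\LO$, Lemma~\ref{LEM:REG} yields $u=Tf_h\in\HdsO$ with $\|u\|_{2+s,\O}\le C\|f_h\|_{0,\O}\le C\|f_h\|_{2,\O}=C$. Then I choose $u_I\in\Wh$ to be the interpolant from Proposition~\ref{app2}, so that $\|u-u_I\|_{2,\O}\le Ch^s|u|_{2+s,\O}\le Ch^s$; and on each $\E$ I choose $u_\pi|_\E:=v_\pi$ to be the local polynomial projection from Proposition~\ref{app1}, which gives $|u-u_\pi|_{2,\E}\le Ch_\E^s|u|_{2+s,\E}$, hence after summing over $\E\in\CT_h$, $|u-u_\pi|_{2,h}\le Ch^s|u|_{2+s,\O}\le Ch^s$. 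Collecting the three bounds and taking the supremum over $f_h\in\Wh$ with $\|f_h\|_{2,\O}=1$ yields $\|T-T_h\|_h\le Ch^s$ with the same $s\in(1/2,1]$ as in Lemma~\ref{LEM:REG}.

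There is no serious obstacle here: the work has essentially been front-loaded into Lemma~\ref{lemcotste} (the Strang-type estimate) and into the interpolation/projection Propositions~\ref{app1} and~\ref{app2}, so the proof is a matter of assembling these ingredients and verifying that the regularity exponent $s$ supplied by Lemma~\ref{LEM:REG} is consistent with the approximation orders. The only mild point to check is that the constant $C$ stays independent of $h$ through the summation over elements, which follows because the per-element constants in Propositions~\ref{app1}--\ref{app2} are uniform under assumptions \textbf{A1}--\textbf{A2}, and that the boundedness of $f_h$ in $\LO$ by its $\HdoO$-norm is used (valid since $\O$ is bounded). This completes the verification of property (P1).

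\begin{proof}
Let $f_h\in\Wh$ and set $u:=Tf_h$, $u_h:=T_hf_h$. Since $f_h\in\Wh\subset\LO$, Lemma~\ref{LEM:REG} gives $u\in\HdsO$ for some $s\in(1/2,1]$ with
$$\|u\|_{2+s,\O}\le C\|f_h\|_{0,\O}\le C\|f_h\|_{2,\O}.$$
By Proposition~\ref{app2} there exists $u_I\in\Wh$ with $\|u-u_I\|_{2,\O}\le Ch^s|u|_{2+s,\O}$, and by Proposition~\ref{app1}, applied on each $\E\in\CT_h$, there exists $u_\pi\in\LO$ with $u_\pi|_\E\in\P_2(\E)$ and $|u-u_\pi|_{2,\E}\le Ch_\E^s|u|_{2+s,\E}$; summing over $\E$,
$$|u-u_\pi|_{2,h}\le Ch^s|u|_{2+s,\O}.$$
Moreover, by Lemma~\ref{A},
$$\|\Pi_h f_h-f_h\|_{0,\O}\le Ch^2\|f_h\|_{2,\O}\le Ch^s\|f_h\|_{2,\O},$$
since $s\le1$. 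Inserting these three bounds into Lemma~\ref{lemcotste} gives
$$\|(T-T_h)f_h\|_{2,\O}\le Ch^s\|f_h\|_{2,\O}.$$
Dividing by $\|f_h\|_{2,\O}$ and taking the supremum over $0\ne f_h\in\Wh$ yields $\|T-T_h\|_h\le Ch^s$.
\end{proof}
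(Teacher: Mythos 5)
Your proof is correct and follows essentially the same route as the paper: it invokes Lemma~\ref{lemcotste}, controls $\|\Pi_h f_h-f_h\|_{0,\O}$ via Lemma~\ref{A}, and controls the interpolation and polynomial-projection terms via Propositions~\ref{app2} and~\ref{app1} combined with the regularity $\|u\|_{2+s,\O}\le C\|f_h\|_{0,\O}$ from Lemma~\ref{LEM:REG}. You have merely written out in full the details that the paper compresses into two lines; nothing is missing.
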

\begin{proof}
Given $f_h\in\Wh$, we have that (see Lemma~\ref{lemcotste})
$$
\left\|\left(T-T_h\right)f_h\right\|_{2,\O}
=\left\|u-u_h\right\|_{2,\O}
\le C\left(\left\|\Pi_h f_h-f_h\right\|_{0,\O}+\left\|u-u_I\right\|_{2,\O}
+\left|u-u_{\pi}\right|_{2,h}\right),
$$
now, using Lemma~\ref{A}, Propositions~\ref{app1} and \ref{app2}
and Lemma~\ref{LEM:REG}, we have
$$
\left\|\left(T-T_h\right)f_h\right\|_{2,\O}
\le
C\left(h^2\Vert f_h\Vert_{2,\O}+h^s\Vert f_h\Vert_{0,\O}\right)\le Ch^s\Vert f_h\Vert_{2,\O}.
$$
The proof is complete.
\end{proof}

\setcounter{equation}{0}
\section{Convergence and error estimates}
\label{SEC:convergence}

In this section we will adapt the arguments from \cite{DNR1,DNR2}
to prove convergence of our spectral approximation as well as to
obtain error estimates for the approximate eigenvalues and
eigenfunctions.



The following results are consequence of property (P1) (see \cite{DNR1}):
\begin{lemma}\label{bound-oper-resolv}
Suppose that (P1) holds true and let $F\subset \rho(T)$ be closed.
Then, there exist positive constants $C$ and $h_0$ independent of $h$,
such that for $h< h_0$
\begin{equation*}\label{eq-lemma-boundoperresolv}
\sup_{v_h\in \Wh} ||R_z(T_h)v_h ||_{2,\O}\leq
C ||v_h ||_{2,\O} \qquad \forall z\in F.
\end{equation*}
\end{lemma}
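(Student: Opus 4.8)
The statement to prove is: if (P1) holds and $F\subset\rho(T)$ is closed, then for $h$ small enough the operators $R_z(T_h)$ are uniformly bounded on $\Wh$, uniformly for $z\in F$. This is a standard perturbation argument, and the plan is to write $zI-T_h$ as a perturbation of $zI-T$ and invert via a Neumann series. Concretely, for $v_h\in\Wh$ and $z\in F$, I would start from the algebraic identity
\[
(zI-T_h)^{-1} = (zI-T)^{-1}\bigl[I-(T-T_h)(zI-T)^{-1}\bigr]^{-1},
\]
which is legitimate as soon as the operator in brackets is invertible on the relevant subspace. The first ingredient is the uniform boundedness of $R_z(T)$ for $z\in F$: since $F$ is closed and contained in $\rho(T)$, and $T$ is compact so $\sp(T)$ has $0$ as its only accumulation point, one has $\sup_{z\in F}\|R_z(T)\|_{2,\O}=:M<\infty$ (if $F$ is unbounded one also uses $\|R_z(T)\|\le 1/(|z|-\|T\|)\to 0$ as $|z|\to\infty$; in any case the supremum is finite).

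The second ingredient is that, by (P1), $\|T-T_h\|_h\to0$, so there is $h_0>0$ such that $\|T-T_h\|_h \le \tfrac{1}{2M}$ for all $h<h_0$. Here one must be slightly careful because $R_z(T)$ does not preserve $\Wh$ and $\|\cdot\|_h$ only measures the action on $\Wh$: the cleanest way is to estimate directly. Given $v_h\in\Wh$, set $x:=R_z(T)v_h\in\HdoO$, so $\|x\|_{2,\O}\le M\|v_h\|_{2,\O}$, and write $(zI-T_h)R_z(T_h)v_h=v_h$. Using $(zI-T_h)=(zI-T)+(T-T_h)$ one gets, after applying $R_z(T)$ and rearranging,
\[
R_z(T_h)v_h = R_z(T)v_h + R_z(T)\,(T-T_h)\,R_z(T_h)v_h .
\]
Since $R_z(T_h)v_h\in\Wh$, the term $(T-T_h)R_z(T_h)v_h$ is controlled by $\|T-T_h\|_h\,\|R_z(T_h)v_h\|_{2,\O}$, and then $R_z(T)$ of it is bounded by $M\|T-T_h\|_h\|R_z(T_h)v_h\|_{2,\O}\le\tfrac12\|R_z(T_h)v_h\|_{2,\O}$ for $h<h_0$. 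Absorbing this into the left-hand side yields
\[
\|R_z(T_h)v_h\|_{2,\O}\le 2M\|v_h\|_{2,\O}\qquad\forall z\in F,\ \forall h<h_0,
\]
which is the claim with $C:=2M$.

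The one genuine subtlety — the step I expect to need the most care — is the interplay between the two norms $\|\cdot\|_{2,\O}$ and $\|\cdot\|_h$: $T-T_h$ is not small in the full operator norm on $\HdoO$ (indeed $T_h$ is only defined there in a nominal sense, cf. the remark in the text preceding Lemma~\ref{lemcotste}), only on $\Wh$ via $\|\cdot\|_h$. The argument above is arranged precisely so that $T-T_h$ is always applied to elements of $\Wh$ (namely to $R_z(T_h)v_h$), so $\|\cdot\|_h$ suffices; one must make sure never to apply $T-T_h$ to a generic element of $\HdoO$. Beyond that, the argument is routine: one invokes the finiteness of $M$, chooses $h_0$ from (P1), and absorbs. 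Since this is exactly the classical resolvent-stability lemma of Descloux--Nassif--Rappaz, the cited reference \cite{DNR1} covers it, and the proof in the paper is presumably just a pointer to that reference together with the remark that compactness of $T$ makes the hypotheses of that theory trivially available.
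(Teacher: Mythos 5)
Your proposal is correct: the resolvent identity plus absorption argument, using the finiteness of $\sup_{z\in F}\|R_z(T)\|$ and choosing $h_0$ so that $\|T-T_h\|_h\le 1/(2M)$, with care taken to apply $T-T_h$ only to elements of $\Wh$, is exactly the classical Descloux--Nassif--Rappaz perturbation argument, and the paper itself gives no proof of this lemma but simply cites \cite{DNR1}, where this same reasoning is carried out. The only cosmetic slip is a sign in the resolvent identity (it should read $R_z(T_h)v_h = R_z(T)v_h - R_z(T)(T-T_h)R_z(T_h)v_h$), which does not affect the norm estimate.
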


\begin{theorem}
Let $U\subset \mathbb{C}$ be an open set containing $\sp(T)$.
Then, there exists $h_0>0$ such that $\sp(T_h)\subset U$ for all $h<h_0$.
\end{theorem}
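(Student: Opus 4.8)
The plan is to exploit the spectral convergence machinery of \cite{DNR1}, using property (P1) (which has just been established) together with the resolvent bound of Lemma~\ref{bound-oper-resolv}. The statement asserts that no spurious eigenvalues appear outside any neighborhood $U$ of $\sp(T)$: that is, eventually $\sp(T_h)\subset U$.

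First I would pick the complementary set. Let $U\subset\mathbb{C}$ be open with $\sp(T)\subset U$. Since $T$ is compact (Lemma~\ref{CHAR_SP}), $\sp(T)=\{0\}\cup\{\mu_k\}_{k\in\N}$ with $\mu_k\to 0$, so $\sp(T)$ is compact; hence one can choose a bounded open set $U'$ with $\sp(T)\subset U'\subset\overline{U'}\subset U$, and the closed set $F:=\overline{U'}\setminus U' $ — or more simply $F:=\mathbb{C}\setminus U$ intersected with a large closed disk containing $\sp(T_h)$ for all small $h$ (the operators $T_h$ are uniformly bounded by Lemma~\ref{ha-elipt-disc}, so $\sp(T_h)$ lies in a fixed disk) — is a compact subset of $\rho(T)$. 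Then apply Lemma~\ref{bound-oper-resolv} on this $F$: there are constants $C,h_0>0$ such that $\sup_{v_h\in\Wh}\|R_z(T_h)v_h\|_{2,\O}\le C\|v_h\|_{2,\O}$ for all $z\in F$ and $h<h_0$. Since $R_z(T_h)$ maps $\Wh$ into $\Wh$ (because $T_h$ does, and $\Wh$ is finite-dimensional so $zI-T_h$ restricted to $\Wh$ is invertible whenever $z\notin\sp(T_h)$), this bound says precisely that $(zI-T_h)^{-1}$ is a bounded operator on $\Wh$ with norm $\le C$; in particular $z\notin\sp(T_h)$ for every $z\in F$ and every $h<h_0$. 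Consequently $\sp(T_h)\cap F=\emptyset$, and since $\sp(T_h)$ is contained in the large disk we fixed, $\sp(T_h)\subset U$.

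The one point that needs a little care — and what I expect to be the main (mild) obstacle — is the bookkeeping at $z=0$ and near it. The operator $T$ has $0$ as an accumulation point of its spectrum, so $F=\mathbb{C}\setminus U$ is automatically bounded away from $0$ only because $0\in\sp(T)\subset U$; one must make sure $U$ genuinely contains a neighborhood of $0$, which it does by hypothesis since $0\in\sp(T)\subset U$ and $U$ is open. Likewise one should confirm that Lemma~\ref{bound-oper-resolv} is being invoked with the right set: it requires $F\subset\rho(T)$ closed, and $\mathbb{C}\setminus U$ is closed and contained in $\rho(T)$ by construction, so combined with the uniform boundedness of $\{T_h\}$ (to localize to a disk) everything fits. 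No new estimates are required beyond what is already in the excerpt; the theorem is essentially a packaging of Lemma~\ref{bound-oper-resolv}, which in turn rests on (P1).
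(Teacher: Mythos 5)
Your argument is correct and is essentially the route the paper itself relies on: the paper gives no written proof, presenting the theorem as a direct consequence of property (P1) via Lemma~\ref{bound-oper-resolv} (following \cite{DNR1}), which is exactly your packaging with $F=(\mathbb{C}\setminus U)$ intersected with a fixed disk containing all $\sp(T_h)$ (uniform boundedness of $T_h$). The only reading point is that Lemma~\ref{bound-oper-resolv} is to be understood as asserting that $F\subset\rho(T_h)$ for $h<h_0$ together with the uniform resolvent bound, which is how you use it, so no gap remains.
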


An immediate consequence of this theorem is that the proposed
virtual element method does not introduce spurious
modes with eigenvalues interspersed among those with a physical meaning.

By applying the results from \cite{DNR1} to our problem,
we conclude the spectral convergence of $T_h$ to $T$  as $h\to0$.
More precisely, let $\mu\ne0$ be an isolated eigenvalue
of $T$ with multiplicity $m$ and let $\mathcal{C}$
be an open circle in the complex plane centered at $\mu$,
such that $\mu$ is the only eigenvalue of $T$ lying in $\mathcal{C}$
and $\partial \mathcal{C}\cap\sp(T)=\emptyset$. Then, according
to Section~2 in \cite{DNR1} for $h$ small enough there exist
$m$ eigenvalues $\mu_h^{(1)},\ldots,\mu_h^{(m)}$ of $T_h$
(repeated according to their respective multiplicities)
which lie in $\mathcal{C}$. Therefore, these eigenvalues
$\mu_h^{(1)},\ldots,\mu_h^{(m)}$ converge to $\mu$ as $h$
goes to zero.

The next step is to obtain error estimates for the spectral
approximation. With this aim, we will use the theory from \cite{DNR2}.
However, we cannot apply the results from this reference directly
to our problem, because of the variational crimes in the bilinear
forms used to define the operator $T_h$. Therefore, we need to
extend the results from this reference to our case. With this purpose,
we follow an approach recently presented in \cite{BMRR}.
 
Consider the eigenspace $\mathcal{E}$ of $T$ corresponding
to $\mu$ and the $T_h-$invariant subspace $\mathcal{E}_h$
spanned by the eigenspaces of $T_h$ corresponding to
$\mu_h^{(1)},\ldots,\mu_h^{(m)}$. As a consequence of
Lemma~\ref{bound-oper-resolv}, we have that
\begin{equation*}\label{conseq-of-Lemma}
|| (zI-T_h)v_h||_{2,\O}\geq C || v_h||_{2,\O}
\qquad \forall v_h\in \Wh, \quad \forall z\in \partial \mathcal{C}, 
\end{equation*}
for $h$ small enough.

Let $\mathcal{P}_h:H_0^2(\O)\to \Wh \hookrightarrow \Hcd$
be the projector with range $\Wh$ defined by the relation 
\begin{equation*}
a(\mathcal{P}_hu-u,v_h)=0\qquad \forall v_h\in \Wh.
\end{equation*}
Notice that $\mathcal{P}_h$ is bounded uniformly on $h$
(namely $||\mathcal{P}_h u||_{2,\O}\leq ||u ||_{2,\O}$)
and
\begin{equation*}
|| u-\mathcal{P}_h u||_{2,\O}=\delta (u,W_h)\qquad \forall u\in \Hcd.
\end{equation*}

Let us define
$$\widehat{T}_h:=T_h\mathcal{P}_h:\Hcd \to \Wh.$$
Notice that $\sp(\widehat{T}_h)=\sp(T_h)\cup \{0 \}$.

Next, we introduce the following spectral projectors
(the second one, is well defined at least for $h$ small enough): 
\begin{itemize}
\item The spectral projector of $T$ relative to
$\mu$: $F:=\frac{1}{2\pi i}\int_{\partial \mathcal{C}}R_z(T)dz$;
\item The spectral projector of $\widehat{T}_h$ relative to
$\mu_h^{(1)},\ldots,\mu_h^{(m)}$: $\widehat{F}_h:=\frac{1}{2\pi i}
\int_{\partial \mathcal{C}}R_z(\widehat{T}_h)dz$.
\end{itemize}

We have the following result (see Lemma 1 in \cite{DNR2}).
\begin{lemma}\label{unif-bound-R_z}
There exist $h_0>0$ and $C>0$ such that
\begin{equation*}
||R_z(\widehat{T}_h) ||\leq C\qquad \forall z\in \partial \mathcal{C}, \quad \forall h\leq h_0.
\end{equation*}
\end{lemma}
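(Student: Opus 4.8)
The plan is to establish the uniform bound on $\|R_z(\widehat T_h)\|$ by comparing the resolvent of $\widehat T_h$ with that of $T$ on the circle $\partial\mathcal C$, exploiting that $z$ stays at a fixed positive distance from $\sp(T)$ and that $\|T-\widehat T_h\|$ becomes small (in the appropriate sense) thanks to property (P1). First I would write, for $z\in\partial\mathcal C$, the algebraic identity
\begin{equation*}
zI-\widehat T_h=(zI-T)\bigl[I-R_z(T)(\widehat T_h-T)\bigr]
\end{equation*}
on $\Hcd$, so that it suffices to show $\|R_z(T)(\widehat T_h-T)\|<\tfrac12$ uniformly for $z\in\partial\mathcal C$ and $h$ small; then $zI-\widehat T_h$ is invertible with $\|R_z(\widehat T_h)\|\le \|R_z(T)\|\,(1-\tfrac12)^{-1}\le 2\sup_{z\in\partial\mathcal C}\|R_z(T)\|=:C$, the last supremum being finite since $\partial\mathcal C\subset\rho(T)$ and the resolvent is continuous on the compact set $\partial\mathcal C$.

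The remaining point is to bound $\|R_z(T)(\widehat T_h-T)\|$. I would split $\widehat T_h-T=T_h\mathcal P_h-T=(T_h-T)\mathcal P_h+T(\mathcal P_h-I)$. For the first piece, write $(T_h-T)\mathcal P_h u=(T-T_h)(-\mathcal P_h u)$ with $\mathcal P_h u\in\Wh$, so $\|(T_h-T)\mathcal P_h u\|_{2,\O}\le\|T-T_h\|_h\|\mathcal P_h u\|_{2,\O}\le\|T-T_h\|_h\|u\|_{2,\O}$ using the uniform boundedness of $\mathcal P_h$; by (P1) this tends to $0$. For the second piece, $\|T(\mathcal P_h-I)u\|_{2,\O}\le\|T\|\,\|(\mathcal P_h-I)u\|_{2,\O}=\|T\|\,\delta(u,\Wh)$, but here I must be careful: this is not uniformly small over all $u$. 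The correct route — following \cite{DNR2,BMRR} — is to use instead that $T$ is compact, so $T=T F+T(I-F)$ where $\operatorname{ran}F=\mathcal E$ is finite-dimensional; on the finite-dimensional space $\mathcal E$ one has $\delta(\varphi,\Wh)\to0$ uniformly on the unit ball by (P2) together with compactness of that unit ball, hence $\|T(\mathcal P_h-I)|_{\mathcal E}\|\to0$; for the complementary part one uses that $T R_z(T)$ composed with $(I-F)$ is still bounded and that the relevant quantity can be absorbed. Actually the cleanest argument is: since $T$ is compact and $\Wh$ eventually approximates every vector (P2), it is standard that $\|(I-\mathcal P_h)T\|\to0$ in operator norm; then writing $R_z(T)(\widehat T_h-T)=R_z(T)(T_h-T)\mathcal P_h-R_z(T)(I-\mathcal P_h)T$ (using $T\mathcal P_h-T=-(I-\mathcal P_h)T$ on $\operatorname{ran}\mathcal P_h^{}$... ) I would invoke $\|(I-\mathcal P_h)T\|\to0$ and $\|T-T_h\|_h\to0$ to conclude.

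The main obstacle I anticipate is precisely the mismatch between the $\|\cdot\|_h$-type convergence furnished by (P1) (which only controls $T-T_h$ when applied to discrete functions) and the genuine operator-norm estimate needed for the Neumann-series argument. Bridging this requires the compactness of $T$: one reduces everything to the finite-dimensional eigenspace $\mathcal E$, on which norm convergence follows from (P2) and a compactness-of-the-unit-ball argument, exactly the device used in \cite{DNR2} and adapted in \cite{BMRR}. Once that reduction is in place the rest is the routine Neumann-series manipulation sketched above, and choosing $h_0$ small enough that the perturbation term is $<\tfrac12$ for all $z\in\partial\mathcal C$ simultaneously (possible since $\partial\mathcal C$ is compact and the bound is uniform in $z$) completes the proof with $C=2\sup_{z\in\partial\mathcal C}\|R_z(T)\|$.
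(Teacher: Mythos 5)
Your Neumann-series strategy hinges on showing that $\Vert R_z(T)(\widehat T_h-T)\Vert$ is small in the operator norm on $\Hcd$, and the piece $(T_h-T)\mathcal P_h$ is indeed handled correctly by (P1) together with the uniform boundedness of $\mathcal P_h$. The genuine gap is in the remaining piece $T(\mathcal P_h-I)$, and neither of the two devices you offer closes it. The reduction to the finite-dimensional eigenspace $\mathcal E$ does not apply here: in the composition $T(I-\mathcal P_h)$ the projector acts \emph{first}, so writing $T=TF+T(I-F)$ leaves the term $T(I-F)(I-\mathcal P_h)$, which is not small. Your ``cleanest argument'' rests on the identity $T\mathcal P_h-T=-(I-\mathcal P_h)T$, which is false: $T\mathcal P_h-T=-T(I-\mathcal P_h)$, and $T$ does not commute with $\mathcal P_h$; consequently the standard fact $\Vert(I-\mathcal P_h)T\Vert\to0$ (compactness of $T$ plus (P2)) controls the wrong-sided composition. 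The observation that would rescue your route is that both $T$ and $\mathcal P_h$ are self-adjoint with respect to $a(\cdot,\cdot)$, so $T(I-\mathcal P_h)$ is the $a$-adjoint of $(I-\mathcal P_h)T$ and has the same operator norm up to the constants of norm equivalence; with that, $\Vert T(I-\mathcal P_h)\Vert\to0$ and your perturbation argument can be completed.

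You should also be aware that the paper's proof (by reference to Lemma 11 of \cite{BMRR}) takes a different and more economical route that never requires norm convergence of $\widehat T_h$ to $T$: decompose any $v\in\Hcd$ as $v=\mathcal P_h v+(I-\mathcal P_h)v$, note that $(zI-\widehat T_h)v=(zI-T_h)\mathcal P_h v+z(I-\mathcal P_h)v$, where the first summand lies in $\Wh$ and the second in its $a$-orthogonal complement, and then bound from below using the uniform discrete resolvent estimate of Lemma~\ref{bound-oper-resolv} on the first summand and $|z|\ge\mathrm{dist}(0,\partial\mathcal C)>0$ on the second (recall $0\in\sp(T)$ lies outside $\overline{\mathcal C}$). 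This yields the uniform bound on $R_z(\widehat T_h)$ directly from the (P1)-based discrete estimates, with no compactness argument at all.
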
 
\begin{proof}
It is identical to that of Lemma 11 from \cite{BMRR}.
\end{proof}

Consequently, for $h$ small enough, the spectral projector $\widehat{F}_h$
is bounded uniformly in $h$.

Now, we define
$$\gamma_h:=\delta(\mathcal{E},\Wh)\quad \textrm{and}\quad
\eta_h:=\sup\limits_{w\in\mathcal{E}}\displaystyle\frac{||w-\Pi_h w ||_{0,\O}}{|| w||_{2,\O}}.$$
From Lemmas~\ref{LEM:REG} and \ref{A} we have that 
\begin{equation}\label{dist-E-to-W_h}
\gamma_h\leq Ch^{\tilde{s}}\qquad \mbox{and }\quad \eta_h\leq Ch^2,
\end{equation}
where ${\tilde{s}}\in(1/2,1]$ is such that $\mathcal{E}\subset H^{2+\tilde{s}}(\Omega)$.

The following result establish an error estimate for the eigenfunctions.
\begin{theorem}\label{conv-eigenfunc}
If $\mathcal{E}\subset H^{2+\tilde{s}}(\O)$ with $\tilde{s}>(1/2,1]$,
there exist positive constants $h_0$ and $C$ such that, for all $h<h_0$,
$$\widehat{\delta}(\mathcal{E},\mathcal{E}_h)\leq Ch^{\tilde{s}}.$$
\end{theorem}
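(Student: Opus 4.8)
The plan is to follow the classical Babuška–Osborn-type argument as adapted in \cite{DNR2} and \cite{BMRR}, using the auxiliary operator $\widehat{T}_h=T_h\mathcal{P}_h$ on the fixed space $\Hcd$ so that the abstract gap estimate can be stated in terms of the spectral projectors $F$ and $\widehat{F}_h$. The key identity is the standard one: $\widehat{\delta}(\mathcal{E},\mathcal{E}_h)$ is controlled by $\Vert (F-\widehat{F}_h)|_{\mathcal{E}}\Vert$ (and by a reverse estimate $\Vert (F-\widehat{F}_h)|_{\mathcal{E}_h}\Vert$ which is handled analogously once $\widehat{\delta}$ is shown small so dimensions match), and from the contour integral representation
$$
F-\widehat{F}_h=\frac{1}{2\pi i}\int_{\partial\mathcal{C}}\bigl(R_z(T)-R_z(\widehat{T}_h)\bigr)\,dz,
$$
together with the resolvent identity $R_z(T)-R_z(\widehat{T}_h)=R_z(\widehat{T}_h)(T-\widehat{T}_h)R_z(T)$, one reduces everything to bounding $\Vert (T-\widehat{T}_h)w\Vert_{2,\O}$ for $w\in\mathcal{E}$, using the uniform bounds on $R_z(\widehat{T}_h)$ from Lemma~\ref{unif-bound-R_z} and the smoothness of $R_z(T)w$ (which stays in $\mathcal{E}$, hence in $H^{2+\tilde{s}}(\O)$).

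First I would write, for $w\in\mathcal{E}$,
$$
(T-\widehat{T}_h)w=(T-T_h\mathcal{P}_h)w=(T-T_h)\mathcal{P}_h w+T(w-\mathcal{P}_h w),
$$
or more conveniently split through $T_h w$ if $T_h$ were defined on $\Hcd$; since it is not, the cleaner route is to keep $\mathcal{P}_h w\in\Wh$ and invoke Lemma~\ref{lemcotste} with $f_h=\mathcal{P}_h w$. That lemma, combined with Lemma~\ref{A} (giving $\Vert\Pi_h f_h-f_h\Vert_{0,\O}\le Ch^2\Vert f_h\Vert_{2,\O}$), Proposition~\ref{app1} (for $u_\pi$), Proposition~\ref{app2} (for $u_I$), and the regularity Lemma~\ref{LEM:REG} applied to $u=T\mathcal{P}_h w$, yields $\Vert (T-T_h)\mathcal{P}_h w\Vert_{2,\O}\le Ch^{\tilde s}\Vert\mathcal{P}_h w\Vert_{2,\O}\le Ch^{\tilde s}\Vert w\Vert_{2,\O}$, using the uniform boundedness of $\mathcal{P}_h$. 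For the remaining piece $T(w-\mathcal{P}_h w)$, boundedness of $T$ and the identity $\Vert w-\mathcal{P}_h w\Vert_{2,\O}=\delta(w,\Wh)\le\gamma_h\Vert w\Vert_{2,\O}$ from \eqref{dist-E-to-W_h} give the same order $Ch^{\tilde s}$. Summing, $\Vert (T-\widehat{T}_h)w\Vert_{2,\O}\le Ch^{\tilde s}\Vert w\Vert_{2,\O}$ for all $w\in\mathcal{E}$.

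Next I would insert this into the contour integral: since $\partial\mathcal{C}$ has finite length and $R_z(T)$ maps $\mathcal{E}$ into $\mathcal{E}$ with a bound uniform in $z\in\partial\mathcal{C}$ (as $\mathcal{E}$ is the finite-dimensional eigenspace and $\partial\mathcal{C}$ avoids $\mu$), while $\Vert R_z(\widehat{T}_h)\Vert\le C$ uniformly by Lemma~\ref{unif-bound-R_z}, we obtain
$$
\Vert (F-\widehat{F}_h)|_{\mathcal{E}}\Vert
\le\frac{1}{2\pi}\,|\partial\mathcal{C}|\,\sup_{z\in\partial\mathcal{C}}\Vert R_z(\widehat{T}_h)\Vert\,\sup_{w\in\mathcal{E}}\frac{\Vert (T-\widehat{T}_h)R_z(T)w\Vert_{2,\O}}{\Vert w\Vert_{2,\O}}\,\sup_{z\in\partial\mathcal{C}}\Vert R_z(T)|_{\mathcal{E}}\Vert
\le Ch^{\tilde s}.
$$
Then, following \cite{DNR2,BMRR}, for $h$ small enough $\widehat{F}_h|_{\mathcal{E}}$ is injective and $\dim\widehat{F}_h(\mathcal{E})=\dim\mathcal{E}_h=m=\dim\mathcal{E}$, so the one-sided gap $\delta(\mathcal{E},\mathcal{E}_h)$ is bounded by $C\Vert (F-\widehat{F}_h)|_{\mathcal{E}}\Vert\le Ch^{\tilde s}$, and a symmetric argument (bounding $(T-\widehat{T}_h)$ on $\mathcal{E}_h$, again via Lemma~\ref{lemcotste} since elements of $\mathcal{E}_h\subset\Wh$ are legitimate arguments) gives $\delta(\mathcal{E}_h,\mathcal{E})\le Ch^{\tilde s}$; taking the max yields $\widehat{\delta}(\mathcal{E},\mathcal{E}_h)\le Ch^{\tilde s}$.

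The main obstacle is the variational-crime bookkeeping: unlike the abstract setting of \cite{DNR2}, here $T_h$ is built from the non-consistent discrete forms $a_h,b_h$ and is only defined on $\Wh$, so $\widehat{T}_h=T_h\mathcal{P}_h$ must be handled with care — one cannot directly write $T-\widehat{T}_h$ as a difference of operators on a common space and estimate it by a single source-problem bound, but must route each estimate through Lemma~\ref{lemcotste}, whose right-hand side already packages the consistency error $\Vert\Pi_h f_h-f_h\Vert_{0,\O}$ together with the approximation terms. Verifying that the uniform resolvent bound of Lemma~\ref{unif-bound-R_z} genuinely transfers the $O(h^{\tilde s})$ rate through the contour integral — i.e., that the constant $C$ there is independent of $h$ and $z\in\partial\mathcal{C}$ — is exactly the point where the adaptation of \cite{BMRR} is needed, and is the delicate step; the rest is a routine assembly of the lemmas already proved.
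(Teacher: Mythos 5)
Your overall framework is the same one the paper relies on: the auxiliary operator $\widehat{T}_h=T_h\mathcal{P}_h$, the spectral projectors $F$ and $\widehat{F}_h$, the contour-integral/resolvent-identity reduction together with the uniform bound of Lemma~\ref{unif-bound-R_z}, and the standard dimension/invertibility argument from \cite{DNR2} for the second one-sided gap. The genuine gap is in your key estimate of $\left\|(T-T_h)\mathcal{P}_h w\right\|_{2,\O}$. You invoke Lemma~\ref{lemcotste} with $f_h=\mathcal{P}_h w$ and then claim that Lemma~\ref{LEM:REG} applied to $u=T\mathcal{P}_h w$ gives the rate $h^{\tilde{s}}$. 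But $\mathcal{P}_h w$ is not an eigenfunction, so $u=T\mathcal{P}_h w\notin\mathcal{E}$, and Lemma~\ref{LEM:REG} only provides $u\in H^{2+s}(\O)$ with $s$ the domain-dependent source-regularity exponent; Propositions~\ref{app1} and~\ref{app2} then give $\left\|u-u_I\right\|_{2,\O}+\left|u-u_\pi\right|_{2,h}\le Ch^{s}\left\|\mathcal{P}_h w\right\|_{0,\O}$, i.e.\ you have simply reproved property (P1) and obtain $\left\|(T-T_h)\mathcal{P}_h w\right\|_{2,\O}\le Ch^{s}\left\|w\right\|_{2,\O}$. Hence your argument as written yields $\widehat{\delta}(\mathcal{E},\mathcal{E}_h)\le Ch^{\min\{s,\tilde{s}\}}$, which falls short of the stated $Ch^{\tilde{s}}$ precisely when the eigenspace is smoother than the worst-case source solution, $\tilde{s}>s$. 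This case is not academic: for the paper's L-shaped example, $s<1$ while several eigenspaces are regular enough that the observed eigenvalue orders are essentially $2$ (i.e.\ $\tilde{s}\approx 1$), so the whole point of stating the theorem with the eigenspace exponent $\tilde{s}$ is that the rate must be driven by the regularity of $\mathcal{E}$ alone. The paper avoids this by bounding $\left\|(T-\widehat{T}_h)f\right\|_{2,\O}$ for $f\in\mathcal{E}$ directly through a Strang-type consistency argument in which the only approximation quantities are $\gamma_h=\delta(\mathcal{E},\Wh)$ and $\eta_h$, both measured on eigenfunctions, and then applies Theorem~1 of \cite{DNR2} together with \eqref{dist-E-to-W_h}; elliptic regularity of the auxiliary source problem never enters.

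Your route can be repaired without changing its structure: inside Lemma~\ref{lemcotste} decompose $u=T\mathcal{P}_h w=\mu w+T(\mathcal{P}_h w-w)$; approximate $\mu w\in H^{2+\tilde{s}}(\O)$ with rate $h^{\tilde{s}}$ (using Propositions~\ref{app1}--\ref{app2} and equivalence of norms on the finite-dimensional space $\mathcal{E}$), and approximate $T(\mathcal{P}_h w-w)$, which by Lemma~\ref{LEM:REG} lies in $H^{2+s}(\O)$ with datum of size $\left\|\mathcal{P}_h w-w\right\|_{0,\O}\le C\gamma_h\le Ch^{\tilde{s}}$, so that this piece contributes $O(h^{s+\tilde{s}})$, a higher-order term. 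With that modification (the second term $T(w-\mathcal{P}_h w)$ and the consistency term $\left\|\Pi_h\mathcal{P}_h w-\mathcal{P}_h w\right\|_{0,\O}\le Ch^2$ you already handle correctly), your assembly through the contour integral and the gap symmetrization is sound and matches \cite{DNR2,BMRR}.
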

\begin{proof}
It follows by arguing exactly as in the proof of Theorem 1 from \cite{DNR2}
and using \eqref{dist-E-to-W_h}.
\end{proof}

Finally, we have the following result that
provides an error estimate for the eigenvalues.

\begin{theorem}
 There exist positive constants $C$ and
 $h_0$ independent of $h$, such that,
 for all $h<h_0$,
\begin{equation*}
 \left|\l-\l_h^{(i)}\right|
 \le Ch^{2\tilde{s}},\qquad i=1,\ldots,m,
\end{equation*}
where ${\tilde{s}}\in(1/2,1]$ is such that $\mathcal{E}\subset H^{2+\tilde{s}}(\Omega)$.
\end{theorem}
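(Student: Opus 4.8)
The plan is to derive the double-order estimate for the eigenvalues from the eigenfunction estimate of Theorem~\ref{conv-eigenfunc} together with the standard identity that expresses the eigenvalue error as a ratio of two quadratic forms, adapting the argument of Theorem~2 in \cite{DNR2} to account for the nonconformity of the bilinear forms $a_h(\cdot,\cdot)$ and $b_h(\cdot,\cdot)$. Since $\mu=1/\lambda$ and $\mu_h^{(i)}=1/\lambda_h^{(i)}$, it suffices to prove $|\mu-\mu_h^{(i)}|\le Ch^{2\tilde s}$ and then pass back to the $\lambda$'s using that the $\mu_h^{(i)}$ are bounded away from $0$ for $h$ small. Throughout I would fix a unit eigenfunction $w_h\in\mathcal{E}_h$ associated with $\mu_h^{(i)}$ and let $w:=\widehat F_h$-preimage / spectral projection of $w_h$ onto $\mathcal{E}$; by Theorem~\ref{conv-eigenfunc} one has $\|w-w_h\|_{2,\O}\le Ch^{\tilde s}\|w_h\|_{2,\O}$ after suitable normalization.

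The key computation is the algebraic identity
\begin{align*}
a_h(w_h,w_h)-\lambda\, b_h(w_h,w_h)
&= a_h(w_h-w,w_h-w)-\lambda\, b_h(w_h-w,w_h-w)\\
&\quad+\bigl[a_h(w,w)-a(w,w)\bigr]-\lambda\bigl[b_h(w,w)-b(w,w)\bigr],
\end{align*}
where I have used $a(w,w)=\lambda\, b(w,w)$, $a(w,w_h-w)=\lambda\, b(w,w_h-w)$ together with symmetry of the forms. Dividing by $b_h(w_h,w_h)$ (which is bounded below by a constant thanks to the stability estimate \eqref{stab-b} and the normalization) and noting that $a_h(w_h,w_h)/b_h(w_h,w_h)=\lambda_h^{(i)}$, this rewrites $\lambda_h^{(i)}-\lambda$ as the sum of three quotients. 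The first bracket is $O(\|w_h-w\|_{2,\O}^2)=O(h^{2\tilde s})$ by continuity of $a_h$ and $b_h$ and Theorem~\ref{conv-eigenfunc}. It remains to bound the consistency terms $a_h(w,w)-a(w,w)$ and $b_h(w,w)-b(w,w)$ for the \emph{continuous} eigenfunction $w\in\mathcal{E}\subset H^{2+\tilde s}(\O)$.

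For the consistency terms I would work element by element and insert the polynomial projections. For $a_h$, using \eqref{consis-a} with $p=w_\pi\in\P_2(\E)$ from Proposition~\ref{app1}, the stability \eqref{stab-a}, and the Pythagoras-type manipulation
$$
a_{h,\E}(w,w)-a_\E(w,w)=a_{h,\E}(w-w_\pi,w-w_\pi)-a_\E(w-w_\pi,w-w_\pi)+2\bigl[a_{h,\E}(w_\pi,w)-a_\E(w_\pi,w)\bigr],
$$
the last bracket vanishes by consistency, and the first two terms are each $\le C|w-w_\pi|_{2,\E}^2\le Ch_\E^{2\tilde s}|w|_{2+\tilde s,\E}^2$ by Proposition~\ref{app1}; summing over $\E$ gives $|a_h(w,w)-a(w,w)|\le Ch^{2\tilde s}\|w\|_{2+\tilde s,\O}^2$. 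The term $b_h(w,w)-b(w,w)$ is handled identically with \eqref{consis-b}, \eqref{stab-b} and the $L^2$-estimate $\|w-w_\pi\|_{0,\E}\le Ch_\E^{2+\tilde s}|w|_{2+\tilde s,\E}$ from Proposition~\ref{app1}, which in fact yields the even smaller bound $O(h^{4+2\tilde s})$; either way it is $O(h^{2\tilde s})$. Collecting the three contributions and using Lemma~\ref{LEM:REG} to control $\|w\|_{2+\tilde s,\O}$ by $\|w\|_{2,\O}$ gives $|\lambda-\lambda_h^{(i)}|\le Ch^{2\tilde s}$.

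The main obstacle I anticipate is not any single estimate but the bookkeeping required to make the spectral-projection argument rigorous in the nonconforming setting: one must carefully choose the representative $w\in\mathcal{E}$ of $w_h$ (via $F\widehat F_h$ or an analogous operator), verify it is close to $w_h$ in $\|\cdot\|_{2,\O}$ with the rate $h^{\tilde s}$ using Theorem~\ref{conv-eigenfunc} and the uniform boundedness of the spectral projectors (Lemma~\ref{unif-bound-R_z} and the remark following it), and confirm that the normalizations $\|w_h\|_{2,\O}=1$ and $b_h(w_h,w_h)\sim 1$ are mutually compatible. This is exactly the step where the approach of \cite{BMRR} replaces the direct application of \cite{DNR2}, so I would follow \cite[Section~4]{BMRR} closely for that part and treat the per-element consistency bounds above as the genuinely new, but routine, ingredient.
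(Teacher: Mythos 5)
Your overall strategy (eigenfunction estimate from Theorem~\ref{conv-eigenfunc} plus an algebraic identity whose right-hand side collects an $O(h^{2\tilde s})$ quadratic remainder and two consistency errors) is the same as the paper's, but your particular decomposition has a genuine gap. You place the consistency errors on the \emph{continuous} eigenfunction, i.e.\ you need to bound $a_h(w,w)-a(w,w)$ and $b_h(w,w)-b(w,w)$ for $w\in\mathcal{E}$. These quantities are not controlled by anything in the paper: the forms $a_{h,\E}$ and $b_{h,\E}$ in \eqref{locforma2} involve the stabilizations $s_\E$, $s_\E^0$, whose definition and stability bounds \eqref{term-stab-SK}--\eqref{term-stab-SK0} are given only on $\WK$, and the consistency property \eqref{consis-a} reads $a_{h,\E}(p,v_h)=a_\E(p,v_h)$ for $v_h\in\WK$ --- so your step ``the last bracket vanishes by consistency'' applies \eqref{consis-a} with second argument $w\notin\WK$, which is not licensed. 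This is exactly the variational crime the paper flags when it notes that $T_h$ is not defined on all of $\HdoO$. In addition, your algebraic identity is not correct as written: expanding $a_h(w_h-w,w_h-w)-\l b_h(w_h-w,w_h-w)$ leaves a cross term $2\bigl[a_h(w,w_h-w)-\l\, b_h(w,w_h-w)\bigr]$ which does not vanish, because the relation $a(w,v)=\l b(w,v)$ holds for the continuous forms, not for $a_h,b_h$. (That term is again a consistency error at $w$, so even if you tracked it you would face the same problem as above.)

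The fix --- and what the paper actually does in identity \eqref{45} --- is to arrange the decomposition so that every discrete form is evaluated only at the discrete eigenfunction: the consistency errors appear as $a_h(w_h,w_h)-a(w_h,w_h)$ and $b_h(w_h,w_h)-b(w_h,w_h)$ with $w_h\in\Wh$, and the polynomial $w_\pi$ approximating $w$ (Proposition~\ref{app1}) is inserted through the \emph{first} argument via \eqref{consis-a}, e.g.
\begin{equation*}
a_h(w_h,w_h)-a(w_h,w_h)=\sum_{\E\in\CT_h}\bigl\{a_{h,\E}(w_h-w_\pi,w_h)-a_\E(w_h-w_\pi,w_h)\bigr\}
\le C\sum_{\E\in\CT_h}|w_h-w_\pi|_{2,\E}^2,
\end{equation*}
which is then $O(h^{2\tilde s})$ by the triangle inequality through $w$, Proposition~\ref{app1} and $\|w-w_h\|_{2,\O}\le Ch^{\tilde s}$. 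The remaining ingredients of your write-up (normalization, lower bound on $b(w_h,w_h)$ via $1=\|w_h\|_{2,\O}^2\le\l_h^{(i)}b_h(w_h,w_h)$, regularity of $w$) match the paper, but as it stands the central estimate of your argument rests on properties of $a_h,b_h$ outside $\Wh$ that are not available.
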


\begin{proof}
Let $w_h$ be such that $(\l_h^{(i)},w_h)$ is a solution of
\eqref{P11} with $\left\| w_h\right\|_{2,\O}=1$. According to
Theorem~\ref{conv-eigenfunc}, $\delta(w_{h},\boldsymbol{\CE})\leq Ch^{\tilde{s}}$.
It follows that there exists $(\l,w)$ eigenpair solution of \eqref{vibrapri}
such that 
\begin{equation}\label{fin4}
\left\|w-w_h\right\|_{2,\O}\le Ch^{\tilde{s}}.
\end{equation}
From the symmetry of the bilinear forms and the facts that 
$a(w,v)=\l b(w,v)$ for all $v\in\Hcd$ (cf. \eqref{vibrapri}) and 
$a_{h}( w_h, v_h)=\l_h^{(i)}b_h(w_h,v_h)$ for all $v_h\in \Wh$ (cf.
\eqref{P11}), we have
\begin{align*}
a(w-w_{h},w-w_{h})&-\l b(w-w_{h},w-w_{h})
=a(w_{h},w_{h})-\l b(w_{h},w_{h})\\
& =a(w_{h},w_{h})-a_{h}(w_{h},w_{h})+\l_{h}^{(i)}b_{h}(w_{h},w_{h})-\l b(w_{h},w_{h})\\
& =a(w_{h},w_{h})-a_{h}(w_{h},w_{h})+\l_{h}^{(i)}\left[b_{h}(w_{h},w_{h})-b(w_{h},w_{h})\right]+(\l_{h}^{(i)}-\l )b(w_{h},w_{h})
\end{align*}
from which we obtain the following identity:
\begin{align}
\nonumber
(\l_{h}^{(i)}-\l )b(w_{h},w_{h})&=a(w-w_{h},w-w_{h})-\l b(w-w_{h},w-w_{h})\\\label{45}
&\quad+(a_{h}(w_{h},w_{h})-a(w_{h},w_{h}))-\l_{h}^{(i)}\left[b_{h}(w_{h},w_{h})-b(w_{h},w_{h})\right].
\end{align}
The next step is to bound each term on the right hand side above. The
first and the second ones are easily bounded using the Cauchy-Schwarz inequality 
and \eqref{fin4}:
\begin{equation}
\label{terms_12}
\left|a(w-w_{h},w-w_{h})-\l b(w-w_{h},w-w_{h})\right|
\le  C \left(|w-w_{h}|_{2,\O}^2+\|w-w_{h}\|_{0,\O}^2\right)\le Ch^{2\tilde{s}}.
\end{equation}

For the third term, for $w\in\boldsymbol{\CE}$, we consider $w_{\pi}\in\LO$
defined on each $\E\in\CT_h$ so that $w_{\pi}|_{\E}\in\P_2(\E)$
and the estimate of Proposition~\ref{app1} holds true.
Then, we use \eqref{consis-a} and \eqref{stab-a}  to write
\begin{align*}
|a_h(w_h,w_h)-a(w_h,w_h)|
&=\Big|\sum\limits_{K\in \mathcal{T}_h}\Big\{a_{h,K}(w_h-w_{\pi},w_h) -a_{K}(w_h-w_{\pi},w_h)\Big\}\Big|\nonumber\\
&\leq  \sum\limits_{K\in \mathcal{T}_h} (1+\alpha_2)a_K(w_h-w_{\pi},w_h-w_{\pi})\nonumber\\
& \leq C \sum\limits_{K\in \mathcal{T}_h}|w_h-w_{\pi}|_{2,K}^2. 
\end{align*}
Then, adding and subtracting $w$, using triangular inequality, Proposition~\ref{app1},
and \eqref{fin4}, we obtain
\begin{equation}
 \label{terms_{12}}
\left|a_h(w_h,w_h)-a(w_h,w_h)\right|
\le Ch^{2\tilde{s}}.
 \end{equation}

For the last term in \eqref{45}, using that $\PiK$ is also the $L^2$-projector (see \eqref{L2proj}), we obtain
\begin{align*}
 \left|b_h(w_h,w_h)-b(w_h,w_h)\right|&\leq C\sum_{\E\in\CT_h}\|w_h-\PiK  w_h\|_{0,\E}^{2}\\
 &\leq C\sum_{\E\in\CT_h}\|w_h-  w_\pi\|_{0,\E}^{2}\\
 &\leq C\sum_{\E\in\CT_h}(\|w-w_\pi\|_{0,\E}^{2}+\|w-  w_{h}\|_{0,\E}^{2})\le Ch^{2\tilde{s}},
\end{align*}
where we have used again Proposition~\ref{app1} and \eqref{fin4}.

On the other hand,
\begin{align*}
1=\|w_h\|_{2,\O}^2\leq\l_h^{(i)}b_h(w_h,w_h)\leq \l_h^{(i)}C\|w_h\|_{0,\O}^2, 
\end{align*}
thus, the theorem follows from
from \eqref{45}--\eqref{terms_{12}} and the inequalities above.
%
\end{proof}




\setcounter{equation}{0}
\section{Numerical results}
\label{SEC:NUMER}

We report in this section a couple of tests which have allowed us to
assess the theoretical results proved above. With this aim, we have
implemented in a MATLAB code the proposed VEM on arbitrary
polygonal meshes, by following the ideas presented in \cite{BBMR2014}.

Now, to complete the choice of the VEM, we had to fix the bilinear
forms $s_{\E}(\cdot,\cdot)$ and $s_\E^0(\cdot,\cdot)$ satisfying
\eqref{term-stab-SK} and \eqref{term-stab-SK0}, respectively.
Proceeding as in \cite{BBCMMR2013}, a natural choice
for $s_{\E}(\cdot,\cdot)$ is given by
\begin{align}
s_\E(u_h,v_h):=\sigma_{\E}h_K^{-2}\sum\limits_{i=1}^{N_\E}[u_h(P_i)v_h(P_i)
+h_{P_i}^2\nabla u_h(P_i)\cdot\nabla v_h(P_i)] & \quad \forall u_h,v_h\in W_h^{K},\nonumber
\end{align}
where $P_1,\ldots,P_{N_{\E}}$ are the vertices of $\E$,
$h_{P_i}$ corresponds to the maximun diameter of the elements with $P_i$ as a vertex
and $\sigma_\E>0$ is a multiplicative factor to take into account the magnitude
of the material parameter, for instance, in the numerical tests a possible
choice could be to set $\sigma_\E>0$ as the mean value of
the eigenvalues of the local matrix $a_{\E}\big(\PiK u_h,\PiK v_h\big)$.
This ensure that the stabilizing term scales as $a_{\E}(v_h,v_h)$.
Now, a choice for $s_\E^0(\cdot,\cdot)$ is given by

\begin{align*}
s_{\E}^0(u_h,v_h):=\sigma_{\E}^{0}h_K^2\sum\limits_{i=1}^{N_K}[u_h(P_i)v_h(P_i)
+h_{P_i}^2\nabla u_h(P_i)\cdot \nabla v_h(P_i)] & \quad \forall u_h,v_h\in W_h^{K}.\label{chose-sK0}
\end{align*}
In this case, we have multiplied the stabilizing
term by the parameter $\sigma_{\E}^{0}>0$, where in this case,
a possible choice could be to set $\sigma_\E^{0}>0$ as the mean value of
the eigenvalues of the local matrix $b_{\E}\big(\PiK u_h,\PiK v_h\big)$
to ensure \eqref{term-stab-SK0}. A proof of \eqref{term-stab-SK} and \eqref{term-stab-SK0}
for the above (standard) choices could be derived following the arguments in \cite{BLRXX}
(see also \cite{ABSVsinum16}).
Finally, we mention that the above definitions of the bilinear
forms $s_{\E}(\cdot,\cdot)$ and $s_\E^0(\cdot,\cdot)$
are according with the analysis presented
in \cite{MRR2015} in order to avoid spectral pollution.



We have tested the method by using
different families of meshes
(see Figure~\ref{FIG:VM1}):
\begin{itemize}
\item $\CT_h^1$: rectangular meshes;   
\item $\CT_h^2$: hexagonal meshes;
\item $\CT_h^3$: non-structured hexagonal meshes made of convex hexagons;
\item $\CT_h^4$: trapezoidal meshes which consist
of partitions of the domain into $N\times N$ congruent
trapezoids, all similar to the trapezoid with
vertices $(0,0)$, $(\dfrac{1}{2},0)$, $(\dfrac{1}{2},\dfrac{2}{3})$ and $(0,\dfrac{1}{3})$. 
\end{itemize}

The refinement parameter $N$ used to label each mesh is the number of elements
on each edge of the plate.


\begin{figure}[H]
\begin{center}
\begin{minipage}{6.3cm}
\centering\includegraphics[height=6.3cm, width=6.3cm]{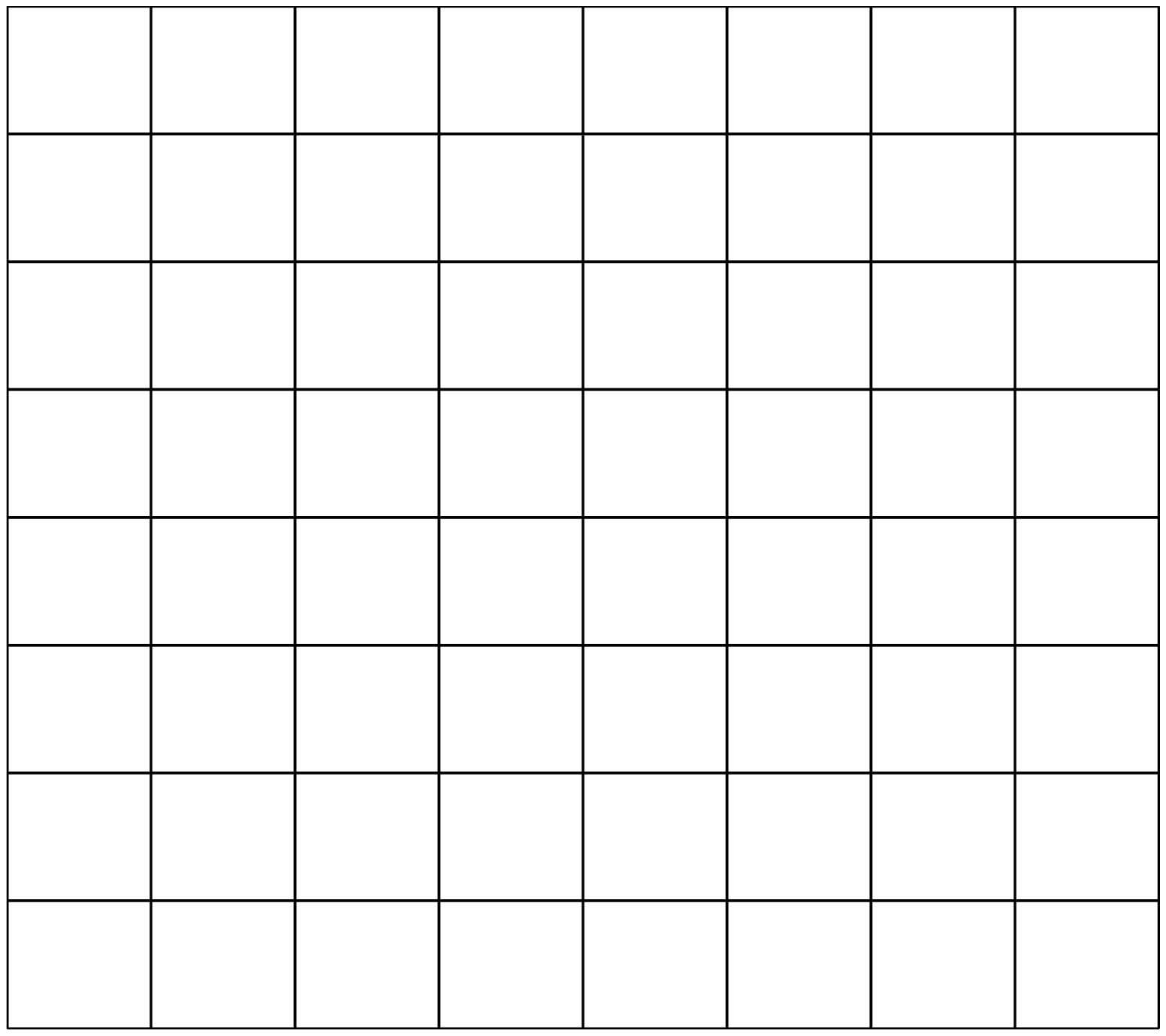}
\end{minipage}
\begin{minipage}{6.3cm}
\centering\includegraphics[height=6.3cm, width=6.3cm]{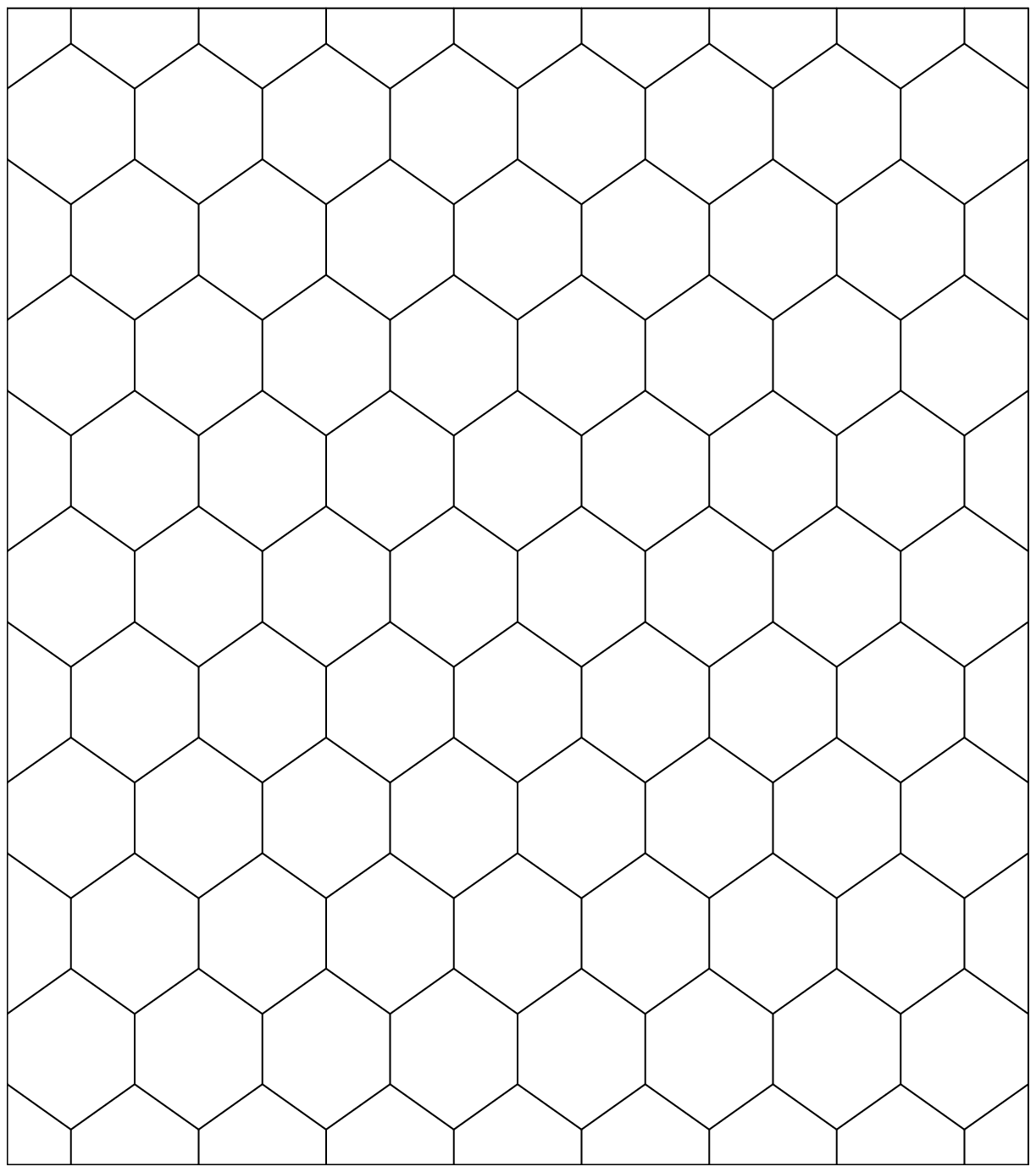}
\end{minipage}
\begin{minipage}{6.3cm}
\centering\includegraphics[height=6.3cm, width=6.3cm]{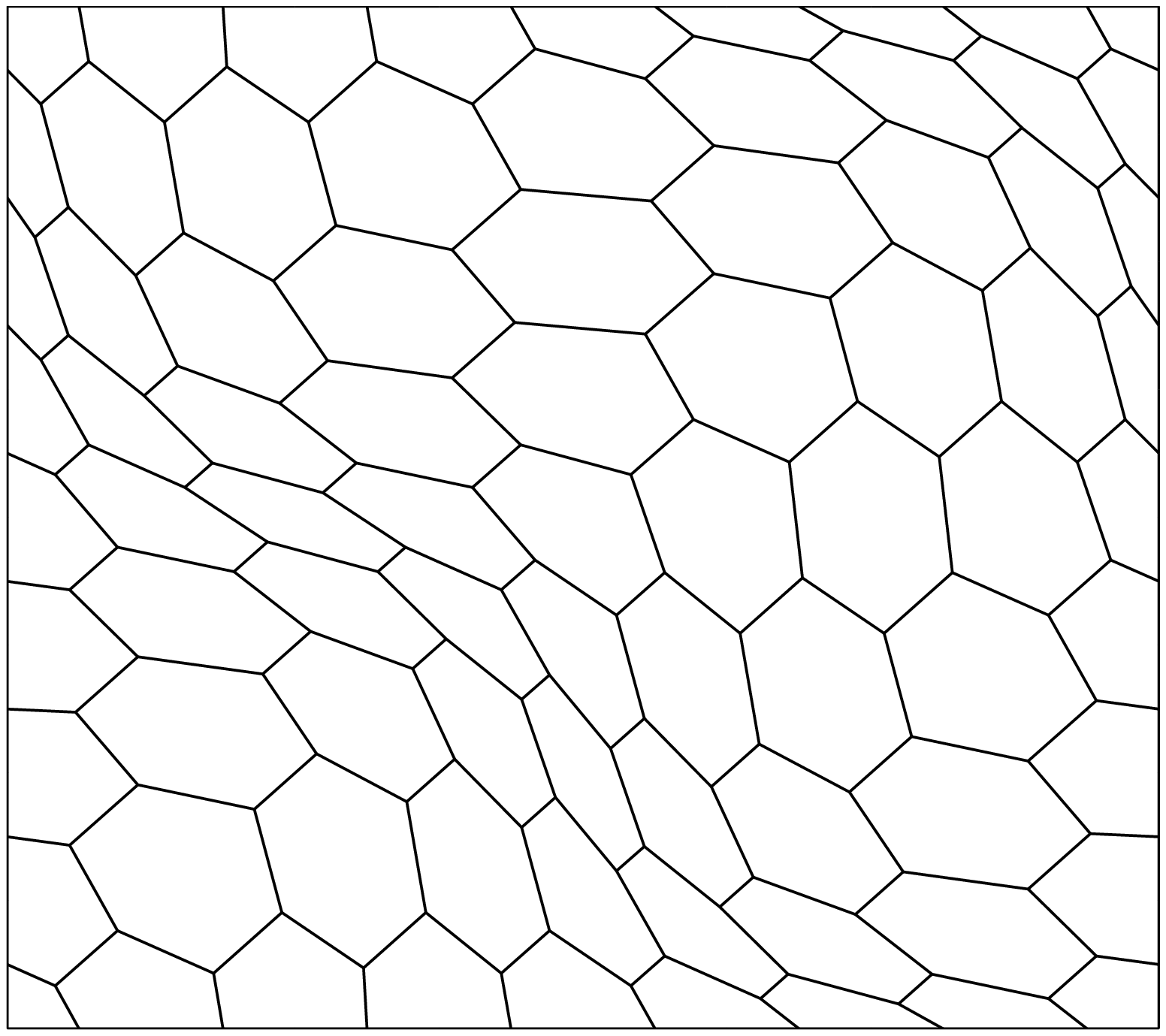}
\end{minipage}
\begin{minipage}{6.3cm}
\centering\includegraphics[height=6.3cm, width=6.3cm]{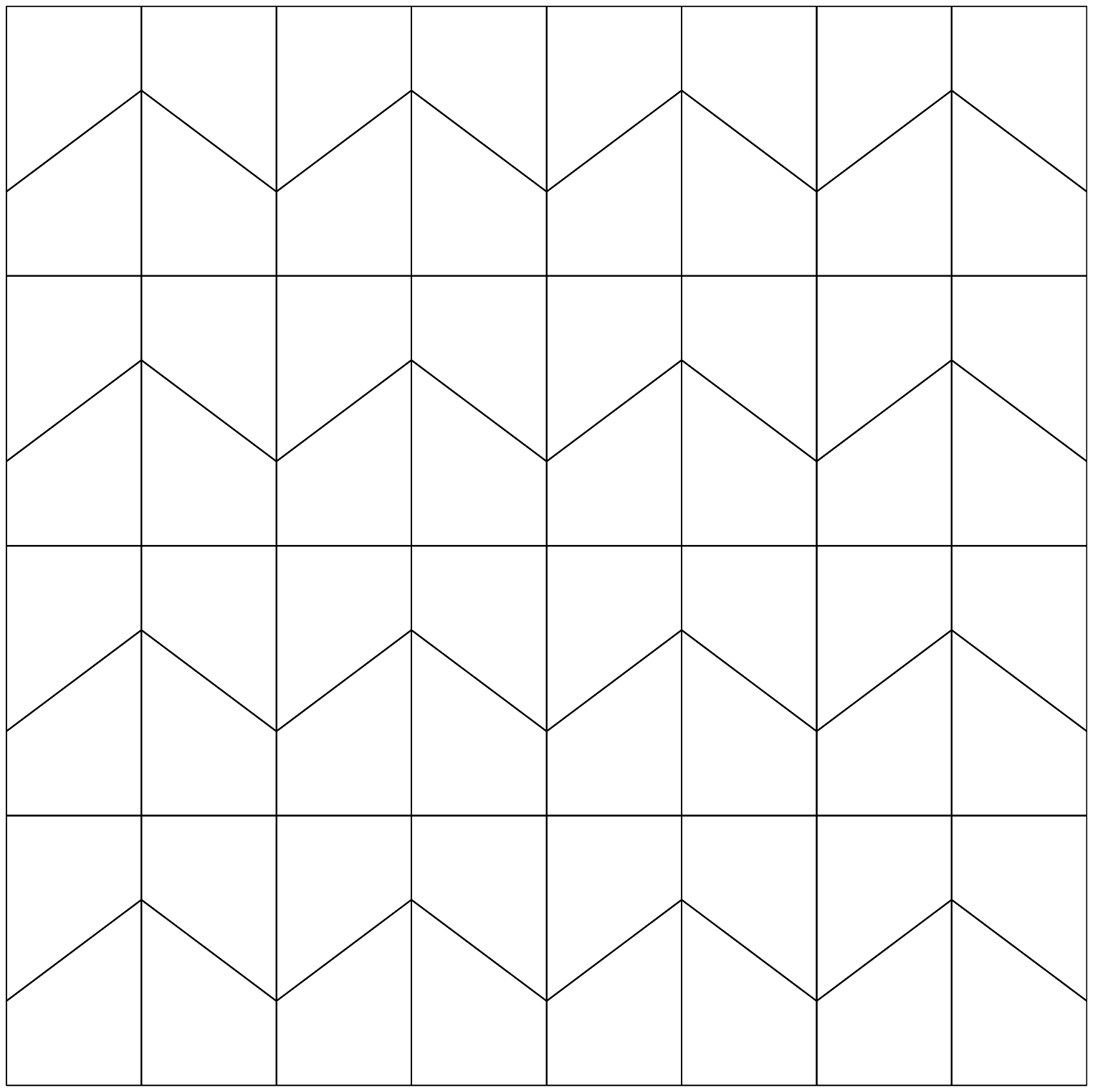}
\end{minipage}
\caption{ Sample meshes: $\CT_h^1$ (top left), $\CT_h^2$ (top right),
$\CT_h^3$ (bottom left) and $\CT_h^{4}$ (bottom right), for $N=8$.}
 \label{FIG:VM1}
\end{center}
\end{figure}

\subsection{Simply supported plate}

First, we have considered a simply supported plate,
because analytical solutions are available in this
case (see \cite{ALR05,BO}). Even though our theoretical
analysis has been developed only for clamped plates, we think that
the results of the previous sections should hold true
for more general boundary conditions as well. The results that
follow give some numerical evidence of this.
For the computations we took $\Omega:=(0, 1)^2$.

In Table~\ref{TAB:1} we report the four lowest eigenvalues
($\lambda_i,$ $i = 1, 2, 3, 4$) computed
by our method with two different family of meshes
and $N = 32, 64, 128$ for a simply supported
plate. The table includes computed orders of convergence,
as well as more accurate values extrapolated by means of a least-squares fitting.
The last column shows the exact eigenvalues.
It can be seen from Table~\ref{TAB:1} that the method converges
to the exact values with an optimal quadratic order.
Notice that, for the $\CT_h^1$ meshes,
the second computed eigenvalue is double, because the meshes
preserve the symmetry of the domain leading to an eigenvalue
of multiplicity 2 in the continuous problem.


\begin{table}[ht]
\caption{Lowest vibration frequencies of a simply supported
square plate computed on different meshes with the method analyzed in
this paper.}
\label{TAB:1}
\begin{center}
{\small\begin{tabular}{cccccccc}
\hline
 & Mesh & $N=32$ & $N=64$ & $N=128$  & Order & Extrapolated &Exact\\
\hline
  $\l_1$     &   & 390.0184 & 389.7307 & 389.6599 & 2.02 & 389.6366& 389.6364\\
  $\l_2$ & $\CT_h^1$&2430.2171 &2433.9024 &2434.8914 & 1.90 &2435.2523& 2435.2273\\
  $\l_3$     &   &2430.2171 &2433.9024 &2434.8914  & 1.90 &2435.2523 &2435.2273\\
  $\l_4$     &   & 6259.8318 & 6240.2949 & 6235.6906  & 2.09 & 6234.2872& 6234.1818\\
\hline
  $\l_1$     &   & 389.0957  & 389.4908 & 389.5987 & 1.87 & 389.6395& 389.63634\\
  $\l_2$     & $\CT_h^2$  &2412.1885 &2429.0389 &2433.6393 & 1.87 &2435.3783& 2435.2273\\
  $\l_3$     &   &2433.8095 &2434.8277 &2435.1197  & 1.80 &2435.2376 &2435.2273\\
  $\l_4$     &   & 6199.2905 & 6224.8431 & 6231.7684 & 1.88 & 6234.3634& 6234.1818\\
\hline
\end{tabular}}
\end{center}
\end{table}

\subsection{Clamped plate}

In this numerical test we took $\Omega:=(0, 1)^2$
and considered clamped boundary condition on the whole of $\partial\Omega$.
We present numerical experiments which confirm the theoretical
results proved above. 

Table~\ref{TAB:2} shows the four lowest vibration frequencies
computed with successively refined meshes of each type for
a clamped plate. The table includes orders of convergence,
as well as accurate values extrapolated by means
of a least-squares fitting.
Moreover, we compare the performance of the proposed
method with the one presented in \cite{MoRo2009}
with a mixed formulation for solving the plate vibration problem
and a Galerkin method based on
piecewise linear and continuous finite elements.
With this aim, we include in the last column of
Table~\ref{TAB:2} the values obtained by extrapolating
those computed with method in \cite{MoRo2009}
on uniform triangular meshes
as those shown in Figure~\ref{FIG:L_MESH},
for the same problem.

\begin{figure}[ht]
\begin{center}
\begin{minipage}{6.3cm} 
\centering\includegraphics[height=6.3cm, angle=-90]{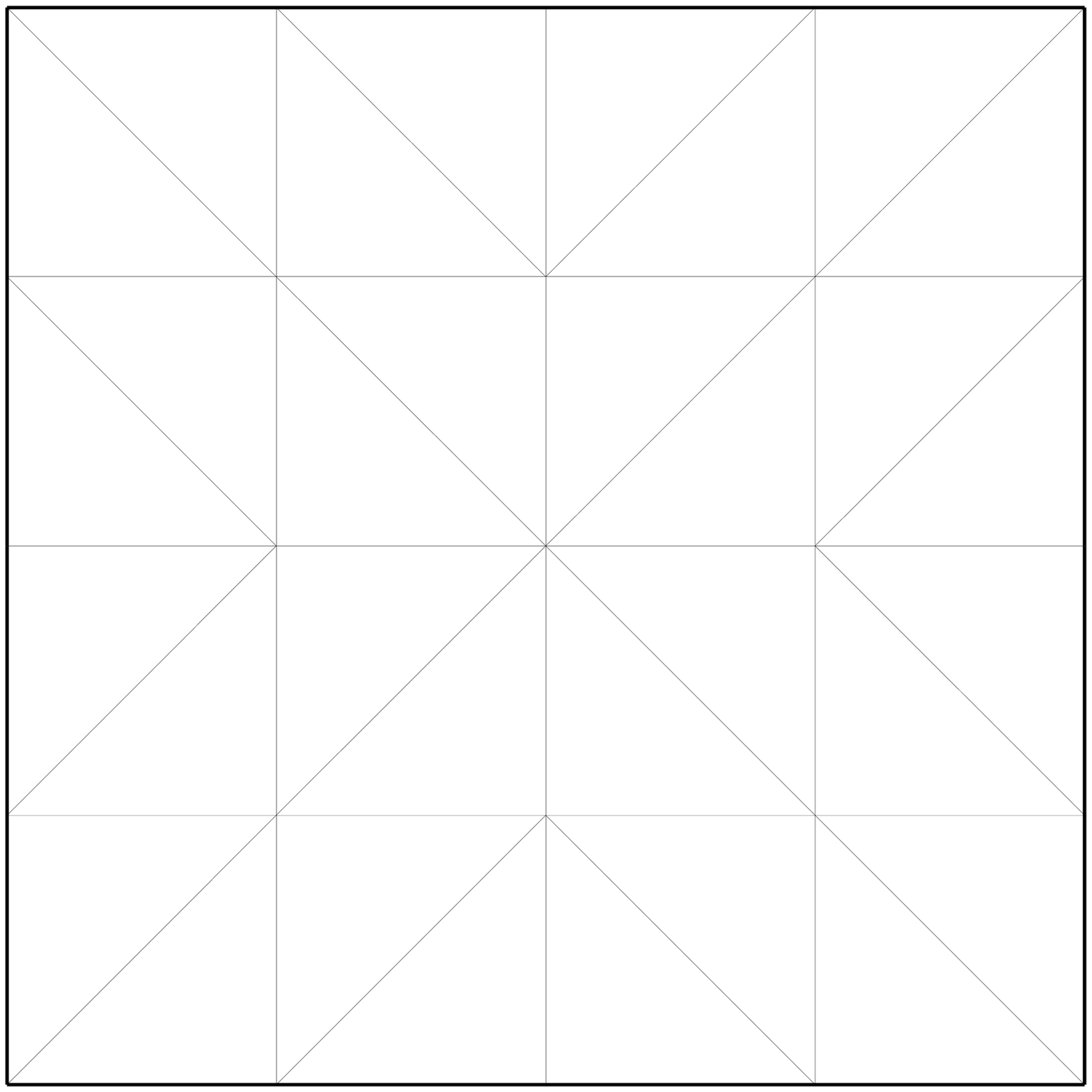}
\centering{$N=4$} 
\end{minipage}
\begin{minipage}{6.3cm}
\centering\includegraphics[height=6.3cm, angle=-90]{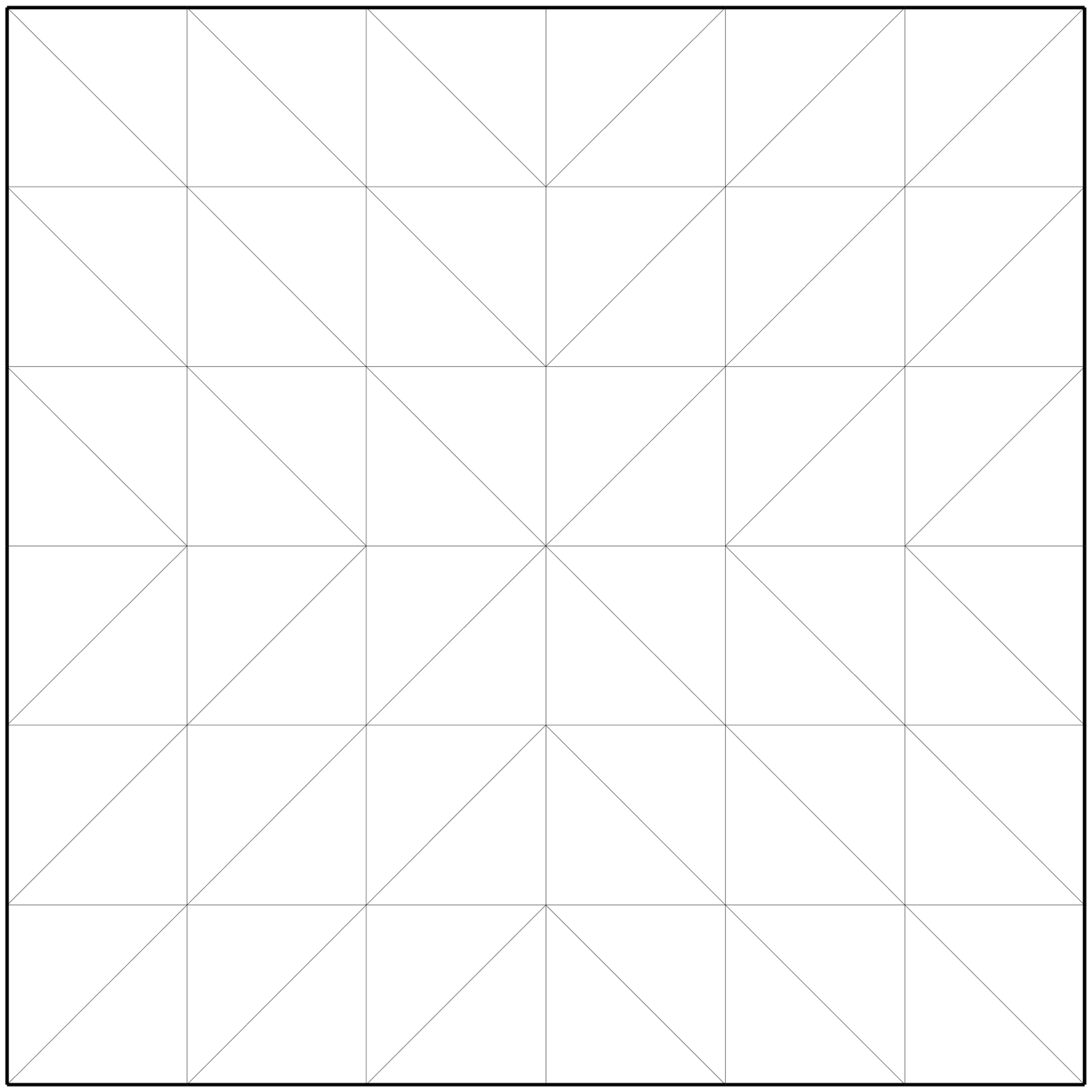}
\centering{$N=6$}
\end{minipage}
\caption{Uniform meshes.}
\label{FIG:L_MESH}
\end{center}
\end{figure}


\begin{table}[ht]
\caption{Lowest vibration frequencies of a clamped
square plate computed on different meshes with the VEM method analyzed in
this paper and the one in \cite{MoRo2009}.}
\label{TAB:2}
\begin{center}
{\small\begin{tabular}{cccccccc}
\hline
 & Mesh & $N=32$ & $N=64$ & $N=128$ & Order & Extrapolated& \cite{MoRo2009}\\
\hline
 $\l_1$       &   & 1283.2286 & 1291.5607 & 1294.0225 & 1.76 & 1295.0526&1294.9369 \\
 $\l_2$       & $\CT_h^3$ &5268.2854 &5353.1383 &5377.7322 &1.79 & 5387.6973&5386.6675 \\
 $\l_3$      &   &5326.6504 &5368.5269 &5381.6191 &1.68 & 5387.5416& 5386.6675\\
 $\l_4$      &   &11406.3068 &11622.9583 &11686.8981 &1.76 & 11713.7035&11710.9076\\
\hline
 $\l_1$       &   & 1289.7221 & 1293.6088 & 1294.6010 & 1.97 & 1294.9410& 1294.9369\\
 $\l_2$       & $\CT_h^4$ &5318.4039 &5368.9773 &5382.1939 &1.94 & 5386.8279& 5386.6675\\
  $\l_3$      &   &5351.6510 &5377.6743 &5384.3950 &1.95 & 5386.7517 &5386.6675\\
  $\l_4$      &   &11664.9586 &11698.2652 &11707.5973 &1.84 & 11711.1942& 11710.9076\\
\hline
\end{tabular}}
\end{center}
\end{table}

It is clear that the eigenvalue approximation order of
our method is quadratic and that the results
obtained by the two methods agree perfectly well.


\subsection{L-shaped plate}

Finally, we present two numerical experiments
which confirm the theoretical results proved above.
We have computed the vibration frequencies
of an L-shaped plate: $\O:=(0,1)\times(0,1)\setminus[0.5,1)\times[0.5,1)$.

In the first test we considered clamped boundary condition
on the whole of $\partial\Omega$ and
we have used uniform triangular meshes as
those shown in Figure~\ref{FIG:VM37}.
Once again, we compare the performance of the proposed
method with the one presented in \cite{MoRo2009}.

We report in Table~\ref{TAB:4} the four lowest vibration
frequencies computed with the method analyzed in this paper.
The table includes orders of convergence,
as well as accurate values extrapolated by means
of a least-squares fitting.
The last column shows the values obtained by extrapolating
those computed with method in \cite{MoRo2009}
on the same uniform triangular meshes.


In this case, for the first vibration frequency, the method converges
with order close to $1.28$, which is the expected one because of the
singularity of the solution (see \cite{G}). Instead, the method
converges with larger orders for the second, third and fourth
vibration frequencies.

\begin{table}[ht]
\caption{Lowest vibration frequencies of an L-shaped clamped plate
computed on uniform triangular meshes with the VEM method analyzed in this paper
and the one in \cite{MoRo2009}.}
\label{TAB:4}
\begin{center}
{\small\begin{tabular}{rrrrrrr}
\hline
 & $N=32$ & $N=64$ & $N=128$ & Order & Extrapolated &\cite{MoRo2009}\\
\hline
$\l_1$&   6827.5421&    6753.6207&    6725.1315 & 1.28& 6707.4264&6704.2982\\
$\l_2$&  11128.5787&   11073.4576&  11059.3867&  1.97& 11054.5647&11055.5189\\
$\l_3$&  14989.9367&  14926.5156&   14910.6489&     2.00&  14905.3676&14907.0816\\
$\l_4$&  26325.7078&  26195.9206&   26163.4597&     2.00& 26152.6488&26157.9673\\
\hline
\end{tabular}}
\end{center}
\end{table}

In this case, we mention the following advantages of the proposed VEM method:
the computational cost of our method
is smaller than the method studied in \cite{MoRo2009}.
In fact, the number of unknowns for our VEM method is,
3$N_v$, where $N_v$ denotes the number of vertices,
whereas in \cite{MoRo2009} is 4$N_v$.
Moreover, in this case, the eigenvalue problem to be solved
is much simpler than the one arising from the formulation studied
in \cite{MoRo2009}. In fact, the latter leads to a degenerate
generalized matrix eigenvalue
problem, which is shown to be well posed in \cite[Appendix]{MoRo2009}
but that cannot be solved with standard eigensolvers.

We show in Figure~\ref{FIG:VM37}
the eigenfunctions corresponding to the four lowest eigenvalues
for an L-shaped clamped plate.

\begin{figure}[H]
\begin{center}
\begin{minipage}{6.3cm}
\centering\includegraphics[height=6cm, width=6.9cm]{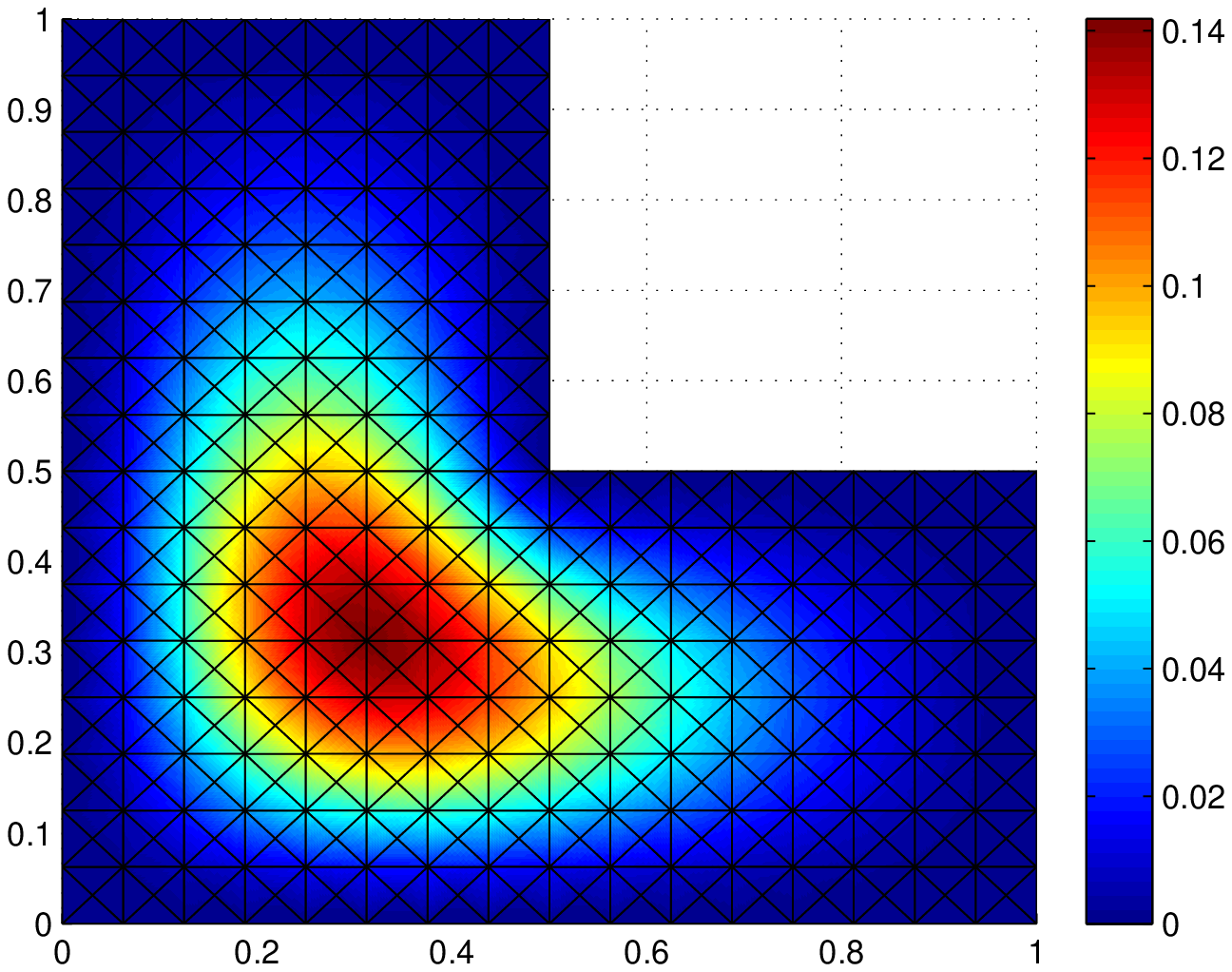}
\end{minipage}
\begin{minipage}{6.3cm}
\centering\includegraphics[height=6cm, width=6.9cm]{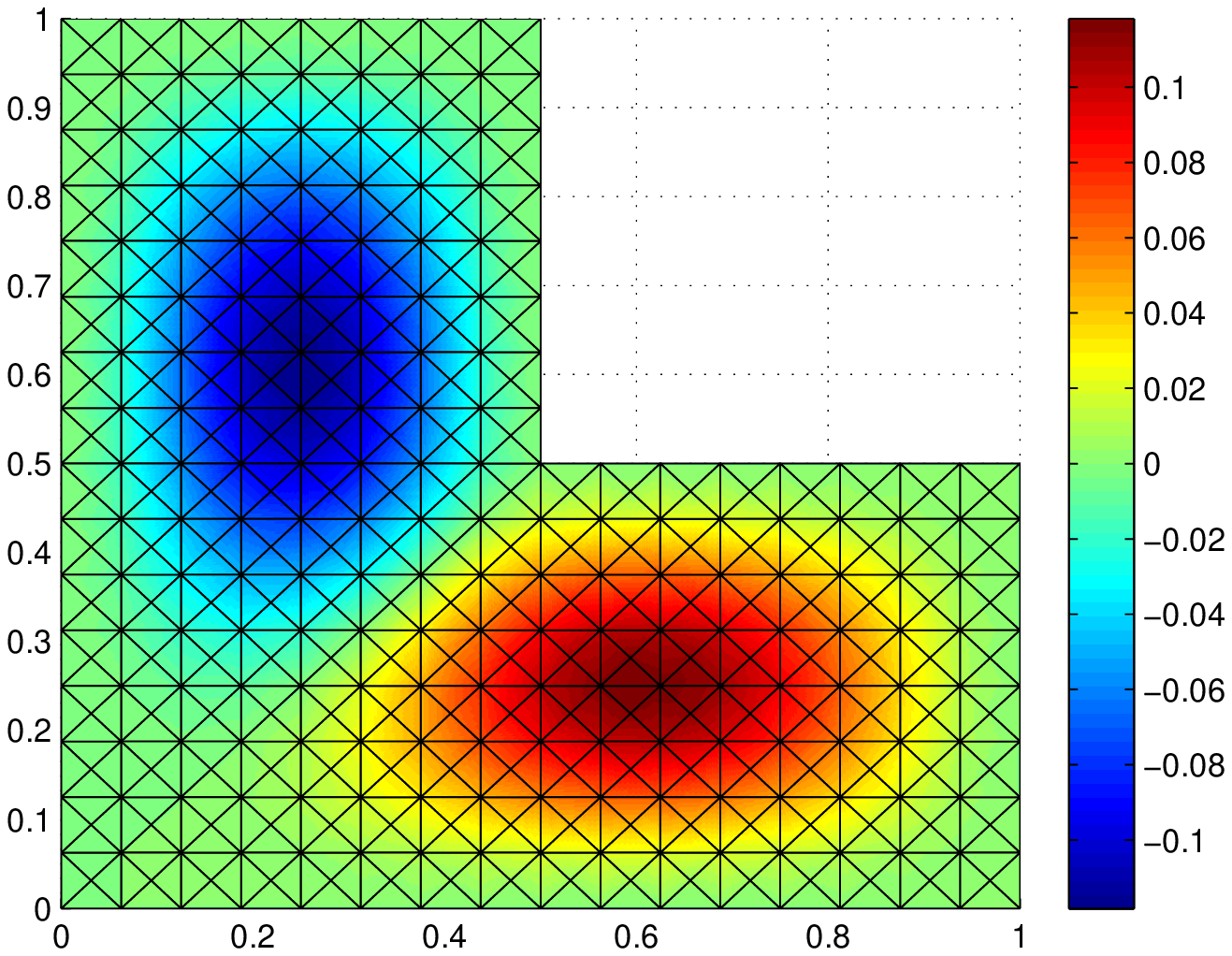}
\end{minipage}
\begin{minipage}{6.3cm}
\centering\includegraphics[height=6cm, width=6.9cm]{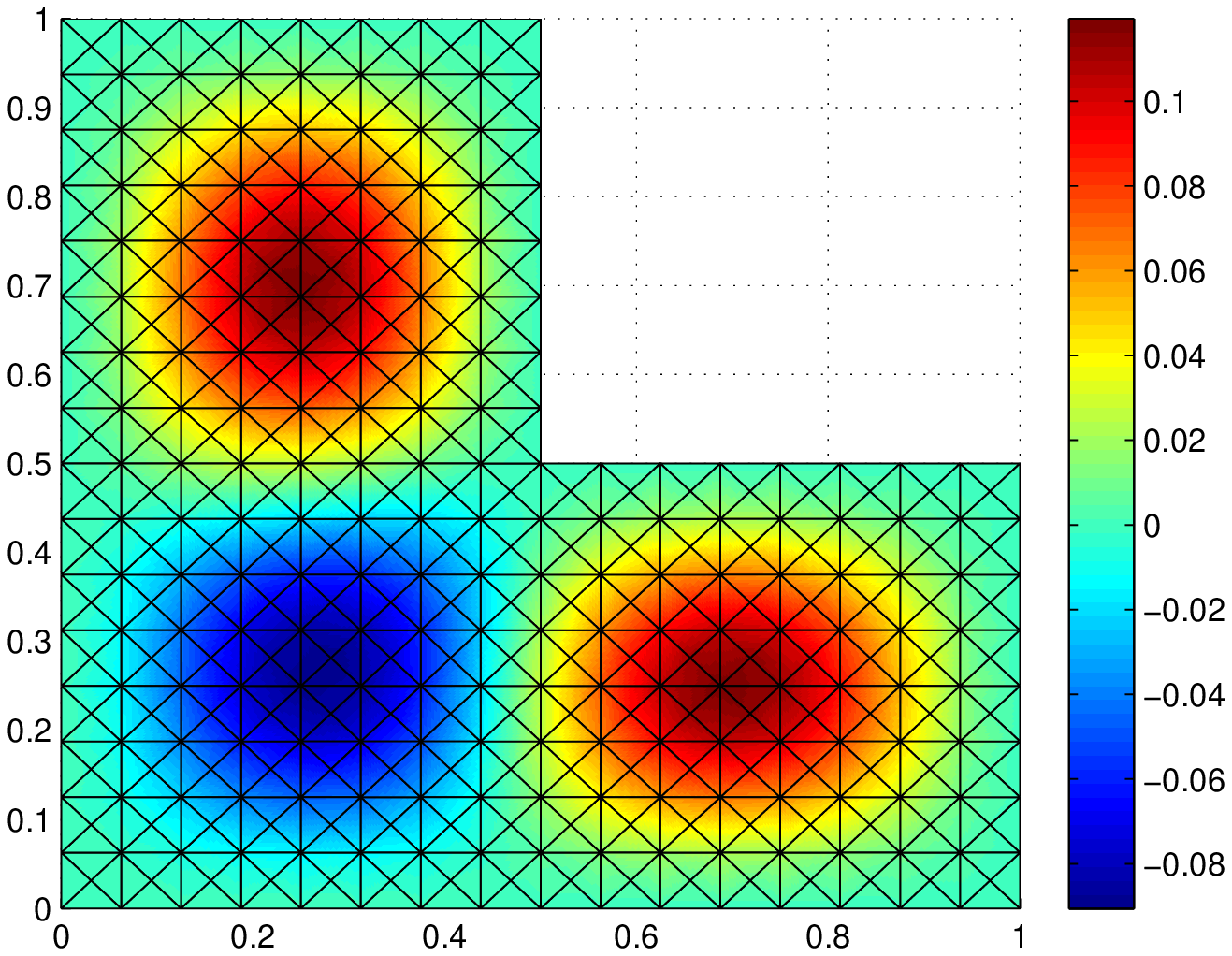}
\end{minipage}
\begin{minipage}{6.3cm}
\centering\includegraphics[height=6cm, width=6.9cm]{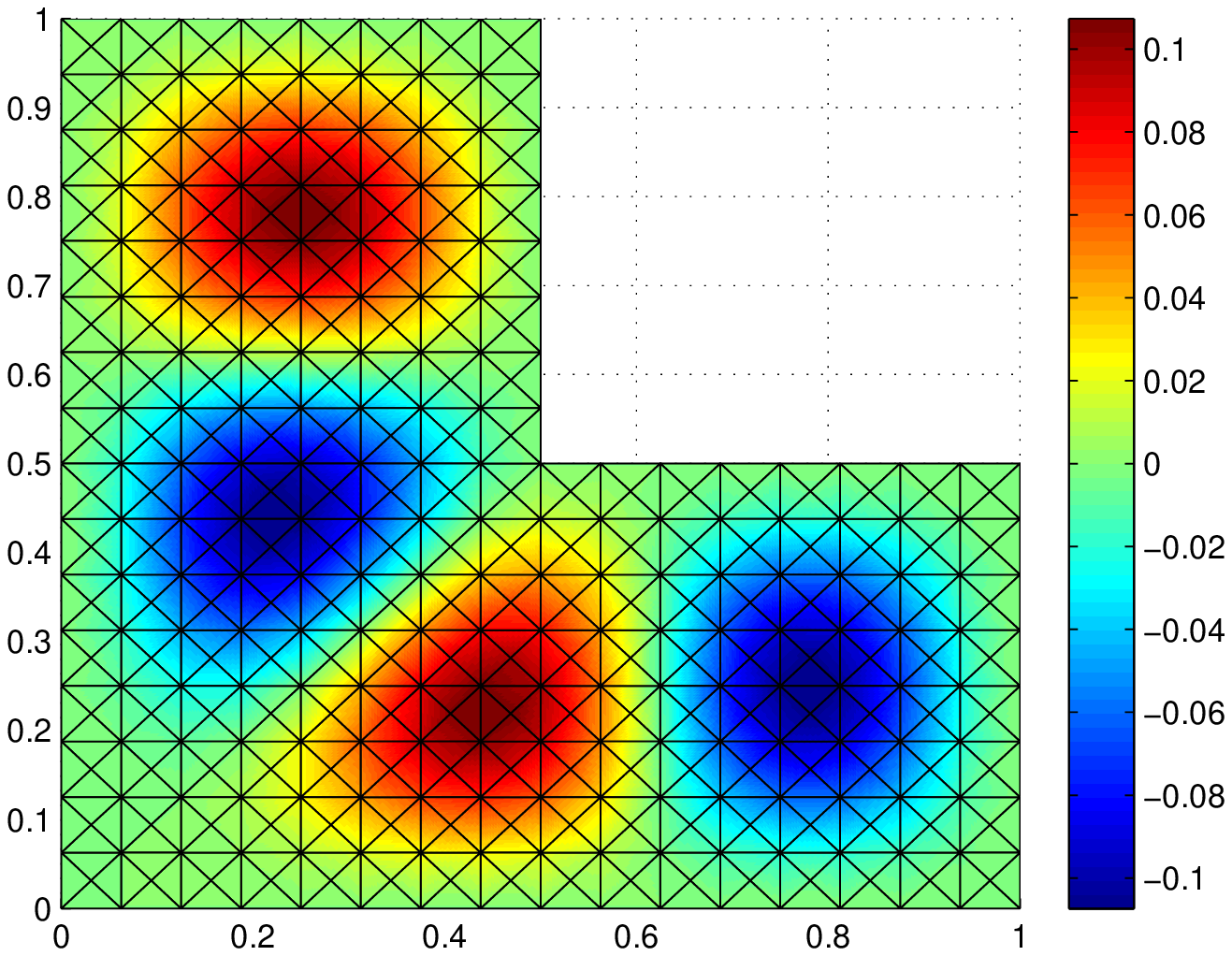}
\end{minipage}
\caption{Eigenfunctions of the plate problem with clamped boundary
condition associated with eigenvalues $\lambda_1$ (top left)
$\lambda_2$ (top right), $\lambda_3$ (bottom left)
and  $\lambda_4$ (bottom right).}
\label{FIG:VM37}
\end{center}
\end{figure}


Finally, Table~\ref{TAB:5} shows the four lowest vibration
frequencies computed with successively refined triangular meshes
for an L-shaped clamped-free plate.
The table includes orders of convergence, as well as accurate
values extrapolated by means of a least-squares fitting.
We observe from the results reported in Table~\ref{TAB:5}
that the order of convergence is
again quadratic in this case.


\begin{table}[ht]
\caption{Lowest vibration frequencies of an L-shaped clamped-free plate
computed on triangular meshes with the VEM method analyzed in this paper.}
\label{TAB:5}
\begin{center}
{\small\begin{tabular}{rrrrrrr}
\hline
 & $N=32$ & $N=64$ & $N=128$ & Order & Extrapolated \\
\hline
$\l_1$&   1198.2579&    1195.3003&    1194.4606 & 1.82& 1194.1302\\
$\l_2$&  4576.4950&   4556.9217&  4551.0233&  1.73& 4548.4764\\
$\l_3$&  6807.0921&  6785.8226&   6780.4745&     2.00&  6778.6710\\
$\l_4$&  15094.2896&  15019.3352&   14998.6457&     1.86& 14990.8077\\
\hline
\end{tabular}}
\end{center}
\end{table}



Finally, we show in Figure~\ref{FIG:VM36}
the eigenfunctions corresponding to the four lowest eigenvalues.


\begin{figure}[H]
\begin{center}
\begin{minipage}{6.3cm}
\centering\includegraphics[height=6cm, width=6.8cm]{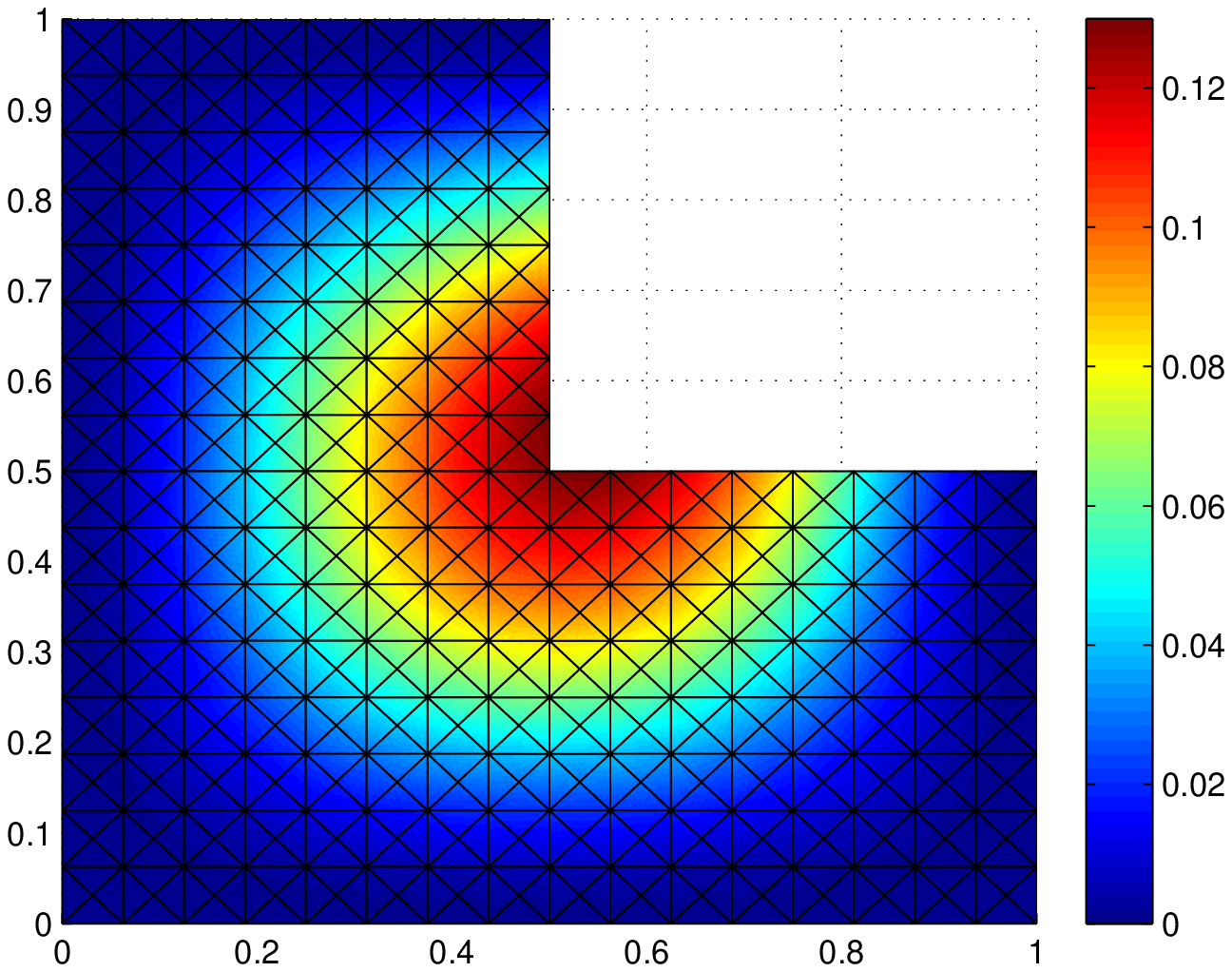}
\end{minipage}
\begin{minipage}{6.3cm}
\centering\includegraphics[height=6cm, width=6.8cm]{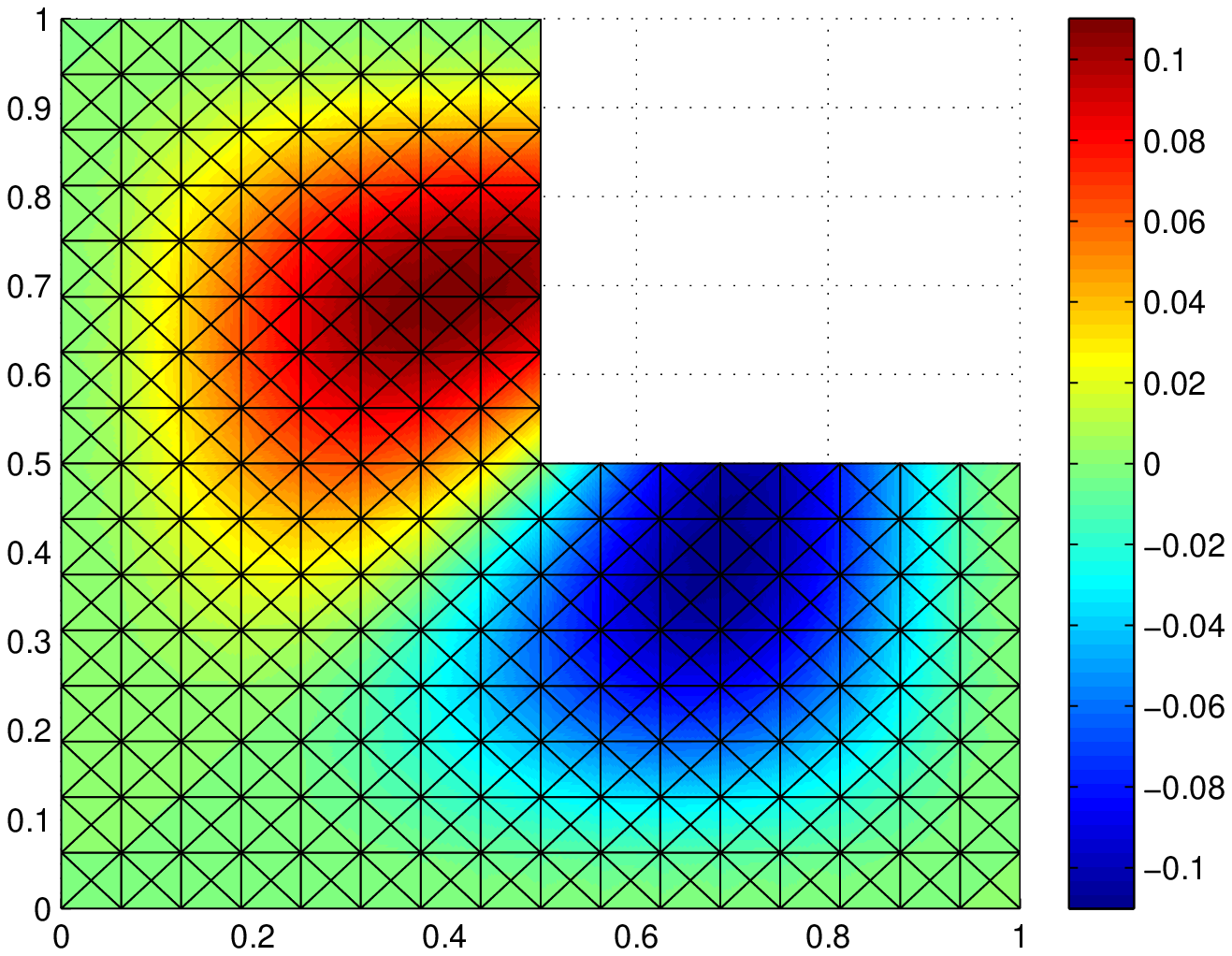}
\end{minipage}
\begin{minipage}{6.3cm}
\centering\includegraphics[height=6cm, width=6.8cm]{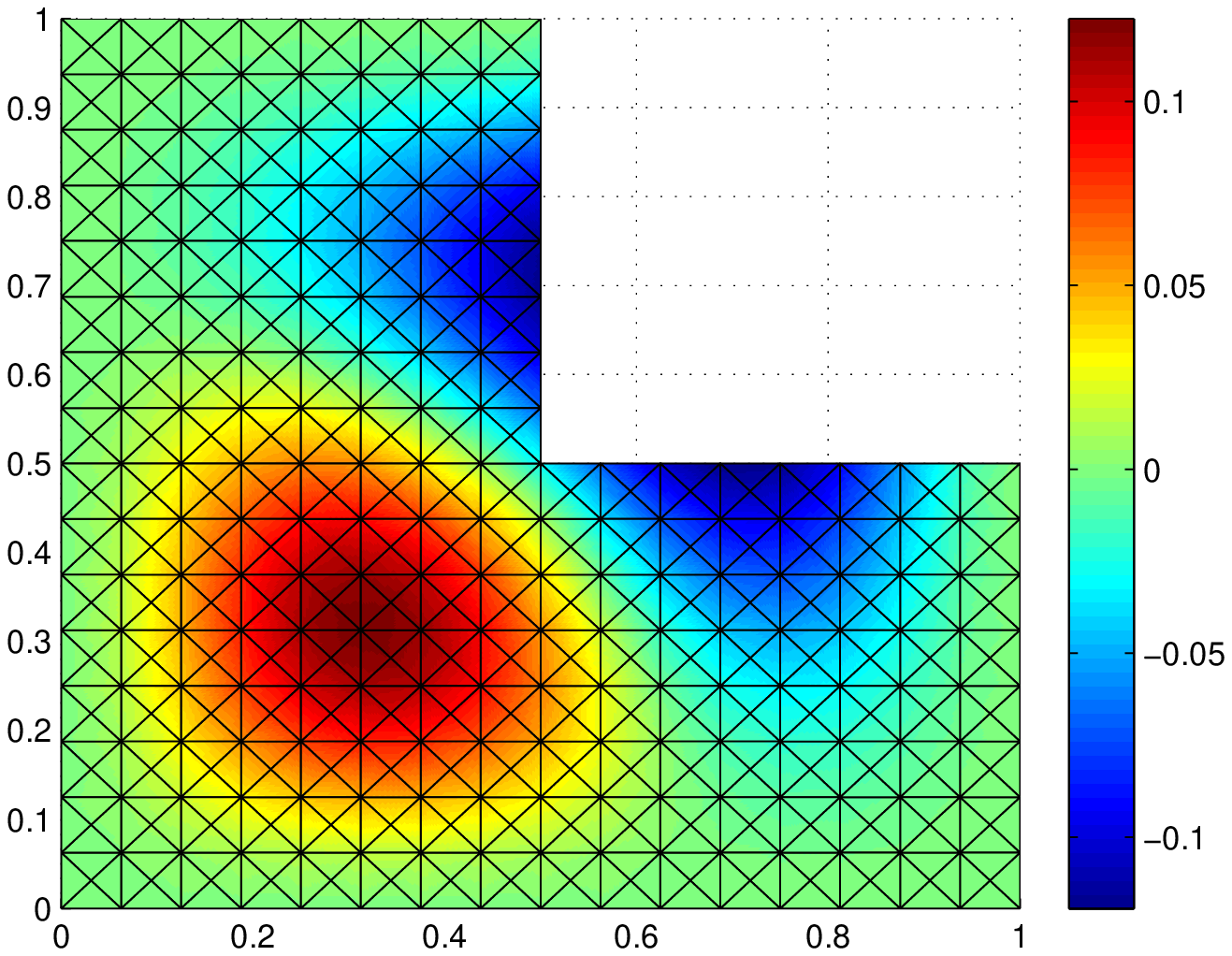}
\end{minipage}
\begin{minipage}{6.3cm}
\centering\includegraphics[height=6cm, width=6.8cm]{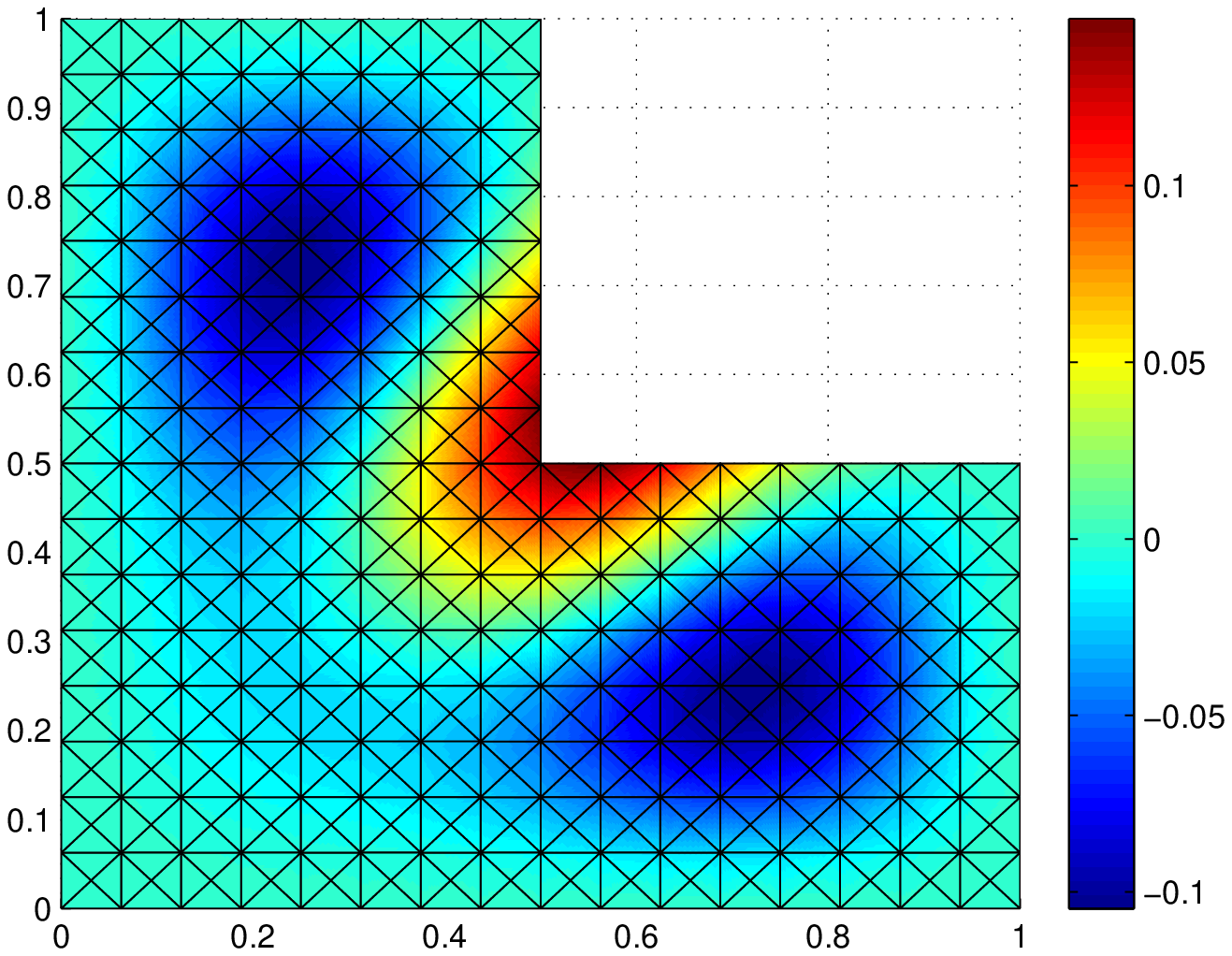}
\end{minipage}
\caption{Eigenfunctions of the plate problem with mixed boundary
condition associated with eigenvalues $\lambda_1$ (top left)
$\lambda_2$ (top right), $\lambda_3$ (bottom left)
and  $\lambda_4$ (bottom right).}
\label{FIG:VM36}
\end{center}
\end{figure}

\section{Conclusions}
\label{SEC:Conclusion}

The mathematical and numerical analysis for the vibration problem
of Kirchhoff-Love plates approximation
by virtual elements was addressed in this paper. The variational formulation
is written in terms of the transverse displacement
of the plate and a conforming $H^2(\Omega)$ discrete formulation
was proposed to numerically approximate the eigenvalue problem.
It is established that the resulting scheme provides a
correct spectral approximation and that the error estimates are
of the optimal order for the eigenfunctions and eigenvalues.
The proposed method is new on triangular meshes,
and in this case the computational cost is almost $3N_v$, where
$N_v$ denotes the number of vertices, thus providing
a very competitive alternative in comparison to other classical
techniques based on finite elements.
The theoretical results obtained were validated numerically.
Even though our theoretical analysis has been developed only
for clamped plates, additional examples have been considered
and we evidenced the results of the previous sections hold
true for more general boundary conditions as well. 


\section*{Acknowledgments}

The first author was partially supported by CONICYT (Chile) through
FONDECYT project 1140791 and by DIUBB through project 151408 GI/VC
Universidad del B\'io-B\'io, Chile.
The second author was partially supported by BASAL Project PFB 03, CMM,
Universidad de Chile, Chile.
%
The third author was partially supported by a CONICYT (Chile)
fellowship.


\bibliographystyle{amsplain}

\begin{thebibliography}{99}


\bibitem{AABMR13} 
\textsc{B. Ahmad, A. Alsaedi, F. Brezzi, L.~D. Marini and A. Russo},
\textit{Equivalent projectors for virtual element methods},
Comput. Math. Appl., {\bf 66}, (2013), pp.  376--391.



\bibitem{ALR05} 
\textsc{A.~B. Andreev, R.~D. Lazarov and M.~R. Racheva},
\textit{Postprocessing and higher order convergence of the mixed
finite element approximations of biharmonic eigenvalue problems},
J. Comput. Appl. Math., {\bf 182(2)}, (2005), pp. 333--349.


\bibitem{ABMVsinum14} 
\textsc{P.~F. Antonietti, L. Beir\~ao da Veiga, D. Mora and M. Verani},
\textit{A stream virtual element formulation of the Stokes problem on polygonal meshes},
SIAM J. Numer. Anal., {\bf 52(1)}, (2014), pp. 386--404.

\bibitem{ABSVsinum16} 
\textsc{P.~F. Antonietti, L. Beir\~ao da Veiga, S. Scacchi and M. Verani},
\textit{A $C^1$ virtual element method for the Cahn--Hilliard
equation with polygonal meshes},
SIAM J. Numer. Anal., {\bf 54(1)}, (2016), pp. 36--56.




\bibitem{BO} 
\textsc{I. Babu\v{s}ka and J. Osborn}, 
\textit{Eigenvalue problems}, 
in \textit{Handbook of Numerical Analysis}, Vol. II, 
P.G. Ciarlet and J.L. Lions, eds., 
North-Holland, Amsterdam, 1991, pp. 641--787.

 

\bibitem{BBCMMR2013} 
\textsc{L. Beir\~ao da Veiga, F. Brezzi, A. Cangiani,
G. Manzini, L.~D. Marini and A. Russo},
\textit{Basic principles of virtual element methods},
Math. Models Methods Appl. Sci., {\bf 23}, (2013), pp. 199--214.



\bibitem{BBMR2014} 
\textsc{L. Beir\~ao da Veiga, F. Brezzi, L. D. Marini and A. Russo},
\textit{The hitchhiker's guide to the virtual element method},
Math. Models Methods Appl. Sci., {\bf 24}, (2014), pp. 1541--1573.





\bibitem{BLRXX}
\textsc{L. Beir\~ao da Veiga, C. Lovadina and A. Russo},
\textit{Stability analysis for the virtual element method},
Preprint arXiv:1607.05988 [math.NA] (2016).


\bibitem{BLM2015} 
\textsc{L. Beir\~ao da Veiga, C. Lovadina and D. Mora},
\textit{A virtual element method for elastic and inelastic problems on polytope meshes},
Comput. Methods Appl. Mech. Engrg., {\bf 295}, (2015) pp. 327--346.



\bibitem{BM13} 
\textsc{L. Beir\~ao da Veiga and G. Manzini},
\textit{A virtual element method with arbitrary regularity},
IMA J. Numer. Anal., {\bf 34}, (2014), pp. 759--781.



\bibitem{BMR16} 
\textsc{L. Beir\~ao da Veiga, D. Mora and G. Rivera},
\textit{A virtual element method for Reissner-Mindlin plates},
CI$^2$MA preprint 2016-14, available from
http://www.ci2ma.udec.cl.


\bibitem{BMRR} 
\textsc{L. Beir\~ao da Veiga, D. Mora, G. Rivera and R. Rodr\'iguez},
\textit{A virtual element method for the acoustic vibration problem},
Numer. Math., DOI:10.1007/s00211-016-0855-5 (2016).








\bibitem{BBPS2014} 
\textsc{M.~F. Benedetto, S. Berrone, S. Pieraccini and S. Scial\`o},
\textit{The virtual element method for discrete fracture network simulations},
Comput. Methods Appl. Mech. Engrg., {\bf 280}, (2014), pp. 135--156.





\bibitem{Boffi} 
\textsc{D. Boffi},
\textit{Finite element approximation of eigenvalue problems},
Acta Numerica, 19 (2010), pp. 1--120.


\bibitem{BGG2012} 
\textsc{D. Boffi, F. Gardini and L. Gastaldi},
\textit{Some remarks on eigenvalue approximation by finite elements},
in Frontiers in Numerical Analysis--Durham 2010. 
Lect. Notes Comput. Sci. Eng., \textbf{85}, Springer, Heidelberg, pp. 1--77 (2012).

\bibitem{BMS2015} 
\textsc{S.~C. Brenner, P. Monk and J. Sun},
\textit{$C^0$ interior penalty Galerkin method
for biharmonic eigenvalue problems},
in Spectral and High Order Methods for Partial Differential Equations.
Lect. Notes Comput. Sci. Eng., \textbf{106}, Springer, Heidelberg, pp. 3--15 (2015).


\bibitem{BS-2008} 
\textsc{S.~C. Brenner and R.~L. Scott},
\textit{The Mathematical Theory of Finite Element Methods},
Springer, New York, 2008.





\bibitem{BM12} 
\textsc{F.~Brezzi and L.D.~Marini},
\textit{Virtual elements for plate bending problems},
Comput. Methods Appl. Mech. Engrg., {\bf 253}, (2012), pp. 455--462.

\bibitem{CG2016} 
\textsc{E. Caceres and G.N. Gatica},
\textit{A mixed virtual element method for the pseudostress-velocity
formulation of the Stokes problem},
IMA J. Numer. Anal., DOI:10.1093/imanum/drw002 (2016).



\bibitem{CGH14} 
\textsc{A. Cangiani, E.~H.  Georgoulis and P. Houston},
\textit{$hp$-version discontinuous Galerkin methods on polygonal and polyhedral meshes},
Math. Models Methods Appl. Sci., {\bf24(10)}, (2014), pp. 2009--2041.

\bibitem{CMS2016} 
\textsc{A. Cangiani, G. Manzini and O.~J. Sutton},
\textit{Conforming and nonconforming virtual element methods
for elliptic problems},
IMA J. Numer. Anal., DOI:10.1093/imanum/drw036 (2016).



\bibitem{Ca}
\textsc{C. Canuto},
\textit{Eigenvalue approximations by mixed methods},
RAIRO Anal. Num\'er., {\bf 12}, (1978), pp. 27--50.


\bibitem{CL} 
\textsc{C. Chinosi and L.D. Marini},
\textit{Virtual element method for fourth order problems: ￼$L^2$-estimates},
Comput. Math. Appl.,  {\bf 72(8)}, (2016), pp. 1959--1967.


\bibitem{ciarlet}
\textsc{P.~G. Ciarlet},
\textit{The Finite Element Method for Elliptic Problems},
SIAM, 2002.





\bibitem{DNR1}
\textsc{J. Descloux, N. Nassif, and J. Rappaz},
\textit{On spectral approximation. Part 1: The problem of convergence},
RAIRO Anal. Num\'er., 12 (1978), pp. 97--112.

\bibitem{DNR2}
\textsc{J. Descloux, N. Nassif, and J. Rappaz},
\textit{On spectral approximation. Part 2: Error estimates for the
Galerkin method},
RAIRO Anal. Num\'er., 12 (1978), pp. 113--119.



\bibitem{DPECMAME2015} 
\textsc{D. Di Pietro and A. Ern},
\textit{A hybrid high-order locking-free method
for linear elasticity on general meshes},
Comput. Methods Appl. Mech. Eng., {\bf283}, (2015), pp. 1--21.

\bibitem{DPECRAS2015} 
\textsc{D. Di Pietro and A. Ern},
\textit{Hybrid high-order methods for
variable-diffusion problems on general meshes},
C. R. Acad. Sci., Paris I, {\bf353}(1), (2015), pp. 31--34.






\bibitem{Paulino-VEM}
\textsc{A.~L. Gain, C. Talischi and G.H. Paulino},
\textit{On the virtual element method for three-dimensional
linear elasticity problems on arbitrary polyhedral meshes},
Comput. Methods Appl. Mech. Engrg., {\bf 282}, (2014), pp. 132--160.

\bibitem{GVXX}
\textsc{F. Gardini and G. Vacca},
\textit{Virtual element method for second order elliptic eigenvalue problems},
Preprint arXiv:1610.03675 [math.NA] (2016).




\bibitem{GR} 
\textsc{V. Girault and P.A. Raviart},
\textit{Finite Element Methods for Navier-Stokes Equations},
Springer-Verlag, Berlin, 1986.


\bibitem{G} 
\textsc{P. Grisvard}, 
\textit{Elliptic Problems in Non-Smooth Domains}, 
Pitman, Boston, 1985.












\bibitem{MORR}
\textsc{B. Mercier, J. Osborn, J. Rappaz, and P. A. Raviart},
\textit{Eigenvalue approximation by mixed and hybrid methods}, 
Math. Comp., {\bf 36}, (1981), pp. 427--453.


\bibitem{MRR2015} 
\textsc{D. Mora, G. Rivera and R. Rodr\'iguez},
\textit{A virtual element method for the Steklov eigenvalue problem},
Math. Models Methods Appl. Sci., {\bf 25(8)}, (2015), pp. 1421--1445.
 

\bibitem{MoRo2009} 
\textsc{D. Mora and R. Rodr\'iguez},
\textit{A piecewise linear finite element method for the buckling
and the vibration problems of thin plates},
Math. Comp., {\bf 78(268)}, (2009), pp 1891--1917.
  
 
\bibitem{PG2015} 
\textsc{G.~H. Paulino and A.~L. Gain},
\textit{Bridging art and engineering using Escher-based virtual elements},
Struct. Multidiscip. Optim., {\bf 51(4)}, (2015), pp. 867--883. 
 

\bibitem{PPR15} 
\textsc{I. Perugia, P. Pietra and A. Russo},
\textit{A plane wave virtual element method for the Helmholtz problem},
ESAIM Math. Model. Numer. Anal., {\bf 50(3)}, (2016), pp. 783--808.

\bibitem{Ra}
\textsc{R. Rannacher},
\textit{Nonconforming finite element methods for eigenvalue problems in
linear plate theory},
Numer. Math., {\bf 33}, (1979), pp. 23--42.









\bibitem{ST04} 
\textsc{N. Sukumar and A. Tabarraei},
\textit{Conforming polygonal finite elements},
Internat. J. Numer. Methods Engrg., {\bf 61}, (2004), pp. 2045--2066.

\bibitem{TPPM10}
\textsc{C. Talischi, G.~H. Paulino, A. Pereira and I.~F.~M. Menezes},
\textit{Polygonal finite elements for topology optimization: A unifying paradigm},
Internat. J. Numer. Methods Engrg., {\bf 82(6)}, (2010), pp. 671--698.




\bibitem{vacca1}
\textsc{G. Vacca},
\textit{Virtual element methods for hyperbolic problems on polygonal meshes},
Comput. Math. Appl., DOI:10.1016/j.camwa.2016.04.029 (2016).


\bibitem{vacca2}
\textsc{G. Vacca and L.. Beir\~ao da Veiga},
\textit{Virtual element methods for parabolic problems on polygonal
meshes}, Numer. Methods Partial Differential Equations,
{\bf 31(6)}, (2015), pp. 2110--2134.

\bibitem{WRR}
\textsc{P. Wriggers, W.~T. Rust and B.~D. Reddy},
\textit{A virtual element method for contact},
Comput. Mech., {\bf 58(6)}, (2016), pp. 1039--1050.






\end{thebibliography}

\end{document}